\def\theequation{\thesection.\arabic{equation}}
\newcommand{\eqnum}{\refstepcounter{equation}\textup{\tagform@{\theequation}}}
\newcounter{copy}
\renewcommand{\thecopy}{\ifnum0=\c@section\arabic{copy}\else\thesection.\arabic{copy}'\fi}
\theoremstyle{definition}
\newtheorem{defn}[equation]{Definition}
\theoremstyle{plain}
\newtheorem{thm}[equation]{Theorem}
\newtheorem{prp}[equation]{Proposition}
\newtheorem{lem}[equation]{Lemma}
\newtheorem{cor}[equation]{Corollary}
\theoremstyle{remark}
\newtheorem{rmk}[equation]{Remark}
\newcommand{\bB}{\mathbb{B}}
\newcommand{\bC}{\mathbb{C}}
\newcommand{\bF}{\mathbb{F}}
\newcommand{\bK}{\mathbb{K}}
\newcommand{\bM}{\mathbb{M}}
\newcommand{\bN}{\mathbb{N}}
\newcommand{\bT}{\mathbb{T}}
\newcommand{\cA}{\mathcal{A}}
\newcommand{\cC}{\mathcal{C}}
\newcommand{\cG}{\mathcal{G}}
\newcommand{\cH}{\mathcal{H}}
\newcommand{\cP}{\mathcal{P}}
\newcommand{\cR}{\mathcal{R}}
\newcommand{\cU}{\mathcal{U}}
\newcommand{\cV}{\mathcal{V}}
\newcommand{\cW}{\mathcal{W}}
\newcommand{\fe}{\mathfrak{e}}
\newcommand{\fu}{\mathfrak{u}}
\newcommand{\fv}{\mathfrak{v}}
\newcommand{\be}{\mathbf{e}}
\newcommand{\bu}{\mathbf{u}}
\newcommand{\bv}{\mathbf{v}}
\newcommand{\bw}{\mathbf{w}}
\newcommand{\id}{\mathrm{id}}
\DeclareMathOperator{\K}{\mathrm{K}}
\DeclareMathOperator{\End}{\mathrm{End}}
\DeclareMathOperator{\Hom}{\mathrm{Hom}}
\DeclareMathOperator{\Ad}{\mathrm{Ad}}
\DeclareMathOperator{\diag}{\mathrm{diag}}
\newcommand{\Bdl}{\mathrm{Bdl}}
\newcommand{\qRep}{\mathrm{qRep}}
\title[Almost flat relative vector bundles]{Almost flat relative vector bundles and the almost monodromy correspondence}
\author{Yosuke KUBOTA}
\address{iTHEMS Research Program, RIKEN, 2-1 Hirosawa, Wako, Saitama 351-0198, Japan}
\email{yosuke.kubota@riken.jp}
\date{}
\subjclass[2010]{Primary 19K56; Secondary 19K35, 46L80, 58J32.}
\keywords{Almost flat bundle, quasi-representation, almost monodromy correspondence.}
\begin{document}
\begin{abstract}
In this paper we introduce the notion of almost flatness for (stably) relative bundles on a pair of topological spaces and investigate basic properties of it. First, we show that almost flatness of topological and smooth sense are equivalent. This provides a construction of an almost flat stably relative bundle by using the enlargeability of manifolds. Second, we show the almost monodromy correspondence, that is, a correspondence between almost flat (stably) relative bundles and (stably) relative quasi-representations of the fundamental group.  
\end{abstract}

\maketitle
\tableofcontents

\section{Introduction}
The notion of \emph{almost flat bundle} provides a geometric perspective on the higher index theory.
It was introduced by Connes--Gromov--Moscovici~\cite{MR1042862} for the purpose of proving the Novikov conjecture for a large class of groups. 
The original definition is given in terms of curvature of vector bundles, and hence requires a smooth manifold structure for the base space. Another definition of almost flat bundle is given in \cite[Section 2]{MR2179351}, which make sense for bundles on simplicial complexes. The equivalence of these two definitions is studied in \cite{mathGT160707820}.

Its central concept is the \emph{almost monodromy correspondence}, that is, the correspondence between almost flat bundles and quasi-representations of the fundamental group. 
This almost one-to-one correspondence has been studied in various contexts such as \cite{MR1065438,MR1865957,MR3275029,mathOA150306170}. 
It plays an important role in the work of Hanke--Schick \cite{MR2259056,MR2353861}, which bridges the C*-algebraic and geometric approaches to the Novikov conjecture and the Gromov--Lawson--Rosenberg conjecture.

The aim of paper is to consider a similar problem for manifolds with boundary. For a pair of topological spaces $(X,Y)$, we introduce the notion of almost flatness for representatives of the relative $\K^0$-group $\K^0(X,Y)$. Our definition is inspired from the one suggested in \cite{MR1389019} and \cite{MR3101798}, but slightly different (a major difference is to treat stably relative bundles instead of relative bundles). 

There are two main conclusions of this paper. The first, Theorem \ref{thm:equiv}, is the comparison of topological and smooth almost flat bundles, a relative analogue of the result of \cite{mathGT160707820}.
This theorem has an application to the index theory of (area-)enlargeable manifolds.
Gromov--Lawson~\cite{MR720933} and Hanke--Schick~\cite{MR2353861} constructs an almost flat bundle of Hilbert C*-modules with non-trivial index on an enlargeable spin manifold. 
In this paper we consider a relative counterpart of this idea for a Riemannian manifold $M$ with boundary $\partial M$ such that the complete Riemannian manifold $M_\infty := M \sqcup _{\partial M} \partial M \times [0,\infty)$ is area-enlargeable. 
We construct a stably relative bundle of Hilbert C*-modules on $(M,\partial M)$ with non-trivial index pairing by applying the construction of Gromov--Lawson and Hanke--Schick (Theorem \ref{thm:enlarge}).

The second, Theorem \ref{prp:rel1to1}, is the almost monodromy correspondence in the relative setting. For a pair $(\Gamma, \Lambda)$ of discrete groups with a homomorphism $\phi \colon \Lambda \to \Gamma$, we introduce the notion of (stably) relative quasi-representation as two quasi-representations on $\Gamma$ whose composition with $\phi $ are stably unitary equivalent up to small $\varepsilon>0$. 
Following the work of Carri\'{o}n and Dadarlat \cite{mathOA150306170}, we establish an almost monodromy correspondence between almost flat relative bundles and relative quasi-representations of the pair of fundamental groups.
This correspondence plays an important role in the paper \cite{Kubota2}, which bridges the index pairing with almost flat stably relative bundles and Chang--Weinberger--Yu relative higher index. 
In particular, the almost flat stably relative bundle constructed in Theorem \ref{thm:enlarge} is used in \cite[Section 3.2]{Kubota2} to show the non-vanishing of the Chang--Weinberger--Yu relative higher index through the almost monodromy correspondence.

In this paper we consider not only relative vector bundles (or Karoubi triples) but also its refinement, \emph{stably relative vector bundles}, as a representative of the relative $\K^0$-group and sometimes compare them. A stably relative vector bundle on $(X,Y)$ is a pair of vector bundles on $X$ identified by a stable unitary isomorphism on $Y$ (for a more precise definition, see Definition \ref{defn:stb}). There are two reasons to consider stably relative bundles. 
The first is related with the enlargeable manifolds. What is obtained from the enlargeability of $M_\infty$ is not a relative but a stably relative bundle. 
The second is related with the almost monodromy correspondence. As is pointed out in Remark \ref{rmk:unstable}, relative quasi-representation of $(\Gamma , \Lambda )$ is the same thing as that of $(\Gamma, \phi(\Lambda))$. That is, relative quasi-representation does not capture any information of $\ker \phi $.

This paper is organized as follows. In Section \ref{section:2}, we introduce the notion of stably relative bundle and show that it represents an element of the relative $\K^0$-group. In Section \ref{section:3}, we introduce the definition of the almost-flatness for stably relative bundles. 
In Section \ref{section:4}, we compare the topological and smooth almost flatness and applies this to enlargeable manifolds. In Section \ref{section:4.5}, we apply the result of Section \ref{section:4} to a construction of an almost flat sequence of stably relative bundles on a enlargeable manifold with boundary.
In Section \ref{section:5}, we define the relative analogue of group quasi-representations and shows the almost monodromy correspondence. 

Throughout this paper, we treat bundles of (finitely generated projective) Hilbert C*-modules. This general treatment is useful for generalizing Hanke--Schick theorem for a generalized notion of enlargeability introduced in \cite{MR2353861} by using infinite covers.

\subsection*{Acknowledgment}
The author would like to thank Yoshiyasu Fukumoto for introducing him to this topic. This work was supported by RIKEN iTHEMS Program.

\section{Relative and stably relative bundles}\label{section:2}
In this section we introduce the definition of stably relative vector bundles and bundles of Hilbert C*-modules as a representative of relative $\K^0$-group.  
Throughout this section $A$ denotes a unital C*-algebra and $P, Q$ denote finitely generated projective Hilbert $A$-modules.

Let $(X,Y)$ be a pair of compact Hausdorff spaces. The relative K-group $\K^0(X,Y)$ is defined as the Grothendieck construction of the monoid of equivalence classes of triples $(E_1,E_2,u)$, where $E_1$ and $E_2$ are vector bundles on $X$ and $u$ is an isomorphism $E_1|_Y \to E_2|_Y$ (\cite[Chapter II, 2.29]{MR2458205}). In this paper we call such triple a \emph{relative vector bundle}. Now we modify this description of the group $\K^0(X,Y)$. For a unital C*-algebra $A$, we define the relative $\K^0$-group with coefficient in $A$ by $\K^0(X,Y;A) :=\K_0(C_0(X^\circ) \otimes A)$, where $X^\circ$ denotes the interior $X \setminus Y$.
\begin{defn}\label{defn:stb}
A \emph{stably relative bundle} on $(X,Y)$ with the typical fiber $(P,Q)$ is a quadruple $(E_1,E_2, E_0, u)$, where $E_1$ and $E_2$ are $P$-bundles on $X$, $E_0$ is a $Q$-bundle on $Y$ and $u$ is a unitary bundle isomorphism $E_1|_Y \oplus E_0 \to E_2|_Y \oplus E_0$. 
\end{defn}
A stably relative bundle of Hilbert $\bC$-modules with the typical fiber $(\bC^n, \bC^m)$ is simply called a stably relative vector bundle of rank $(n,m)$.

We say that stably relative bundles $(E_1 ,E_2 , E_0 , u)$ and $(E_1', E_2' , E_0' , u')$ are isomorphic if there are unitary isomorphisms $U_i \colon E_i \to E_i'$ for $i=0,1,2$ such that $\diag (U_2|_Y , U_0)u=u'\diag (U_1|_Y,U_0)$. 
Let $\Bdl_s(X,Y ; A)$ denote the set of isomorphism classes of stably relative bundles of finitely generated projective Hilbert $A$-modules. We consider the equivalence relation $\sim$ on $\Bdl_s(X,Y ; A)$ generated by 
\begin{itemize}\label{item:equiv}
\item $(E_1,E_2,E_0,u) \sim (E_1',E_2',E_0',u')$ if they are homotopic, that is, there is a stably relative vector bundle $(\tilde{E}_1, \tilde{E}_2, \tilde{E}_0, \tilde{u})$ on $(X[0,1] , Y[0,1])$ whose restriction to $(X \times \{ 0 \} , Y \times \{ 0 \})$ and $(X \times \{ 1 \} , Y \times \{ 1 \})$ are isomorphic to $(E_1,E_2,E_0,u) $ and $(E_1',E_2',E_0',u')$ respectively,  
\item $(E_1 , E_2, E_0 , u) \sim (0,0,E_1|_Y \oplus E_0,(v|_Y \oplus 1_{E_0})^*u)$ if $v$ is a unitary isomorphism from $E_1$ to $E_2$, and
\item $(0,0,E_0, 1_{E_0}) \sim 0$.
\end{itemize}
The summation $[E_1, E_2, E_0 , u] + [E_1', E_2', E_0' ,u']:=[E_1 \oplus E_1' , E_2 \oplus E_2', E_0 \oplus E_0' , u \oplus u'  ]$ makes the set $\Bdl_s(X,Y; A)/\sim$ into an abelian monoid. Moreover, $[E_1, E_2, E_0, u]$ has the inverse $[E_2, E_1, E_0 , u^*]$.

\begin{lem}
The group $\Bdl_s (X,Y ; A)/\sim $ is isomorphic to the relative $\K^0$-group $\K^0(X,Y ; A)$.
\end{lem}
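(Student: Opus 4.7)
The strategy is to construct mutually inverse homomorphisms
\[\Phi\colon \Bdl_s(X,Y;A)/{\sim}\;\rightleftarrows\;\K^0(X,Y;A)\colon\Psi\]
using the Karoubi triple model of relative $\K$-theory: $\K^0(X,Y;A)=\K_0(C_0(X^\circ)\otimes A)$ is the Grothendieck group of triples $(E_1,E_2,u)$ of finitely generated projective Hilbert $A$-module bundles on $X$ equipped with a unitary $u\colon E_1|_Y\xrightarrow{\cong} E_2|_Y$, modulo homotopy and the relation $(E,E',u)\sim 0$ when $u$ extends to a global unitary on $X$.

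I set $\Psi[E_1,E_2,u]:=[E_1,E_2,0,u]$; well-definedness follows because if $u$ extends globally to a unitary $v\colon E_1\to E_2$, then by (b) and (c) in sequence,
\[(E_1,E_2,0,u)\sim(0,0,E_1|_Y,(v|_Y)^\ast u)=(0,0,E_1|_Y,1_{E_1|_Y})\sim 0.\]
For $\Phi$, given $(E_1,E_2,E_0,u)$, use finite generation and projectivity of $E_0$ over $C(Y)\otimes A$ to pick $F$ on $Y$ together with a unitary $\tau\colon Y\times A^n\xrightarrow{\cong}E_0\oplus F$. Set $T:=X\times A^n$ and
\[u':=(1_{E_2|_Y}\oplus\tau^{-1})\circ(u\oplus 1_F)\circ(1_{E_1|_Y}\oplus\tau)\colon (E_1\oplus T)|_Y\xrightarrow{\cong}(E_2\oplus T)|_Y,\]
and define $\Phi(E_1,E_2,E_0,u):=[E_1\oplus T,E_2\oplus T,u']$. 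A standard stabilization argument shows independence of $(n,F,\tau)$. Compatibility with (b) reduces, via the six-term exact sequence of $0\to C_0(X^\circ)\otimes A\to C(X)\otimes A\to C(Y)\otimes A\to 0$, to the vanishing of the boundary map on the image of $[v|_Y]\in\K_1(C(Y)\otimes A)$; compatibility with (c) and homotopy is immediate.

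The identity $\Phi\circ\Psi=\id$ is trivial (take $F=0$, $n=0$). For $\Psi\circ\Phi=\id$ the goal is to prove $(E_1,E_2,E_0,u)\sim(E_1\oplus T,E_2\oplus T,0,u')$, which I propose to establish via the zigzag
\begin{align*}
(E_1,E_2,E_0,u)
&\sim (E_1\oplus T,E_2\oplus T,E_0,u\oplus 1_{T|_Y})\\
&\sim (E_1\oplus T,E_2\oplus T,E_0\oplus F,u\oplus 1_{T|_Y}\oplus 1_F)\\
&\cong (E_1\oplus T,E_2\oplus T,Y\times A^n,W)\\
&\sim (E_1\oplus T,E_2\oplus T,Y\times A^n,u'\oplus 1_{Y\times A^n})\\
&\sim (E_1\oplus T,E_2\oplus T,0,u').
\end{align*}
The first step adds the null element $(T,T,0,1_{T|_Y})\sim 0$ (using (b) with $v=1_T$ and (c)); the second adds $(0,0,F,1_F)\sim 0$ by (c); the third is the isomorphism of stably relative bundles induced by $\tau^{-1}\colon E_0\oplus F\to Y\times A^n$; and the last uses $(0,0,Y\times A^n,1_{Y\times A^n})\sim 0$ by (c). A direct matrix computation shows that the two middle unitaries $W$ and $u'\oplus 1_{Y\times A^n}$ on $E_1|_Y\oplus T|_Y\oplus Y\times A^n$ differ only by conjugation by $\id_{E_1|_Y}\oplus S$, where $S$ is the permutation matrix swapping the two trivial rank-$n$ summands $T|_Y$ and $Y\times A^n$.

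\textbf{Main obstacle.} The crux is the middle homotopy step. Because $T|_Y$ sits inside the $X$-bundle part $(E_1\oplus T)|_Y$ while $Y\times A^n$ plays the role of the $Y$-only bundle $E_0$, the swap $S$ is \emph{not} an isomorphism of stably relative bundles. The key observation resolving this is that both summands are trivial rank-$n$ Hilbert $A$-module bundles on $Y$, so $S$ lies in the complex-coefficient unitary group $U_{2n}(\bC)\subset U_{2n}(C(Y)\otimes A)$; the former is path-connected, so conjugation by a rotation path from $\id$ to $\id_{E_1|_Y}\oplus S$ produces a continuous path of unitaries on $Y$, providing precisely the datum of a homotopy of stably relative bundles on $(X\times[0,1],Y\times[0,1])$ with constant bundle data. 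The remaining abelian group structure on $\Bdl_s/{\sim}$, with inverse $[E_2,E_1,E_0,u^\ast]$ obtained from relation (b) applied to the global swap $E_1\oplus E_2\xrightarrow{\cong}E_2\oplus E_1$ together with $\K_1$-triviality of $u\oplus u^\ast$, is formal, and ensures that $\Phi$ and $\Psi$ are mutually inverse group isomorphisms.
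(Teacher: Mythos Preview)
Your argument is essentially correct but takes a genuinely different route from the paper. You build explicit mutual inverses $\Phi$ and $\Psi$ between the Karoubi-triple model of $\K^0(X,Y;A)$ and $\Bdl_s(X,Y;A)/{\sim}$, with the rotation trick for the permutation $S$ being the main technical point; this is a valid hands-on construction, and the zigzag and homotopy step check out. The paper instead constructs only a single map $\kappa\colon \K^0(X,Y;A)\to\Bdl_s(X,Y;A)/{\sim}$ (essentially your $\Psi$), observes that the relations $\sim$ were chosen precisely so that $\bar\partial[u]=[0,0,A_Y^n,u]$ and $\bar j_*[E_1,E_2,E_0,u]=[E_1]-[E_2]$ fit into a five-term exact sequence matching the $\K$-theory long exact sequence of the pair, and invokes the five lemma. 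This avoids ever defining or checking well-definedness of an inverse map. Your approach has the advantage of giving an explicit $\Phi$, but the price is the bookkeeping you partly defer: independence of $(n,F,\tau)$ and especially compatibility of $\Phi$ with relation (b) are only sketched. (Your phrasing that (b) ``reduces to the vanishing of $\partial[v|_Y]$'' is a little loose---after trivializing both sides and using the global isomorphism $v$, the two Karoubi unitaries in fact agree up to conjugation by a permutation, so the difference is already zero in $\K_1$; the six-term sequence is not really needed here.) Either way the argument can be completed, but the paper's five-lemma proof is considerably shorter precisely because the relations generating $\sim$ were reverse-engineered to make the diagram commute and the row exact.
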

\begin{proof}
In the proof, we write as $\bar{\K}^0(X,Y;A):=\Bdl _s(X,Y;A)/\sim $. Let $\rho \colon (C_0(X^\circ ) \otimes A)^+ \to \bC$ denote the quotient.
We define the map $\kappa \colon \K^0(X,Y;A) \to \bar{\K}^0(X,Y,A)$ by  
\[ \kappa ([p]-[1_n])= [p(A^N_X), A^n_X , 0 , 1_n]\]
for a projection $p \in \bM_N((C_0(X) \otimes A)^+)$ with $ \rho (p)=1_n$. 

For a compact space $X$, let $\K^*(X;A):=\K_*(C(X) \otimes A)$. Let $i^* \colon C(X) \to C(Y)$ denote the restriction and let $j \colon C_0(X^\circ) \to C(X)$ denote the inclusion. Consider the homomorphisms
\begin{align*}
\bar{\partial} \colon &\K^1(Y; A) \to \bar{\K}^0(X,Y;A), && \bar{\partial} [u] = [0, 0, A_Y^n, u], \\ 
\bar{j}_* \colon &\bar{\K}^0(X,Y;A) \to \K^0(X; A), && \bar{j}_*[E_1, E_2, E_0, u] = [E_1] - [E_2].
\end{align*}
Actually, the equivalence relation $\sim$ is defined in such a way that $\bar{\partial } $ and $\bar{j}$ are well-defined and the second row of the commutative diagram 
\[ \xymatrix{
\K^1(X; A) \ar[r]^{i^*} \ar@{=}[d] & \K^1(Y;A) \ar[r]^\partial \ar@{=}[d] & \K^0(X,Y;A) \ar[r]^{j_*} \ar[d]^\kappa  & \K^0(Y;A) \ar[r]^{i^*} \ar@{=}[d] & \K^0(X;A)  \ar@{=}[d] \\
\K^1(X; A) \ar[r]^{i^*} & \K^1(Y;A) \ar[r]^{\bar{\partial}} & \bar{\K}^0(X,Y;A) \ar[r]^{\bar{j}_*} & \K^0(X;A) \ar[r]^{i^*} & \K^0(Y;A) 
}\] 
is exact (for the exactness at $\K^1(Y;A)$,  note that $[0,0,A_Y^n, 1]=[A_X^n, A_X^n, 0 , 1] = [0, 0, A_Y^n, u]$ if $u \in \mathrm{U}(C(Y) \otimes A \otimes  \bM_n)$ is extended to a unitary in $C(X) \otimes A \otimes \bM_n$). Now the lemma follows from the five lemma.
\end{proof}

\section{Almost flatness for (stably) relative bundles}\label{section:3}
In this section we introduce the notion of $\varepsilon$-flatness for stably relative bundles of Hilbert $A$-modules.
Let us recall the definition of almost flat bundle on a topological space introduced in \cite{MR2179351}.  
\begin{defn}
Let $X$ be a locally compact space with a finite open cover $\cU:=\{ U_\mu \}_{\mu \in I}$. For a finitely generated Hilbert $A$-module $P$, a $\mathrm{U}(P)$-valued \v{C}ech $1$-cocycle $\bv=\{ v_{\mu \nu}\}_{\mu,\nu \in I}$ on $\cU$ is an \emph{$(\varepsilon , \cU)$-flat bundle} on $X$ with the typical fiber $P$ if $\|  v_{\mu\nu}(x)-v_{\mu\nu}(y)\| <\varepsilon$ for any $x,y \in U_{\mu\nu}:=U_\mu \cap U_\nu$.
\end{defn}
We write $\Bdl_P^{\varepsilon , \cU}(X)$ for the set of $(\varepsilon , \cU)$-flat bundles with the typical fiber $P$. For $\bv \in \Bdl_P^{\varepsilon , \cU}(X)$, we write $E_\bv$ for the underlying $P$-bundle.
\begin{rmk}\label{rmk:bundle}
For the latter use we realize the bundle $E_\bv$ as a subbundle of the trivial bundle $X \times P^n$. Let $ \{ \eta_\mu \}_{\mu \in I}$ be a family of positive continuous functions on $X$ such that $\sum_{\mu \in I} \eta_\mu ^2 =1 $ and let $e_{\mu \nu} \in \bM_{I}$ denotes the matrix element, i.e., $e_{\mu \nu} e_\sigma = \delta_{\nu , \sigma} e_\mu$ where $e _\mu$ is the standard basis of $\bC^I \cong \Hom (\bC, \bC^I)$. Let
\begin{align*}
p_{\bv}(x)&:= \sum_{\mu , \nu} \eta _\mu(x) \eta_\nu(x) v_{\mu \nu}(x) \otimes e_{\mu\nu} \in C(X) \otimes \bB(P) \otimes \bM_I, \\
\psi_\mu^\bv (x) &:= \sum_{\nu } \eta_\nu (x) v_{\nu \mu} (x)  \otimes e_\nu \in C_b(U_\mu) \otimes \bB(P) \otimes \bC^I.
\end{align*}
Here we regard $\psi_\mu^\bv(x)$ as a bounded operator between Hilbert $A$-modules $ P $ and $P \otimes \bC^I$ and consider its adjoint $\psi _\mu^\bv (x)^*=\sum \eta_\nu(x)v_{\nu\mu}(x)^* \otimes e_\nu^*$, where $\{ e_\nu^*\}_{\nu \in I} \subset \Hom(\bC^I,\bC)$ is the dual basis of $\{e_\nu\}$.  
Then we have $p_\bv(x) \psi_\mu ^{\bv}(x)=\psi_\mu(x)$ for $x \in U_\mu$ and  $\psi_\mu^{\bv}(x)^*\psi_{\nu}^\bv(x) = v_{\mu \nu}(x)$ for $x\in U_{\mu \nu}$. That is, $p_\bv$ is a projection with the support $E_\bv$ and $\psi_\mu ^\bv$ is a local trivialization of $E_\bv$.
\end{rmk}
\begin{defn}
For two $(\varepsilon , \cU)$-flat bundles $\bv_1=\{ v_{\mu\nu}^1\}$ and $\bv_2=\{ v_{\mu\nu}^2\}$, a \emph{morphism of $(\varepsilon , \cU)$-flat bundles} is a family of unitaries $\bu=\{ u_\mu \} _{\mu \in I} \in \mathrm{U}(P)^I$ such that 
\[ \sup _{\mu, \nu \in I} \sup_{x \in U_{\mu\nu}} \| u_\mu v_{\mu \nu}^{1}(x) u_\nu^* -v_{\mu\nu}^2(x) \|  < \varepsilon . \]
We write $\Hom _\varepsilon (\bv_1 , \bv_2)$ for the set of morphisms of $\varepsilon$-flat bundles. Moreover, for $\bu \in \Hom _\varepsilon (\bv_1 ,   \bv_2 )$ and $\delta>0$, let
\[ \cG_\delta (\bu):= \Big\{ \{ \bar{u}_\mu \colon U_\mu \to \mathrm{U}(P) \}_{\mu \in I}  \mid \begin{array}{l}\bar{u}_\mu(x) v_{\mu \nu}^1(x) \bar{u}_\nu(x)^* = v_{\mu \nu}^2(x), \\  \| \bar{u}_{\mu}(x) - u_\mu  \| < \delta \end{array} \Big\}_{\textstyle .} \]
\end{defn}
For $\bar{u} \in \cG_{\delta}(\bu)$, we use the same symbol $\bar{u}$ for the induced unitary isomorphism $\bar{u} \colon E_{\bv_1} \to E_{\bv_2}$.

\begin{lem}\label{lem:gauge}
There is a constant $C_1=C_1(\cU)>0$ depending only on the open cover $\cU$ such that the following hold. Let $0 < \varepsilon < (3C_1)^{-1}$, $\bv_1 , \bv_2 \in \Bdl_P^{\varepsilon , \cU}(X)$ and $\bu \in \Hom _{\varepsilon }( \bv_1 , \bv_2 )$.
\begin{enumerate}
\item The set $\cG_{C_1 \varepsilon} (\bu)$ is non-empty.
\item The inclusion $\cG_{ C_1 \varepsilon} (\bu) \to \cG_{3 C_1 \varepsilon}(\bu)$ is homotopic to a constant map.
\end{enumerate}
\end{lem}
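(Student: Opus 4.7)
The plan is to construct an element of $\cG_{C_1\varepsilon}(\bu)$ explicitly by averaging the morphism $\bu$ against the partition of unity of Remark \ref{rmk:bundle} and then projecting onto unitaries via polar decomposition; part~(2) will then follow by null-homotoping along a logarithmic path in the automorphism group of $E_{\bv_1}$.

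For (1), fix a partition of unity $\{\eta_\mu^2\}$ subordinate to $\cU$ as in Remark \ref{rmk:bundle} and set
\[ S_\mu^\bu(x) := \sum_\nu \eta_\nu(x)^2\, v_{\mu\nu}^2(x)\, u_\nu\, v_{\nu\mu}^1(x), \qquad x\in U_\mu. \]
Two direct calculations are key. First, the cocycle identities for $\bv_1$ and $\bv_2$ give the \emph{exact} transition $S_\mu^\bu\, v_{\mu\sigma}^1 = v_{\mu\sigma}^2\, S_\sigma^\bu$ on $U_{\mu\sigma}$. Second, writing $v_{\mu\nu}^2 u_\nu v_{\nu\mu}^1 - u_\mu = v_{\mu\nu}^2(u_\nu v_{\nu\mu}^1 - v_{\nu\mu}^2 u_\mu)$ and applying the morphism bound on $\bu$ gives $\|S_\mu^\bu(x) - u_\mu\| < \varepsilon$. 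For $\varepsilon$ sufficiently small, the polar decomposition $\bar u_\mu^\bu := S_\mu^\bu\, |S_\mu^\bu|^{-1}$ is therefore well defined and unitary, with $\|\bar u_\mu^\bu - u_\mu\| \le C_1 \varepsilon$ for a suitable constant $C_1$. The transition for $S^\bu$ forces $|S_\sigma^\bu|^2 = v_{\sigma\mu}^1 |S_\mu^\bu|^2 v_{\mu\sigma}^1$, so holomorphic functional calculus yields $|S_\sigma^\bu|^{-1} = v_{\sigma\mu}^1 |S_\mu^\bu|^{-1} v_{\mu\sigma}^1$ and hence $\bar u_\mu^\bu v_{\mu\sigma}^1 = v_{\mu\sigma}^2 \bar u_\sigma^\bu$. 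This exhibits $\bar u^\bu \in \cG_{C_1\varepsilon}(\bu)$.

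For (2), given any $\bar u \in \cG_{C_1\varepsilon}(\bu)$, set $w_\mu := (\bar u_\mu^\bu)^* \bar u_\mu$. Comparing the transition identities for $\bar u$ and $\bar u^\bu$ shows that $w$ is a unitary automorphism of $E_{\bv_1}$, i.e., $w_\mu v_{\mu\sigma}^1 = v_{\mu\sigma}^1 w_\sigma$, while $\|w_\mu - 1\| \le 2 C_1 \varepsilon < 2/3$. The principal branch of the logarithm then provides self-adjoint $a_\mu$ with $w_\mu = \exp(i a_\mu)$, and the conjugation property of $w$ transports through functional calculus to the intertwining $a_\sigma = v_{\sigma\mu}^1 a_\mu v_{\mu\sigma}^1$. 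The formula $H(\bar u, t)_\mu := \bar u_\mu^\bu \exp(i t a_\mu)$, $t\in[0,1]$, is then continuous in $(\bar u, t)$, satisfies the exact cocycle transition at every $t$, and obeys $\|H(\bar u,t)_\mu - u_\mu\| \le 3 C_1 \varepsilon$, so takes values in $\cG_{3C_1\varepsilon}(\bu)$. Since $H(\cdot, 0) \equiv \bar u^\bu$ is constant and $H(\cdot, 1)$ is the inclusion, this is the required null-homotopy.

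The main technical point throughout is preserving the \emph{exact} cocycle transition when passing from $S_\mu^\bu$ to $\bar u_\mu^\bu$ and from $w_\mu$ to $\exp(i t a_\mu)$; both steps rely on the observation that the relevant transitions act by unitary conjugation and hence commute with the holomorphic functional calculus of $|S_\mu^\bu|$ and of $w_\mu$.
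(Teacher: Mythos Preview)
Your argument is correct and follows the same two-step outline as the paper: produce an element of $\cG_{C_1\varepsilon}(\bu)$ by averaging the gauge data against the partition of unity and taking a polar part, then null-homotope via the logarithm/exponential on the unitary automorphism group of $E_{\bv_1}$. The only difference is in presentation. You work locally on each $U_\mu$ with the operator $S_\mu^\bu$, whereas the paper first reduces to $u_\mu=1$, embeds both bundles in $P\otimes\bC^I$ via Remark~\ref{rmk:bundle}, and takes the polar part of $p_{\bv_2}p_{\bv_1}$ globally; after the reduction to $u_\mu=1$ your $S_\mu^\bu$ is exactly $(\psi_\mu^{\bv_2})^*\psi_\mu^{\bv_1}$ and the resulting $\bar u_\mu$ coincide. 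A pleasant by-product of your local bookkeeping is that the estimate $\|S_\mu^\bu-u_\mu\|<\varepsilon$ uses $\sum_\nu\eta_\nu^2=1$ directly, so your constant $C_1$ can be taken absolute, while the paper's global estimate on $\|p_{\bv_1}-p_{\bv_2}\|$ picks up a factor of $|I|^2$ and leads to $C_1=|I|^2+1$.
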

\begin{proof}
By replacing $\bv_1$ with $\bu \cdot \bv_1:= \{u_\mu v_{\mu \nu}^1 u_\nu^* \}_{\mu, \nu \in I}$, we may assume that $u_\mu =1$ for any $\mu \in I$, that is, $\| v^1_{\mu\nu}(x) - v^2_{\mu \nu}(x)\|<\varepsilon$. Let $p_\bv$ and $\psi_{\mu}^\bv$ be as in Remark \ref{rmk:bundle}.

Set $C_1:=|I|^2+1$ (then $|I|^2\varepsilon < 1/3$). By the triangle inequality, we have
\begin{align} \| p_{\bv_1} - p_{\bv_2} \| \leq \sup _{x \in X} \sum _{\mu, \nu}\eta_\mu (x)\eta_\nu (x) \| v_{\mu\nu}^1(x)-v_{\mu\nu}^2(x) \| < |I|^2 \varepsilon \label{form:pdiff} \end{align}
and hence
\begin{align} \| p_{\bv_1}p_{\bv_2}p_{\bv_1} - p_{\bv_1} \| = \| p_{\bv_1}(p_{\bv_1} - p_{\bv_2})p_{\bv_1} \| < |I|^2\varepsilon. \label{form:ppp} \end{align}
Let us regard $p_{\bv_1}p_{\bv_2}p_{\bv_1}$ as an element of the corner C*-algebra $p_{\bv_1} (C(X) \otimes \bB(P) \otimes \bM_I) p_{\bv_1}$. Then the above inequality implies that
\[ \sigma (p_{\bv_1}p_{\bv_2}p_{\bv_1} ) \subset  [1-|I|^2\varepsilon, 1+|I|^2\varepsilon] \subset  [2/3,4/3]\] 
and especially $p_{\bv_1}p_{\bv_2}p_{\bv_1}$ is invertible.

Now we consider the polar decomposition of the bounded operator
\[ p_{\bv_2}p_{\bv_1} \colon p_{\bv_1}(C(X) \otimes P \otimes \bC^I) \to p_{\bv_2}(C(X) \otimes P \otimes \bC^I),\] 
which is invertible since so is $(p_{\bv_2}p_{\bv_1})^*p_{\bv_2}p_{\bv_1} =p_{\bv_1}p_{\bv_2}p_{\bv_1}$.
Then the unitary component $w:=p_{\bv_2} p_{\bv_1}(p_{\bv_1}p_{\bv_2}p_{\bv_1})^{-1/2}$ satisfies $w^*w=p_{\bv_1}$ and $ww^*=p_{\bv_2}$. 
We remark that $\| (p_{\bv_1}p_{\bv_2}p_{\bv_1})^{-1/2}-p_{\bv_1} \| \leq |I|^2\varepsilon$ holds. Indeed, we have $|(t^{-1/2}-1)'| = |-t^{-3/2}/2| \leq  1$ on $[2/3, 4/3]$, and hence $|t^{-1/2}-1| \leq |I|^2\varepsilon$ on $[1-|I|^2\varepsilon, 1+|I|^2\varepsilon]$. Therefore we obtain that
\begin{align} \| w(x) -p_{\bv_2}p_{\bv_1} \| = \| p_{\bv_2}p_{\bv_1}((p_{\bv_1}p_{\bv_2}p_{\bv_1})^{-1/2} - p_{\bv_1}) \| < |I|^2\varepsilon.  \label{form:polar} \end{align}

Now we define the family $\{ \bar{u}_\mu \}_{\mu \in I}$ as
\[ \bar{u}_\mu(x) := (\psi_{\mu}^{\bv_2}(x))^* w(x) \psi_{\mu}^{\bv_1}(x). \]
This $\{ \bar{u}_\mu\}$ is contained in $\cG_{C_1\varepsilon}(\bu)$ since 
\begin{align*} 
\bar{u}_\mu(x) v_{\mu\nu}^1(x) \bar{u}_\nu(x)^* =& \psi_{\mu}^{\bv_2}(x)^* w(x) \psi_{\mu}^{\bv_1}(x)\psi_{\mu}^{\bv_1}(x)^* \psi_{\nu}^{\bv_1}(x)  \psi_{\nu}^{\bv_1}(x)^* w(x)^* \psi_{\nu}^{\bv_2}(x) \\
 =&\psi_{\mu}^{\bv_2}(x)^* \psi_{\nu}^{\bv_2}(x)  = v_{\mu\nu}^2(x)
\end{align*}
and
\begin{align*}
\| \bar{u}_\mu(x) - 1\| & \leq \|(\psi_{\mu}^{\bv_2}(x))^* (w(x) -p_{\bv_2}p_{\bv_1}) \psi_{\mu}^{\bv_1}(x)  \| + \| (\psi_{\mu}^{\bv_2}(x))^*\psi_{\mu}^{\bv_1}(x) -1 \| \\
& < |I|^2 \varepsilon + \Big\| \sum_\mu \eta_{\mu}(x)^2 (v_{\nu\mu}^2(x)^*v_{\nu\mu}^1(x) -1) \Big\| \\
&< (|I|^2+1)\varepsilon = C_1\varepsilon.
\end{align*}

To see (2), let us fix $\bar{u}=\{ \bar{u}_\mu \} \in \cG_{C_1\varepsilon}(\bu)$. 
Let $B$ denote the C*-algebra 
\[ \Big\{ \{ h_\mu\}_{\mu \in I} \in \prod_{\mu \in I} C_b(U_\mu, \bB(P)) \mid v_{\nu\mu}(x)h_{\mu}(x)v_{\mu\nu}(x)=h_{\nu}(x) \ \forall x \in U_{\mu\nu} \Big\} \]
and let $B_{\mathrm{sa}, r} := \{ b \in B \mid b=b^*,  \| b\|<r \}$ for $r>0$. Set $\delta :=  4\sin ^{-1} (C_1\varepsilon /2)$. Then, $e(\{ h_\mu \}):= \{ \bar{u}_\mu e^{ih_\mu} \}_{\mu \in I}$ gives a continuous map $e \colon B_{\mathrm{sa}, \delta } \to \cG_{3C_1\varepsilon }(\bu)$. Moreover, since any $\bar{u}' \in \cG_{C_1 \varepsilon}(\bu)$ satisfies $\| \bar{u}_\mu - \bar{u}_\mu' \|<2C_1 \varepsilon$, we have $\bar{u}'=e(-i\log (\bar{u}_\mu^* \bar{u}_\mu'))$. 
That is, we obtain
\[ \cG_{C_1 \varepsilon }(\bu) \subset e(B_{\mathrm{sa}, \delta} ) \subset \cG_{3C_1 \varepsilon}(\bu). \]
Now we get the conclusion since $e(B_{\mathrm{sa}, \delta} )$ is contractible.
\end{proof}

For an open cover $\cU $ of $X$ and a closed subset $Y \subset X$, we write $\cU|_Y$ for the open cover $\{ U_\mu \cap Y \} _{\mu \in I_Y}$ of $Y$, where $I_Y:= \{ \mu \in I \mid U_\mu \cap Y \neq \emptyset\}$. For a \v{C}ech $1$-cocycle $\bv$ on $\cU$, we write $\bv|_Y$ for the restriction $\{ u_\mu |_{U_\mu \cap Y } \} _{\mu \in I_Y}$.
\begin{defn}
Let $(X,Y)$ be a pair of compact spaces with a finite open cover $\cU=\{ U_\mu \}_{\mu \in I}$.
An \emph{$(\varepsilon , \cU )$-flat stably relative bundle} on $(X,Y)$ with the typical fiber $(P , Q)$ is a quadruple $\fv:=(\bv_1,\bv_2,\bv_0, \bu)$, where 
\begin{itemize}
\item $\bv_1 $ and $\bv_2 $ are $(\varepsilon, \cU)$-flat $P$-bundle on $X$, 
\item $\bv_0$ is a $(\varepsilon , \cU _Y)$-flat $Q$-bundle on $Y$ and
\item $\bu \in \Hom_{\varepsilon} (\bv_1|_Y \oplus \bv_0 ,  \bv_2|_Y \oplus \bv_0)$.
\end{itemize}
We write the set of $(\varepsilon , \cU )$-flat stably relative bundles on $(X,Y)$ with the typical fiber $(P , Q)$ as $\Bdl _{P,Q}^{\varepsilon , \cU}(X,Y)$. 
\end{defn}
In the particular case that $Q=0$, we simply call a triple $\fv=(\bv_1,\bv_2 , \bu)$ an \emph{$(\varepsilon , \cU)$-flat relative bundle} and write as $\fv \in \Bdl_P^{\varepsilon, \cU}(X,Y)$.  
Our primary concern is a \emph{$(\varepsilon , \cU)$-flat stably relative vector bundle}, that is, a $(\varepsilon , \cU)$-flat stably relative bundle of Hilbert $\bC$-modules with the typical fiber $(\bC^n, \bC^m)$. 

\begin{defn}\label{defn:af}
For $0< \varepsilon < (3C_1)^{-1}$, we associate the $\K$-theory class
\[ [\fv]:=[E_{\bv_1}, E_{\bv_2}, E_{\bv_0}, \bar{u}] \in \K^0(X,Y;A) \]
to $\fv = (\bv_1 , \bv_2 , \bv_0 , \bu) \in \Bdl_{P,Q}^{\varepsilon , \cU}(X,Y)$, where $\bar{u}$ is an arbitrary element of   $\cG_{C_1\varepsilon} (\bu)$.
\end{defn}
This definition is independent of the choice of $\bar{u}$ by Lemma \ref{lem:gauge} (2).

\begin{rmk}\label{rmk:bdlequiv}
The associated K-theory class in Definition \ref{defn:af} depends only on unitary the equivalence class of $\fv$.
For $\bv \in \Bdl_P^{\varepsilon , \cU}(X)$ and $\bu \in \mathrm{U}(P)^I$, we say that 
\[ \bu \cdot \bv:= \{ u_\mu v_{\mu \nu} u_\nu ^* \}_{\mu,\nu \in I} \]
is unitary equivalent to $\bv$. Since $\bv$ and $\bu \cdot \bv$ are cohomologous as \v{C}ech $1$-cocycles, $E_{\bv}$ and $E_{\bu \cdot \bv}$ determine the same $\K$-theory class. 
Similarly, we say that $\fv \in \Bdl_{P,Q}^{\varepsilon , \cU}(X,Y)$ is unitary equivalent to $\fu \cdot \fv := (\bu_1 \cdot \bv_1 , \bu_2 \cdot \bv_2 , \bu _0 \cdot \bv_0 , \fu \cdot \bu)$ for $\fu:=(\bu_1,\bu_2,\bu_0) \in \mathrm{U}(P)^I \times \mathrm{U}(P)^I \times \mathrm{U}(Q)^I$, where  
\[ \fu \cdot \bu  := \{ \diag (u_{1,\mu} , u_{2, \mu }, u_{0, \mu })u_\mu \diag (u_{1,\mu} , u_{2, \mu }, u_{0, \mu })^* \}_{\mu \in I_Y}. \]
Then $\fu$ induces an isomorphism of the underlying stably relative bundles. In particular we have $[\fv] = [\fu \cdot \fv] \in \K^0(X,Y;A)$. 
\end{rmk}

Next, we define the (resp.\ stably) almost flat $\K_0$-group $\K^0_{\mathrm{af}} (X,Y; A)$ (resp.\ $\K^0_{\mathrm{s\mathchar`-af}}(X,Y;A)$) as subgroups of $\K^0(X,Y;A)$ and study their permanence property with respect to the pull-back. The discussion is inspired from the work by Hunger~\cite{mathGT160707820}. 

Let us fix a point $x_{\mu \nu} \in U_{\mu \nu}$ for each $\mu, \nu \in I$ with $U_{\mu\nu} \neq \emptyset$ in the way that $x_{\mu \nu }=x_{\nu\mu}$.
\begin{lem}\label{lem:ineq}
Let $\mu , \nu , \sigma \in I$ such that $U_{\mu\nu\sigma}:=U_\mu \cap U_\nu \cap U_\sigma \neq \emptyset$. Then, for $\bv \in \Bdl _P^{\varepsilon ,\cU}(X)$, we have 
\[\| v _{\mu\nu}(x_{\mu\nu}) v_{\nu\sigma}(x_{\nu\sigma} ) - v_{\mu\sigma} (x_{\mu\sigma}) \| < 3\varepsilon .\]
\end{lem}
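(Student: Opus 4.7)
The plan is to reduce the inequality to the cocycle identity at a single common point. Since $U_{\mu\nu\sigma}$ is nonempty, pick any point $x \in U_{\mu\nu\sigma}$. Because $\bv$ is a \v{C}ech $1$-cocycle on $\cU$, we have the exact identity
\[ v_{\mu\nu}(x) v_{\nu\sigma}(x) = v_{\mu\sigma}(x). \]
So the idea is to insert $v_{\mu\nu}(x) v_{\nu\sigma}(x) - v_{\mu\sigma}(x) = 0$ into the quantity to be estimated, and pay the triangle-inequality cost of moving each $v_{\alpha\beta}$ from the chosen basepoint $x_{\alpha\beta}$ to $x$, where the $(\varepsilon, \cU)$-flatness condition applies.

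More concretely, I would write
\[ v_{\mu\nu}(x_{\mu\nu}) v_{\nu\sigma}(x_{\nu\sigma}) - v_{\mu\sigma}(x_{\mu\sigma}) = A + B + C, \]
with
\begin{align*}
A &= \bigl(v_{\mu\nu}(x_{\mu\nu}) - v_{\mu\nu}(x)\bigr)\, v_{\nu\sigma}(x_{\nu\sigma}), \\
B &= v_{\mu\nu}(x)\,\bigl(v_{\nu\sigma}(x_{\nu\sigma}) - v_{\nu\sigma}(x)\bigr), \\
C &= v_{\mu\sigma}(x) - v_{\mu\sigma}(x_{\mu\sigma}),
\end{align*}
using the cocycle identity to replace $v_{\mu\nu}(x)v_{\nu\sigma}(x)$ by $v_{\mu\sigma}(x)$. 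Each of the three pairs of points lies in the corresponding double intersection ($\{x_{\mu\nu},x\} \subset U_{\mu\nu}$ etc., since $x \in U_{\mu\nu\sigma}$ sits inside all three), so the $(\varepsilon, \cU)$-flatness gives $\|v_{\mu\nu}(x_{\mu\nu}) - v_{\mu\nu}(x)\| < \varepsilon$ and similarly for the other two differences. Together with the fact that $v_{\mu\nu}, v_{\nu\sigma}$ are $\mathrm{U}(P)$-valued and hence have norm one, the triangle inequality yields $\|A\|+\|B\|+\|C\| < 3\varepsilon$.

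There is no real obstacle here — the statement is essentially just the $\varepsilon$-flatness condition applied three times, packaged with the cocycle relation. The only small care is to note that $x \in U_{\mu\nu\sigma}$ ensures $x$ belongs to each of the double intersections $U_{\mu\nu}$, $U_{\nu\sigma}$, $U_{\mu\sigma}$ so that the $\varepsilon$-flatness inequality is legitimately applicable at each step.
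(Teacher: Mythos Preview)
Your proof is correct and follows essentially the same approach as the paper: pick a point $x \in U_{\mu\nu\sigma}$, use the cocycle identity $v_{\mu\nu}(x)v_{\nu\sigma}(x)=v_{\mu\sigma}(x)$ there, and apply the $(\varepsilon,\cU)$-flatness three times via the triangle inequality. The paper groups the estimate as $2\varepsilon + \varepsilon$ rather than your explicit $A+B+C$ decomposition, but the argument is the same.
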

\begin{proof}
Let us choose a point $x \in U_{\mu\nu \sigma}$. Then, 
\begin{align*}
&\| v _{\mu\nu}(x_{\mu\nu}) v_{\nu\sigma}(x_{\nu\sigma} ) - v_{\mu\sigma} (x_{\mu\sigma}) \|\\ < & \| v _{\mu\nu}(x_{\mu\nu}) v_{\nu\sigma}(x_{\nu\sigma} ) - v_{\mu\nu}(x)v_{\mu\sigma} (x)\| + \| v_{\mu\sigma} (x_{\mu\sigma})- v_{\mu\sigma} (x)  \| \\
	<& 2\varepsilon + \varepsilon = 3\varepsilon. \qedhere
\end{align*}
\end{proof}

\begin{lem}\label{lem:loctriv}
Let $X$ be a locally compact space with $\pi_1(X)=0$ and let $\cU$ be its finite good open cover. Then, there is a constant $C_2=C_2(\cU)$ depending only on $\cU$ such that $\Hom_{C_2  \varepsilon}( \mathbf{1}, \bv)$ is non-empty for any $\bv \in \Bdl_P^{\varepsilon, \cU}(X)$. 
\end{lem}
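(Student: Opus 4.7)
The strategy is to reduce the problem to a combinatorial question on the nerve of $\cU$. Since $\cU$ is a good cover of a simply connected space, the Nerve Theorem ensures that the (finite) nerve $N(\cU)$ is itself simply connected. First I would discretize the cocycle: set $\tilde v_{\mu\nu}:=v_{\mu\nu}(x_{\mu\nu})$ using the chosen basepoints. By Lemma~\ref{lem:ineq}, the family $\{\tilde v_{\mu\nu}\}$ satisfies the cocycle identity up to $3\varepsilon$ on every non-empty triple intersection, so the task becomes integrating this approximate $1$-cocycle on a finite simply connected simplicial $2$-complex to an approximate $0$-cochain $\{u_\mu\}$.

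Next I would fix a basepoint $\mu_0\in I$ and a spanning tree $T$ of the $1$-skeleton of $N(\cU)$. Setting $u_{\mu_0}:=1$ and inductively $u_\mu:=\tilde v_{\mu\tau}\,u_\tau$ whenever $\tau$ is the parent of $\mu$ in $T$ produces unitaries $u_\mu$ such that $u_\mu u_\tau^*=\tilde v_{\mu\tau}$ holds \emph{exactly} on every tree edge, and $u_\mu u_\nu^*$ equals the ordered product of $\tilde v$'s read along the unique tree path from $\nu$ to $\mu$.

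The content is the estimate on non-tree edges. For $(\mu,\nu)$ with $U_{\mu\nu}\neq\emptyset$, I would consider the closed edge path $L$ obtained by concatenating the tree path $\mu_0\leadsto\mu$, the edge $\mu\nu$, and the reversed tree path $\nu\leadsto\mu_0$. By simple connectedness of $N(\cU)$, $L$ admits a simplicial filling by $2$-simplices; since $N(\cU)$ is finite and only finitely many such loops occur, the minimal filling area is bounded by some combinatorial constant $N_0=N_0(\cU)$. Each triangular face replaces a length-$2$ segment $\tilde v_{\alpha\beta}\tilde v_{\beta\gamma}$ in the product $u_\mu u_\nu^*\tilde v_{\mu\nu}^*$ by its length-$1$ shortcut $\tilde v_{\alpha\gamma}$ at the cost of $3\varepsilon$ by Lemma~\ref{lem:ineq}; since multiplication by unitaries is an isometry, the total accumulated error is at most $3N_0\varepsilon$, yielding $\|u_\mu u_\nu^*-\tilde v_{\mu\nu}\|<3N_0\varepsilon$. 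Combining with the $\varepsilon$-flatness bound $\|\tilde v_{\mu\nu}-v_{\mu\nu}(x)\|<\varepsilon$ for $x\in U_{\mu\nu}$, one gets $\{u_\mu\}\in\Hom_{C_2\varepsilon}(\mathbf{1},\bv)$ with $C_2:=3N_0+1$, depending only on $\cU$.

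The main obstacle I expect is the existence of the combinatorial constant $N_0(\cU)$: one needs every null-homotopic edge loop in a finite simply connected simplicial $2$-complex to admit a triangular filling whose size is bounded purely in terms of the complex, uniformly over the cocycle $\bv$. This is the standard finite-presentation input (the trivial group has a finite presentation via the face relations of $N(\cU)$, so every null-homotopic word of a bounded length — here, bounded by twice the diameter of $T$ plus one — admits a bounded-area filling). Everything else is routine bookkeeping of elementary norm estimates.
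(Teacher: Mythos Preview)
Your proposal is correct and follows essentially the same route as the paper: both fix a spanning tree of the nerve, define $u_\mu$ as the ordered product of the discretized transitions $v_{\mu\nu}(x_{\mu\nu})$ along the tree path, use simple connectedness of $N(\cU)$ to fill the loop attached to each non-tree edge by a bounded number of $2$-simplices, and invoke Lemma~\ref{lem:ineq} to pay $3\varepsilon$ per triangle. Your write-up is in fact slightly more careful than the paper's in separating out the final $\varepsilon$ needed to pass from $\tilde v_{\mu\nu}=v_{\mu\nu}(x_{\mu\nu})$ back to $v_{\mu\nu}(x)$.
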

\begin{proof}
Let $N_\cU$ denote the nerve of $\cU$. For $\mu , \nu \in I$ with $U_{\mu\nu} \neq \emptyset$, we write $\langle \mu , \nu \rangle$ for the corresponding $1$-cell of $N_\cU$ whose direction is from $\nu$ to $\mu$. Let us fix a maximal subtree $T$ of $N_\cU$ and a reference point $\mu_0 \in I$. Then, for each $\mu \in I$ there is a unique minimal oriented path $\ell_\mu$ in $T$ from $\mu_0$ to $\mu$. Since $\cU$ is an good open cover, $X$ is homotopy equivalent to $N_\cU$ and in particular we have $\pi_1(N_\cU)=0$. Therefore, the closed loop $\ell_\mu^{-1} \circ \langle \mu, \nu \rangle \circ \ell_\nu$ is written of the form 
\begin{align} \prod_{i=1}^{C_{\mu \nu}} \ell_{\nu_i}^{-1} \circ \langle \mu_i , \sigma_i\rangle \circ  \langle \sigma_i , \nu_i \rangle  \circ \langle \nu_i , \mu_i \rangle  \circ \ell_{\nu_i}, \label{form:pathcomp}
\end{align}
where each $\mu_i , \nu_i, \sigma_i \in I$ satisfies $U_{\mu_i  \nu_i \sigma_i} \neq \emptyset $ (that is, $\{ \mu_i , \nu_i, \sigma_i \}$ is a $2$-cell of $N_\cU$).

For each $\mu \in I$, let $\mu_1,\dots, \mu_k \in I$ be the $0$-cells of $T$ such that $\ell_\mu := \langle \mu_{k} , \mu_{k-1} \rangle \circ \dots \circ \langle \mu_{1} , \mu_{0} \rangle $ and set
\begin{align}
u_\mu:=v_{\mu_{k}\mu_{k-1}}(x_{\mu_{k}\mu_{k-1}})v_{\mu_{k-1}\mu_{k-2}}(x_{\mu_{k-1}\mu_{k-2}}) \dots v_{\mu_{1}\mu_{0}}(x_{\mu_{1}\mu_0}) . \label{form:path}
\end{align}
By Lemma \ref{lem:ineq} and (\ref{form:pathcomp}), we get 
\[\| u_{\mu}v_{\mu \nu }u_\nu^* -1 \| <  3C_{\mu\nu}\varepsilon. \]
Now the proof is completed by choosing $C_2(\cU) := 3\max _{\mu , \nu \in I}C_{\mu\nu}$. 
\end{proof}

\begin{prp}\label{prp:subdiv}
Let $\cU = \{ U_\mu\}_{\mu \in I}$ be a finite good open cover of $X$. Assume that there is a subset $J \subset I$ such that $\cV:= \{U_\mu \}_{\mu \in J}$ also covers $X$. Then there is a constant $C_3=C_3(\cU, \cV)$ depending only on $\cU$ and $\cV$ such that the following hold. 
\begin{enumerate}
\item For any $\bv \in \Bdl^{\varepsilon , \cV}_{P}(X)$ there is $\tilde{\bv}=\{ \tilde{v}_{\mu \nu}\}_{\mu , \nu \in I} \in \Bdl^{C_3 \varepsilon, \cU}_P(X)$ such that $\tilde{v}_{\mu \nu}=v_{\mu \nu}$ for any $\mu , \nu \in J$. 
\item Let $\bv, \bv' \in \Bdl ^{\varepsilon , \cV}_P(X)$ with $\tilde{\bv}, \tilde{\bv}' \in \Bdl^{C_3\varepsilon, \cU}_P(X)$ constructed in (1). For $\bu \in \Hom_\varepsilon (\bv ,  \bv')$, there is $\tilde{\bu} \in \Hom_{(4C_3+1)\varepsilon}( \tilde{\bv}, \tilde{\bv}')$ such that $\tilde{u}_\mu =u_\mu$ for any $\mu \in J$.
\end{enumerate}
\end{prp}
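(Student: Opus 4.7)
The plan is to extend the $(\varepsilon, \cV)$-flat cocycle $\bv$ to a $(C_3 \varepsilon, \cU)$-flat cocycle $\tilde\bv$ on $\cU$ by constructing suitable unitary trivializations of $E_\bv$ over each $U_\mu$ for $\mu \in I \setminus J$. Since $\cU$ is a good cover, every $U_\mu$ is contractible, hence $E_\bv|_{U_\mu}$ is trivial; any continuous unitary trivialization $\tilde\psi_\mu \colon U_\mu \to \mathrm{Iso}(P, E_\bv|_{U_\mu})$ (setting $\tilde\psi_\mu := \psi_\mu^\bv$ for $\mu \in J$) automatically produces a strict \v{C}ech cocycle $\tilde v_{\mu\nu}(x) := \tilde\psi_\mu(x)^{-1}\tilde\psi_\nu(x)$ extending $\bv$. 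The entire content of (1) is thus to choose the $\tilde\psi_\mu$ so that the resulting transitions vary by at most $C_3\varepsilon$ on each $U_{\mu\nu}$.

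For $\mu \in I \setminus J$, I would apply Lemma \ref{lem:loctriv} to the simply connected space $U_\mu$ with its good cover $\cV|_{U_\mu} = \{U_\mu \cap U_j\}_{j \in J_\mu}$, where $J_\mu := \{j \in J \mid U_\mu \cap U_j \neq \emptyset\}$; this yields unitaries $u^\mu_j \in \mathrm{U}(P)$ satisfying $\|u^\mu_{j_1} v_{j_1 j_2}(x)(u^\mu_{j_2})^{-1} - 1\| < C_2(\cV|_{U_\mu})\,\varepsilon$. Choosing a partition of unity $\{\rho^\mu_j\}_{j \in J_\mu}$ subordinate to $\cV|_{U_\mu}$, I set
\[ \hat\psi_\mu(x) := \sum_{j \in J_\mu} \rho^\mu_j(x)\, \psi_j^\bv(x)\, (u^\mu_j)^{-1}, \]
a convex combination of operators that, by the above estimate, pairwise agree up to $O(\varepsilon)$. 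Consequently $\hat\psi_\mu(x)$ takes values in $p_\bv(x)\cdot P^J$ and satisfies $\|\hat\psi_\mu(x)^*\hat\psi_\mu(x) - 1_P\| = O(\varepsilon)$; the polar-decomposition argument of Lemma \ref{lem:gauge} then upgrades $\hat\psi_\mu$ to a genuine unitary trivialization $\tilde\psi_\mu$ with $\|\tilde\psi_\mu - \hat\psi_\mu\| = O(\varepsilon)$.

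The variation bound on $\tilde v_{\mu\nu}$ then rests on the following observation: at any $x \in U_{\mu\nu}$, there is some $j_0 \in J_\mu \cap J_\nu$ with $x \in U_{j_0}$ (since $\cV$ covers $X$), and the construction above gives $\tilde\psi_\mu(x) \approx \psi_{j_0}^\bv(x)(u^\mu_{j_0})^{-1}$ and $\tilde\psi_\nu(x) \approx \psi_{j_0}^\bv(x)(u^\nu_{j_0})^{-1}$ up to $O(\varepsilon)$. Hence $\tilde v_{\mu\nu}(x) \approx u^\mu_{j_0}(u^\nu_{j_0})^{-1}$, which is approximately constant in $x$: different choices of $j_0$ differ only by $O(\varepsilon)$ in view of the defining estimate for the $u^\mu_j$'s. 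Collecting all the $\varepsilon$-errors produces the required constant $C_3 = C_3(\cU, \cV)$.

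For (2), set $\tilde u_\mu := u_\mu$ for $\mu \in J$ and, for $\mu \in I \setminus J$, define $\tilde u_\mu := \tilde v^2_{\mu,j(\mu)}(x_\mu)\, u_{j(\mu)}\, \tilde v^1_{\mu,j(\mu)}(x_\mu)^{-1}$, the constant making the required identity hold exactly at the reference point $x_\mu \in U_\mu \cap U_{j(\mu)}$ for $\nu = j(\mu)$. The bound $\|\tilde u_\mu \tilde v^1_{\mu\nu}(x)\tilde u_\nu^{-1} - \tilde v^2_{\mu\nu}(x)\| < (4C_3+1)\varepsilon$ then falls out of a triangle inequality combining the variations of $\tilde v^1$ and $\tilde v^2$ between $x$ and the reference points $x_\mu, x_\nu$ (contributing at most $4C_3\varepsilon$ in the worst case $\mu, \nu \in I \setminus J$) with the $\varepsilon$-bound coming from $\bu \in \Hom_\varepsilon(\bv, \bv')$ applied at the reference point. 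The main obstacle throughout is to ensure the polar-decomposition step is carried out uniformly in $\bv$, so that the implicit constants depend only on $\cU, \cV$ and the fixed choices of partitions of unity; this requires a uniform estimate analogous to (\ref{form:polar}) in the proof of Lemma \ref{lem:gauge}.
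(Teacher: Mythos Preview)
Your approach is essentially the same as the paper's. For part (1), the paper also applies Lemma~\ref{lem:loctriv} to each $U_\sigma$ with $\sigma \in I \setminus J$ to obtain constants $\bu^\sigma \in \Hom_{C_2\varepsilon}(\mathbf{1}, \bv|_{U_\sigma})$, and then---rather than redoing the averaging and polar decomposition by hand as you do---simply invokes Lemma~\ref{lem:gauge}(1) to upgrade $\bu^\sigma$ to a genuine gauge $\bar u^\sigma \in \cG_{C_1C_2\varepsilon}(\bu^\sigma)$, which is exactly the unitary trivialization $\tilde\psi_\sigma$ you build; the extended cocycle is then read off from these trivializations. Your explicit construction of $\hat\psi_\mu$ followed by polar decomposition is precisely the content of the proof of Lemma~\ref{lem:gauge}(1), so you are re-deriving what the paper cites. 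For part (2), your definition of $\tilde u_\mu$ via a reference point $x_\mu \in U_\mu \cap U_{j(\mu)}$ and the ensuing triangle-inequality estimate match the paper's argument verbatim (the paper writes $\sigma_\mu$ for your $j(\mu)$ and $x_{\mu\sigma_\mu}$ for your $x_\mu$).
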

\begin{proof}
For $\sigma \in I \setminus J$, let $\cU_\sigma$ be the open cover $\{ U_\sigma \cap U_\mu\}_{\mu \in I}$ of $U_\sigma$. Let $C_\sigma :=C_1(\cU_\sigma)C_2(\cU _\sigma )$, where $C_1(\cU_\sigma)$ and $C_2(\cU_\sigma)$ are the constants as in Lemma \ref{lem:gauge} and Lemma \ref{lem:loctriv} respectively. Let $C_3(\cU, \cV):= 2\max _{\sigma \in I \setminus J} C_\sigma $. 

First we show (1). For $\sigma \in I \setminus J$, we apply Lemma \ref{lem:loctriv} to the restriction $\bv|_{U_\sigma}=\{ v_{\mu\nu}^\sigma := v_{\mu \nu}|_{U_{\mu \sigma}} \}$ to get a morphism $\bu^{\sigma} \in \Hom _{C_2(\cU_\sigma)\varepsilon} ( \mathbf{1}, \bv|_{U_\sigma })$. Let $\bar{u} \in \cG_{C_1(\cU_\sigma)C_2(\cU_\sigma) \varepsilon } (\bu)$. Then, $\tilde{\bv}:=\{ \tilde{v}_{\mu \nu} \}_{\mu, \nu \in I}$ defined by
\[
\tilde{v}_{\mu \nu}(x):=
\left\{
\begin{array}{ll}
v_{\mu\nu}(x) & \text{ if $\mu, \nu \in J$, }\\
u_{\nu}^{\mu}(x) & \text{ if $\mu \in J$ and $\nu \not \in J$, }\\
u_{\nu}^{\sigma}(x)^*u_{\mu}^\sigma(x) & \text{ if $\mu, \nu \not \in J$}.
\end{array}
\right.
\]
is a desired \v{C}ech $1$-cocycle. 

Next we show (2). For each $\mu \in I \setminus J$, we fix $\sigma _{\mu} \in J$ such that $U_{\mu \sigma_\mu} \neq \emptyset $. Let
\[\tilde{u}_{\mu}:= \tilde{v}_{\mu \sigma_\mu}'(x_{\mu \sigma _\mu}) u_{\sigma_\mu} \tilde{v}_{\mu \sigma_\mu}(x_{\mu \sigma_\mu})^*. \]
Then,
\begin{align*}
&\|\tilde{u}_\mu \tilde{v}_{\mu \sigma_\mu }(x)\tilde{u}_{\sigma_{\mu}}^*  - \tilde{v}'_{\mu \sigma_{\mu} }(x) \| \\
<& \|\tilde{u}_\mu \tilde{v}_{\mu \sigma_\mu }(x)\tilde{u}_{\sigma_{\mu}}^*  - \tilde{u}_\mu \tilde{v}_{\mu \sigma_\mu }(x_{\mu \sigma _\mu})\tilde{u}_{\sigma_{\mu}}^*  \| + \| \tilde{v}_{\mu \sigma_{\mu} }'(x_{\mu \sigma _\mu})  - \tilde{v}'_{\mu \sigma_{\mu} }(x) \|\\
<& 2C_3 \varepsilon
\end{align*} 
and hence
\begin{align*}
&\| \tilde{u}_\mu \tilde{v}_{\mu \nu}(x)\tilde{u}_\nu^*  - \tilde{v}'_{\mu \nu }(x) \|\\
\leq & \|\tilde{u}_\mu \tilde{v}_{\mu \sigma_\mu }(x)\tilde{u}_{\sigma_{\mu}}^*  - \tilde{v}'_{\mu \sigma_{\mu} }(x) \| + \| \tilde{u}_{\sigma_\mu} \tilde{v}_{\sigma_\mu \sigma_\nu}(x)\tilde{u}_{\sigma_\nu}^*  - \tilde{v}'_{\sigma_\mu \sigma_\nu }(x) \| \\
&+ \| \tilde{u}_{\sigma_\nu} \tilde{v}_{\sigma_\nu \nu}(x)\tilde{u}_\nu^*  - \tilde{v}'_{\sigma_\nu \nu }(x) \| \\
<& (4C _3 + 1) \varepsilon. \qedhere
\end{align*}
\end{proof}

Let $(X,Y)$ be a pair of finite CW-complexes. In this paper we call $\cU$ a good open cover of the pair $(X,Y)$ if it is a good open cover of $X$ such that $\cU|_Y$ is also a good open cover of $Y$. Such an open cover exists because $(X, Y)$ is homotopy equivalent to a pair of finite simplicial complexes. For a pair of simplicial complexes, the family of open star neighborhoods of $0$-cells satisfies the desired property.

\begin{defn}\label{defn:af}
Let $(X,Y)$ be a pair of finite CW-complex and let $\cU$ be a finite good open cover of $(X,Y)$. An element $\xi \in \K^0(X,Y ; A)$ is \emph{(resp.\ stably) almost flat} with respect to $\cU$ if for any $\varepsilon >0$ there is a $(\varepsilon, \cU)$-flat (resp.\ stably) relative vector bundle $\fv$ of finitely generated projective Hilbert $A$-modules such that $x=[\fv]$. 
\end{defn}

\begin{cor}\label{cor:covering}
The subgroup consisting of all (resp.\ stably) almost flat elements of $\K^0(X,Y;A)$ is independent of the choice of good open covers.
\end{cor}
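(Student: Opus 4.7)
Given two finite good open covers $\cU, \cU'$ of $(X, Y)$, I would compare them through a common good refinement $\cW$ and show invariance of the almost flat subgroup of $\K^0(X,Y;A)$ under such refinement.

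The first step is to construct a finite good open cover $\cW$ of $(X, Y)$ refining both $\cU$ and $\cU'$, with the additional property that the restriction $\cW|_U$ is itself a good open cover of $U$ for every $U \in \cU \cup \cU'$. Using that $(X, Y)$ is a finite CW pair (hence homotopy-equivalent to a finite simplicial pair), one may realize $\cU$ and $\cU'$ as open star covers of the $0$-cells of two triangulations and take $\cW$ as the open star cover of a common simplicial subdivision whose vertex set contains those of both. Open stars restrict well in refining triangulations, yielding the required property; the condition on $Y$ is preserved by working within the simplicial pair.

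The second step is to show that $\cU$-almost flatness equals $\cW$-almost flatness (and similarly for $\cU'$), which together give the corollary. The direction $\cU \Rightarrow \cW$ is the immediate pullback: for a refinement map $\phi \colon I_W \to I$ with $W_\alpha \subset U_{\phi(\alpha)}$, the cocycle $\phi^* \bv = \{v_{\phi(\alpha)\phi(\beta)}|_{W_{\alpha \beta}}\}$ is $(\varepsilon, \cW)$-flat with $E_{\phi^* \bv} \cong E_\bv$. For the converse $\cW \Rightarrow \cU$, I would follow the strategy of Proposition~\ref{prp:subdiv}: given $\bv \in \Bdl^{\varepsilon, \cW}_P(X)$, apply Lemma~\ref{lem:loctriv} on each contractible $U_\mu \in \cU$ to the restricted cocycle $\bv|_{U_\mu}$ (using the good cover $\cW|_{U_\mu}$) to obtain $\bu^\mu \in \Hom_{C_2 \varepsilon}(\mathbf{1}, \bv|_{U_\mu})$, and then lift via Lemma~\ref{lem:gauge}(1) to a trivialization $\tau_\mu$ of $E_\bv|_{U_\mu}$. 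Define the cocycle $\bv' = \{v'_{\mu\nu}\}$ on $\cU$ by $v'_{\mu\nu}(x) := \tau_\mu(x) \tau_\nu(x)^{-1}$ on $U_\mu \cap U_\nu$; then $E_{\bv'} \cong E_\bv$, so $[\bv'] = [\bv]$ in $\K^0(X,Y;A)$.

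The main obstacle is verifying that $\bv'$ is $(O(\varepsilon), \cU)$-flat. Expanding $\tau_\mu(x)$ in the local basis from Remark~\ref{rmk:bundle} expresses it as a finite sum of terms involving the constant unitaries $u^\mu_\alpha$ produced by Lemma~\ref{lem:loctriv} and cocycle values $v_{\alpha \beta}(x)$ with oscillation $O(\varepsilon)$; combined with the norm bound on $\cG_{C_1 C_2 \varepsilon}(\bu^\mu)$ from Lemma~\ref{lem:gauge}, this yields $\|v'_{\mu\nu}(x) - v'_{\mu\nu}(y)\| = O(\varepsilon)$ uniformly on $U_\mu \cap U_\nu$, with constants depending only on $\cU$ and $\cW$. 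After rescaling $\varepsilon$, we obtain $\cU$-almost flatness of the class $[\bv']=[\bv]$. The stably almost flat case is handled analogously by applying the construction componentwise to a quadruple $(\bv_1, \bv_2, \bv_0, \bu)$, using the morphism extension part of Proposition~\ref{prp:subdiv} to treat $\bu$.
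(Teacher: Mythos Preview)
Your approach is correct but takes a different route from the paper's. The paper's proof is three sentences: it sets $\cW := \cU \cup \cV$ (the union, so that both $\cU$ and $\cV$ sit inside $\cW$ as sub-indexed covers), applies Proposition~\ref{prp:subdiv} as a black box to extend an $(\varepsilon,\cU)$-flat representative to a $(C_3\varepsilon,\cW)$-flat one (together with the morphism via part~(2)), and then simply restricts the resulting cocycle to the sub-index set for $\cV$. No refinement is needed; the passage in both directions is handled by the same proposition.

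Your common-refinement strategy works too, but is more laborious. The pullback direction is immediate, as you say. In the coarsening direction you are essentially re-deriving a variant of Proposition~\ref{prp:subdiv} adapted to refinements rather than sub-covers: your local trivializations $\tau_\mu$ via Lemmas~\ref{lem:loctriv} and~\ref{lem:gauge} do give a cocycle $\bv'$ on $\cU$ with the correct underlying bundle, and the flatness estimate follows by the chaining argument you sketch (the oscillation of $v'_{\mu\nu}$ over $U_{\mu\nu}$ is bounded by a constant times $\varepsilon$, the constant involving the diameter of the nerve of $\cW|_{U_{\mu\nu}}$). One point you leave vague is the treatment of the morphism $\bu$: Proposition~\ref{prp:subdiv}(2) is formulated for the sub-cover situation, so you cannot invoke it directly, and you would need to write out the analogous transport of $\bu$ through the trivializations $\tau_\mu^i$, checking that the resulting constants $u'_\mu$ satisfy the morphism bound on $\cU$. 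This is routine but should be stated.

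What the paper's route buys is brevity and a single clean invocation of Proposition~\ref{prp:subdiv}. What your route buys is that it sidesteps the implicit requirement that $\cU \cup \cV$ itself be a good open cover of $(X,Y)$ (which is not automatic for arbitrary good covers $\cU$, $\cV$); your simplicial-subdivision construction of $\cW$ is more explicit about ensuring the needed goodness hypotheses.
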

We write $\K_{\mathrm{af}}^0(X,Y;A)$ (resp.\ $\K_{\mathrm{s\mathchar`-af}}^0(X,Y;A)$) for the subgroup of (resp.\ stably) almost flat elements.
\begin{proof}
Let $\cU$ and $\cV$ be two open covers and $\cW := \cU \cup \cV$. Assume that $\xi \in \K^0(X,Y ; A)$ is represented by an $(\varepsilon, \cU)$-flat stably relative vector bundle $\fv=(\bv_1,\bv_2,\bv_0,\bu)$. By Proposition \ref{prp:subdiv} (1), we get $(C_3\varepsilon , \cW)$-flat bundles $\bw_1$, $\bw_2$ and $\bw_0$. Moreover, by Proposition \ref{prp:subdiv} (2), $\bu$ can be extended to $\tilde{\bu} \in \Hom_{(4C_3+1)\varepsilon}(\bw_1|_Y \oplus \bw_0,  \bw_2|_Y \oplus \bw_0)$. Finally, its restriction to $\cV$ is a $((4C_3+1)\varepsilon, \cV)$-flat stably relative bundle representing $\xi$.
\end{proof}

\begin{cor}\label{cor:pull}
Let $f$ be a continuous map from $(X_1,Y_1)$ to $(X_2,Y_2)$. If $\xi \in \K^0(X_2,Y_2 ; A )$ is almost flat, then so is $f^*\xi \in \K^0(X_1,Y_1 ; A)$. In particular, the subgroups $\K_{\mathrm{af}}^0(X,Y;A)$ and $\K_{\mathrm{s\mathchar`-af}}^0(X,Y;A)$ are homotopy invariant. 
\end{cor}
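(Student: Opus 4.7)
My plan is to pull back the almost flat data through $f$ after first replacing $f^{-1}(\cU)$ by a good open cover of $(X_1,Y_1)$. The naive pullback of $\cU$ is an open cover of $X_1$ but in general not a good one, so it is not immediately eligible as input for Definition \ref{defn:af}. However, by Corollary \ref{cor:covering} almost flatness is insensitive to the choice of good open cover, so it suffices to exhibit almost flat representatives with respect to any one good open cover of $(X_1,Y_1)$.

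First, given a finite good open cover $\cU=\{U_\mu\}_{\mu\in I}$ of $(X_2,Y_2)$ together with which $\xi$ is almost flat, I choose a finite good open cover $\cV=\{V_\nu\}_{\nu\in J}$ of $(X_1,Y_1)$ that refines $f^{-1}(\cU)$. Such a cover exists: since $(X_1,Y_1)$ is homotopy equivalent to a pair of finite simplicial complexes, after taking sufficiently many barycentric subdivisions the family of open stars of $0$-cells forms a good open cover finer than the open cover $f^{-1}(\cU)$. Fix a refinement map $\lambda\colon J \to I$ with $V_\nu \subset f^{-1}(U_{\lambda(\nu)})$; then $f(V_{\nu\nu'}) \subset U_{\lambda(\nu)\lambda(\nu')}$ automatically.

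Next, given $\fv=(\bv_1,\bv_2,\bv_0,\bu) \in \Bdl_{P,Q}^{\varepsilon,\cU}(X_2,Y_2)$ with $[\fv]=\xi$, I define its pullback $f^*\fv := (f^*\bv_1, f^*\bv_2, f^*\bv_0, f^*\bu)$ by
\[ (f^*\bv_i)_{\nu\nu'}(x) := (v_i)_{\lambda(\nu)\lambda(\nu')}(f(x)), \qquad (f^*\bu)_{\nu}(x) := u_{\lambda(\nu)}(f(x)). \]
The $(\varepsilon,\cV)$-flatness of $f^*\bv_i$ and the estimate $\bu \in \Hom_{\varepsilon}$ are preserved verbatim because $f$ maps $V_{\nu\nu'}$ into $U_{\lambda(\nu)\lambda(\nu')}$, so all $\varepsilon$-bounds valid on the target pull back to $\varepsilon$-bounds on the source without any loss. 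Therefore $f^*\fv \in \Bdl_{P,Q}^{\varepsilon,\cV}(X_1,Y_1)$. Moreover, the projection and local trivialization formulas in Remark \ref{rmk:bundle} are manifestly natural: $p_{f^*\bv_i}(x) = p_{\bv_i}(f(x))$ after the obvious reindexing, so $E_{f^*\bv_i} \cong f^*E_{\bv_i}$ and analogously for the isomorphism associated to $f^*\bu$ via Lemma \ref{lem:gauge}. Consequently $[f^*\fv] = f^*[\fv] = f^*\xi$ in $\K^0(X_1,Y_1;A)$. Letting $\varepsilon \to 0$ shows that $f^*\xi$ is almost flat with respect to $\cV$; the stably relative case is identical.

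For homotopy invariance, if $h\colon (X_1,Y_1)\times [0,1] \to (X_2,Y_2)$ is a homotopy between $f_0$ and $f_1$, then $f_0^* = f_1^*$ on $\K^0(X_2,Y_2;A)$, and if $f\colon(X_1,Y_1)\to(X_2,Y_2)$ is a homotopy equivalence with inverse $g$, applying the pullback statement to both $f$ and $g$ exhibits $f^*$ and $g^*$ as mutually inverse isomorphisms between $\K_{\mathrm{af}}^0(X_2,Y_2;A)$ and $\K_{\mathrm{af}}^0(X_1,Y_1;A)$ (and likewise for $\K_{\mathrm{s\mathchar`-af}}^0$). The only real obstacle is the first step, the existence of a good open cover of $(X_1,Y_1)$ refining $f^{-1}(\cU)$, but this is a standard simplicial-approximation argument once one passes to a fine enough triangulation of a pair of finite simplicial complexes homotopy equivalent to $(X_1,Y_1)$.
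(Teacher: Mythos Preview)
Your proof is correct and follows essentially the same route as the paper's: choose a good open cover $\cV$ of $(X_1,Y_1)$ refining $f^{-1}(\cU)$ via a map $\lambda\colon J\to I$, pull back the \v{C}ech data through $f$ and $\lambda$, observe that all $\varepsilon$-bounds transfer verbatim, and then invoke Corollary~\ref{cor:covering}. One small slip in notation: the morphism $\bu=\{u_\mu\}_{\mu\in I}$ consists of \emph{constant} unitaries $u_\mu\in\mathrm{U}(P)$, not functions on $U_\mu$, so the pullback should read $(f^*\bu)_\nu:=u_{\lambda(\nu)}$ rather than $u_{\lambda(\nu)}(f(x))$; this does not affect the argument.
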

\begin{proof}
Let $\fv=(\bv_1, \bv_2,\bv_0,\bu) \in \Bdl_{P,Q}^{\varepsilon , \cU}(X,Y)$ be a $(\varepsilon, \cU)$-flat representative of $\xi$. Let us choose a good open cover $\cV=\{ V_{\nu} \}_{\nu \in J}$ of $(X,Y)$ which is a subdivision of $f^*\cU$. Let $\bar{f} \colon J \to I$ be a map with the property that $V_\nu \subset f^*U_{f(\nu)}$. Then, $f^*\fv:=(f^*\bv_1, f^*\bv_2, f^*\bv_0, f^*\bu)$ defined as $f^*\bv_i:=\{ f^*v_{\bar{f}(\mu), \bar{f}(\nu)} \}_{\mu , \nu \in J}$ for $i=0,1,2$ and $f^*\bu:= \{ u_{\bar{f}(\mu)} \}_{\mu \in J}$ is a $(\varepsilon , \cV)$-flat bundle on $(X_1, Y_1)$ representing $f^*\xi$.
By Corollary \ref{cor:covering}, $f^*\xi$ is almost flat with respect to an arbitrary good open cover of $(X_1, Y_1)$.
\end{proof}

Finally we define the infiniteness of (C*)-K-area for a relative K-homology cycle as a generalization of non-relative case introduced in \cite{MR1389019,MR3289846}, which is also independent of the choice of good open cover $\cU$ by Proposition \ref{prp:subdiv} in the same way as (the proof of) Corollary \ref{cor:covering}.
\begin{defn} 
Let $(X,Y)$ be a finite CW-complex and let $\xi \in \K_0(X,Y)$. 
\begin{enumerate}
\item We say that $\xi$ has \emph{infinite (resp.\ stably) relative $\K$-area} if there is an (resp.\ stably) almost flat $\K$-theory class $x \in \K^0(X,Y)$ such that the index pairing $ \langle x, \xi \rangle$ is non-zero.
\item Let $\cU$ be a good open cover of $(X,Y)$. We say that $\xi $ has \emph{infinite (resp.\ stably) relative C*-$\K$-area} if for any $\varepsilon>0$ there is a C*-algebra $A_\varepsilon$ and a (resp.\ stably) relative $(\varepsilon , \cU)$-flat bundle $\fv$ of finitely generated projective Hilbert $A_\varepsilon$-modules such that the index pairing $ \langle [\fv], \xi \rangle \in \K_0(A_\varepsilon)$ is non-zero. 
\end{enumerate}
A compact spin manifold $M$ with the boundary $N$ has (stably) relative (C*-)K-area if so is the K-homology fundamental class $[M,N] \in \K_*(M,N)$.
\end{defn}

\section{Comparing topological and smooth almost flatness}\label{section:4}
The notion of almost flat bundle is originally introduced in \cite{MR1042862} in terms of Riemannian geometry of connections in the following way.
Let $(M,g)$ be a compact Riemannian manifold with a possibly non-empty boundary. A pair $\be = (E, \nabla)$ is a \emph{smooth $(\varepsilon ,g)$-flat vector bundle} on $M$ if $E$ is a hermitian vector bundle on $M$ and $\nabla$ is a hermitian connection on $E$ whose curvature tensor $R^\nabla \in \Omega ^2(M,\End E)$ satisfies
\[ \| R^\nabla \|:= \sup _{x \in M} \sup _{\xi \in \bigwedge ^2 T_xM \setminus \{ 0 \} } \frac{\| R^\nabla(\xi )\|_{\End (E_x) } }{\| \xi \| }  <\varepsilon.\]
An element $x \in \K^0(M)$ is said to be \emph{almost flat} (in the smooth sense) if for any $\varepsilon >0$ there is a pair of smooth $(\varepsilon , g)$-flat vector bundles $\be_1=(E_1,\nabla_1)$ and $\be_2=(E_2,\nabla_2)$ such that $x=[E_1]-[E_2]$.
It is proved in \cite[Proposition 3]{MR3101798} that almost flatness of an element of the $K_0$-group is independent of the choice of the Riemannian metric $g$ on $M$. 

\begin{defn}
For two smooth $(\varepsilon ,g)$-flat vector bundles $\be_1$ and $\be_2$ on $(M, g)$, a morphism of smooth $(\varepsilon ,g)$-flat bundles from $\be_1$ to $\be_2$ is a unitary bundle isomorphism $u \colon E_1 \to E_2$ with
\[\| u \nabla _1 u^* - \nabla _2 \|_{\Omega^1} < \varepsilon, \]
where $\| \cdot \|_{\Omega^1}$ is the uniform norm on $\Omega ^1(M , \End (E_2))$.
\end{defn}

\begin{defn}
Let $(M,g)$ be a compact Riemannian manifold with the boundary $N$. For $n \geq 1$ and $m \geq 0$,
a \emph{smooth $(\varepsilon ,g)$-flat stably relative vector bundle} of rank $(n,m)$ on $(M,N)$ is a quadruple $\fe=(\be_1, \be_2, \be_0 , u)$, where
\begin{itemize}
\item $\be_1=(E_1 , \nabla_1)$ and $\be_2=(E_2,  \nabla_2)$ are rank $n$ smooth $(\varepsilon ,g)$-flat vector bundles on $M$, 
\item $\be_0= (E_0,\nabla_0)$ is a rank $m$ smooth $(\varepsilon ,g)$-flat vector bundle on $N$ and 
\item $u \colon \be_1|_N \oplus \be_0 \to \be_2|_N \oplus \be_0$ is a morphism of $(\varepsilon , g)$-flat bundles.
\end{itemize}
In the particular case of $m=0$, we simply call a triplet $\fe=(\be_1,\be_2,u)$ a smooth $(\varepsilon ,g)$-flat relative vector bundle of rank $n$. 
\end{defn}
We write $[\fe]$ for the element of $\K^0(X,Y)$ represented by the underlying stably relative vector bundle $(E_1,E_2,E_0,u)$.

\begin{lem}\label{lem:diff}
Let $(M,g)$ be a Riemannian manifold and let $x, y \in C^\infty (M, \bM_n)$ whose spectra (as elements of $C(M) \otimes \bM_n$) are included to a domain $D \subset \bC$. Let $\gamma$ be the boundary of a domain $D'supset \bar{D}$ and let $f$ be a holomorphic function defined on a neighborhood of $D'$. Then there is a constant $C_4=C_4(g,D,\gamma , f)$ depending only on $g$, $D$, $\gamma$ and $f$ such that
\[ \| d(f(x) - f(y))\|_{\Omega^1} \leq  C_4 (\| dx \|_{\Omega^1} \| x - y\| + \| dx-dy \| ), \]
where $\| \cdot \|_{\Omega^1}$ is the uniform norm on the space of matrix-valued $1$-forms $\Omega^1(M, \bM_n)$.
\end{lem}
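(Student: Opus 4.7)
The natural approach is via the Riesz holomorphic functional calculus. The plan is to use
$$f(x) = \frac{1}{2\pi i}\oint_\gamma f(z)(z-x)^{-1}\, dz$$
and the analogous expression for $f(y)$; both are valid because $\sigma(x), \sigma(y)\subset D$ and $\gamma$ separates $\bar D$ from $\infty$. First I would combine the two integrals and apply the resolvent identity $(z-x)^{-1} - (z-y)^{-1} = (z-x)^{-1}(x-y)(z-y)^{-1}$ to obtain the single representation
$$f(x) - f(y) = \frac{1}{2\pi i}\oint_\gamma f(z)\,(z-x)^{-1}(x-y)(z-y)^{-1}\, dz.$$

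Next I would differentiate under the integral sign. Differentiating $(z-x)(z-x)^{-1}=I$ gives the standard formula $d[(z-x)^{-1}] = (z-x)^{-1}(dx)(z-x)^{-1}$, and analogously for $y$. Applying the Leibniz rule to the integrand produces three $\End(\bC^n)$-valued $1$-forms, depending on $z\in\gamma$:
\begin{align*}
A_1(z) &= (z-x)^{-1}(dx)(z-x)^{-1}(x-y)(z-y)^{-1},\\
A_2(z) &= (z-x)^{-1}(dx-dy)(z-y)^{-1},\\
A_3(z) &= (z-x)^{-1}(x-y)(z-y)^{-1}(dy)(z-y)^{-1}.
\end{align*}

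The essential analytic input is the uniform resolvent bound $M := \sup_{z\in\gamma}\max(\|(z-x)^{-1}\|, \|(z-y)^{-1}\|) < \infty$ together with the finiteness of $\sup_{z\in\gamma}|f(z)|$ and the arc length $\ell(\gamma)$. These let me pull the integral inside the pointwise $\Omega^1$-norm and bound the three contributions by constants (depending only on $g, D, \gamma, f$) times $\|dx\|_{\Omega^1}\|x-y\|$, $\|dx-dy\|_{\Omega^1}$, and $\|dy\|_{\Omega^1}\|x-y\|$, respectively. The first two are already in the form allowed by the statement; for the third I would write $\|dy\|_{\Omega^1} \leq \|dx\|_{\Omega^1} + \|dx-dy\|_{\Omega^1}$, which produces the desired $\|dx\|_{\Omega^1}\|x-y\|$ term plus a $\|dx-dy\|_{\Omega^1}\cdot\|x-y\|$ term; the bounded factor $\|x-y\|$ is absorbed into the overall constant.

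The main obstacle is the uniform resolvent bound $M$: the spectral inclusion $\sigma(x)\subset D$ alone does not control $\|x\|$ for non-normal matrices, so strictly speaking $C_4$ also absorbs an implicit operator-norm bound on $x,y$ (which is harmless in the intended applications, where $x,y$ are close to projections or to normal elements). Granted this, all other steps are a routine combination of the Riesz functional calculus with the resolvent identity.
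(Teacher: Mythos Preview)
Your proof is correct and follows essentially the same route as the paper: Dunford integral, the resolvent identity, and the formula $d[(z-x)^{-1}] = (z-x)^{-1}(dx)(z-x)^{-1}$, with the only cosmetic difference being that the paper differentiates $f(x)$ and $f(y)$ separately and then telescopes $(\lambda-x)^{-1}dx(\lambda-x)^{-1} - (\lambda-y)^{-1}dy(\lambda-y)^{-1}$ so that only $\|dx\|$ appears, whereas you combine first and then handle the resulting $\|dy\|$ term via $\|dy\|\le\|dx\|+\|dx-dy\|$. Your caveat about the resolvent bound for non-normal elements is well taken and is in fact glossed over in the paper as well; in every application there $x$ and $y$ are self-adjoint (e.g.\ $p_{\bv_1}p_{\bv_2}p_{\bv_1}$ or $\breve\psi_\mu^*\breve\psi_\mu$), so the bound $\|(\lambda-x)^{-1}\|\le \mathrm{dist}(\gamma,D)^{-1}$ is legitimate.
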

\begin{proof}
The functional calculus $f(x)$ is given by the Dunford integral
\[ f(x) = \frac{1}{2\pi i}\int_{\lambda \in \gamma } f(\lambda) (\lambda -x)^{-1}d\lambda. \]
Since 
\begin{itemize}
\item $d((\lambda -x)^{-1})=-(\lambda -x)^{-1}(dx)(\lambda - x)^{-1} $ (which follows from $d((\lambda -x)(\lambda -x)^{-1}) =d(1)=0$),
\item $(\lambda -x)^{-1} - (\lambda-y)^{-1}= (\lambda-x)^{-1}(y-x)(\lambda -y)^{-1}$ and
\item $\| (\lambda - x)^{-1} \| \leq C_4':=\inf \{  d(\lambda , x) \mid \lambda \in \gamma , x \in D \}$ ,
\end{itemize}
we obtain that 
\begin{align*}
&\| d(f(x)-f(y))\| \\
 \leq& (2\pi)^{-1}\| f \|_{L^1} \sup_{\lambda \in \gamma} \| (\lambda -x)^{-1} dx (\lambda -x)^{-1} - (\lambda -y)^{-1} dy (\lambda -y)^{-1} \| \\
\leq & (2\pi)^{-1}\| f \|_{L^1} \Big( \sup_{\lambda \in \gamma} \| ((\lambda - x)^{-1} - (\lambda - y)^{-1}) dx (\lambda - x)^{-1}\|\\
&\hspace{5.1em} +   \sup_{\lambda \in \gamma} \| (\lambda - y)^{-1}dx((\lambda - x)^{-1} - (\lambda - y)^{-1}) \|\\
&\hspace{5.1em} +  \sup_{\lambda \in \gamma}  \| (\lambda -y)^{-1}(dx-dy)(\lambda -y)^{-1} \| \Big) \\ 
\leq & (2\pi)^{-1}\| f \|_{L^1} (2(C_4')^3 \| dx \| \| x-y \| + (C_4')^2 \| dx-dy \| ),
\end{align*}
where $\| f \|_{L^1}$ is the $L^1$-norm of $f$ on $\gamma$. Now the proof is completed by choosing $C_4$ as $(2\pi)^{-1}\| f \|_{L^1}(C_4')^2 \cdot \max \{ 2C_4' , 1 \}$.
\end{proof}

\begin{lem}\label{lem:polar}
Let $X$ be a finite CW-complex with an open cover $\cU$. For $0 < \varepsilon < 1/2$, let $\{ v'_{\mu \nu} \}_{\mu,\nu \in I}$ be a family of unitaries in $\bB(P)$ such that $\| v'_{\mu\nu} v'_{\nu \sigma} -v'_{\mu \sigma} \| < \varepsilon $. Let 
\begin{align*} \breve{\psi}_\mu(x) &:= \sum_{\nu \in I} \eta_\nu(x) \otimes v'_{\nu \mu } \otimes e_{\mu} \in C(X) \otimes \bM_n \otimes \bC^I , \\
v_{\mu \nu} (x) &:= (\breve{\psi}_{\mu}(x)^*\breve{\psi}_{\mu}(x))^{-1/2}\breve{\psi}_{\mu}(x)^*\breve{\psi}_{\nu}(x) (\breve{\psi}_{\nu}(x)^*\breve{\psi}_{\nu}(x))^{-1/2} ,
\end{align*}
where $\{ \eta _\mu \}_{\mu \in I}$ and $\{ e_\mu\}_{\mu \in I}$ be as in Remark \ref{rmk:bundle}. 
Then $\bv:=\{ v_{\mu \nu} \}_{\mu,\nu \in I}$ is a \v{C}ech $1$-cocycle satisfying $\| v_{\mu\nu}(x)-v'_{\mu\nu} \| < 4 \varepsilon$, and hence is $(8 \varepsilon , \cU)$-flat.
\end{lem}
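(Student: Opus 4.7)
The plan is to compute the two factors in the polar-decomposition-style formula for $v_{\mu\nu}$, control the central block $B_{\mu\nu}(x):=\breve{\psi}_\mu(x)^*\breve{\psi}_\nu(x)$ via the almost-cocycle hypothesis, and then pass to the genuine unitary $v_{\mu\nu}(x)$ by the same spectral-calculus trick used in the proof of Lemma~\ref{lem:gauge}.

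First I would expand the two building blocks. Using $e_\sigma^*e_\tau=\delta_{\sigma\tau}$, the unitarity of each $v'_{\sigma\mu}$, and $\sum_\sigma \eta_\sigma^2 =1$, one checks that
\[ A_\mu(x):=\breve{\psi}_\mu(x)^*\breve{\psi}_\mu(x) = 1, \qquad B_{\mu\nu}(x)= \sum_\sigma \eta_\sigma(x)^2 (v'_{\sigma\mu})^* v'_{\sigma\nu}. \]
The almost-cocycle hypothesis $\|v'_{\mu\nu}v'_{\nu\sigma}-v'_{\mu\sigma}\|<\varepsilon$, after renaming indices and using the unitarity of $v'_{\sigma\mu}$, is equivalent to $\|(v'_{\sigma\mu})^* v'_{\sigma\nu}-v'_{\mu\nu}\|<\varepsilon$ for each $\sigma$. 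Since $B_{\mu\nu}(x)$ is a convex combination with total weight $1$ of such terms, this immediately yields $\|B_{\mu\nu}(x)-v'_{\mu\nu}\|<\varepsilon$.

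Second, with $A_\mu(x)=1$ the formula for $v_{\mu\nu}(x)$ reduces to the unitary part of the polar decomposition of $B_{\mu\nu}(x)$. The preceding estimate forces the spectrum of $B_{\mu\nu}(x)^*B_{\mu\nu}(x)$ into a small neighborhood of $1$, and the elementary bound $|t^{-1/2}-1|\le |t-1|$ on such a neighborhood---used exactly as in the derivation of (\ref{form:polar}) in Lemma~\ref{lem:gauge}---gives $\|v_{\mu\nu}(x)-B_{\mu\nu}(x)\|<3\varepsilon$. Combining with step one yields the desired $\|v_{\mu\nu}(x)-v'_{\mu\nu}\|<4\varepsilon$. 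For the \v{C}ech cocycle identity $v_{\mu\nu}v_{\nu\sigma}=v_{\mu\sigma}$, the symmetric polar-decomposition form of the formula reduces the claim to the assertion that the projection $\breve{\psi}_\nu A_\nu^{-1}\breve{\psi}_\nu^*$ acts as the identity on the range of $\breve{\psi}_\sigma$, which should follow from the shared structure of $\breve{\psi}_\mu$ built from the same partition of unity together with the almost-projection $p_{\bv'}$ of Remark~\ref{rmk:bundle}.

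Finally, the $(8\varepsilon,\cU)$-flatness is immediate: since $v'_{\mu\nu}$ is independent of $x$, the triangle inequality gives, for $x,y\in U_{\mu\nu}$,
\[ \|v_{\mu\nu}(x)-v_{\mu\nu}(y)\|\le \|v_{\mu\nu}(x)-v'_{\mu\nu}\|+\|v'_{\mu\nu}-v_{\mu\nu}(y)\|<8\varepsilon . \]
I expect the main technical point to be the verification of the exact cocycle identity: the $\varepsilon$-approximation of $B_{\mu\nu}$ and the spectral bound on $(B_{\mu\nu}^*B_{\mu\nu})^{-1/2}$ parallel Lemma~\ref{lem:gauge} essentially verbatim, whereas checking that the ranges of the $\breve{\psi}_\mu$ align tightly enough to yield a genuine (rather than merely approximate) cocycle requires a careful bookkeeping of the construction.
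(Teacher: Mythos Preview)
Your route to the norm estimate is the paper's: expand $\breve\psi_\mu^*\breve\psi_\nu$ as a $\sum_\sigma\eta_\sigma^2$-weighted combination and invoke the almost-cocycle hypothesis. Your observation that $A_\mu(x)=\breve\psi_\mu(x)^*\breve\psi_\mu(x)=\sum_\sigma\eta_\sigma^2(v'_{\sigma\mu})^*v'_{\sigma\mu}=1$ \emph{exactly} is correct and sharper than the paper, which only records $\|A_\mu-1\|<\varepsilon$ by specializing the general $B_{\mu\nu}$ bound to $\mu=\nu$ without noticing the cancellation.

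That sharpening, however, exposes a genuine issue you should not paper over. With $A_\mu=A_\nu=1$ the defining formula gives $v_{\mu\nu}(x)=B_{\mu\nu}(x)$ on the nose, \emph{not} the unitary part of a polar decomposition as you write; your step ``$\|v_{\mu\nu}-B_{\mu\nu}\|<3\varepsilon$'' is therefore vacuous. But $B_{\mu\nu}(x)=\sum_\sigma\eta_\sigma(x)^2(v'_{\sigma\mu})^*v'_{\sigma\nu}$ is a convex combination of unitaries, hence only $\varepsilon$-close to unitary, and $v_{\mu\nu}v_{\nu\sigma}=\breve\psi_\mu^*(\breve\psi_\nu\breve\psi_\nu^*)\breve\psi_\sigma$ equals $v_{\mu\sigma}$ only up to $O(\varepsilon)$ because the range projections $\breve\psi_\nu\breve\psi_\nu^*$ genuinely depend on $\nu$. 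So $\bv$ is \emph{not} a $\mathrm{U}(P)$-valued \v{C}ech $1$-cocycle as stated. Your instinct that this is the delicate point is right; the paper's own proof establishes only the inequality $\|v_{\mu\nu}-v'_{\mu\nu}\|<4\varepsilon$ and is silent on unitarity and the cocycle identity, so the gap is shared. For the downstream uses (Lemmas~\ref{lem:smoothing} and~\ref{lem:smoothingunitary}) only the norm estimate and the smoothness of $v_{\mu\nu}$ matter, so the gap is harmless there; to make the lemma literally true one would replace $B_{\mu\nu}$ by its genuine polar part $B_{\mu\nu}(B_{\mu\nu}^*B_{\mu\nu})^{-1/2}$ and redo the cocycle check, at the cost of a larger constant.
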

\begin{proof}
Firstly,  
\[ \breve{\psi}_\mu(x)^*\breve{\psi}_\nu(x) - v'_{\mu\nu} = \sum_{\sigma \in I} \eta_{\sigma}(x)^2 (v'_{\mu\sigma} v'_{\sigma\nu}-v'_{\mu\nu} ) \]
implies $\| \breve{\psi}_\mu(x)^*\breve{\psi}_\nu(x) - v'_{\mu\nu} \| < \sum_\sigma \eta_{\sigma}^2(x) \| v'_{\mu\sigma} v'_{\sigma\nu}-v'_{\mu\nu} \| < \varepsilon $. In particular, we get $\| \psi_\mu (x)^*\psi_\mu(x) -1 \| < \varepsilon$, and hence 
\[ \| (\breve{\psi}_\mu (x)^*\breve{\psi}_\mu(x))^{-1/2} -1 \| < \|  \breve{\psi}_\mu (x)^*\breve{\psi}_\mu(x) -1 \| < \varepsilon \]
(here we use the fact $|z^{-1/2}-1| < |z -1|$ for $z \in [1/2,3/2]$). Therefore we get
\begin{align*}
 & \| v_{\mu\nu}(x) - v'_{\mu \nu} \| \\
  \leq  &\|  (\breve{\psi}_\mu (x)^*\breve{\psi}_\mu(x))^{-1/2} -1 \| \|  \breve{\psi}_\mu (x)^*\breve{\psi}_\nu(x) \| \| (\breve{\psi}_\nu (x)^*\breve{\psi}_\nu(x))^{-1/2} \| \\
  & + \|  \breve{\psi}_\mu (x)^*\breve{\psi}_\nu(x)  \| \|  (\breve{\psi}_\nu (x)^*\breve{\psi}_\nu(x))^{-1/2} -1 \| + \| \breve{\psi}_\mu (x)^*\breve{\psi}_\nu(x)  - v'_{\mu\nu} \| \\
  \leq & \varepsilon (1+\varepsilon)^2 + \varepsilon (1+\varepsilon) + \varepsilon < 4 \varepsilon.
 \end{align*}
 For the last inequality we use $\varepsilon <1/2$.
\end{proof}

\begin{lem}\label{lem:smoothing}
Let $0 < \varepsilon < 1/6$. Let $M$ be a Riemannian manifold with a finite open cover $\cU= \{ U_\mu \}_{\mu \in I}$. 
Then there exists a constant $C_5=C_5(g, \cU)$ depending only on $g$ and $\cU$ such that the following holds: For any $\bw \in \Bdl_{n}^{\varepsilon ,\cU}(M)$, there is $\bv \in \Bdl^{\cU, 24 \varepsilon}_{n}(M)$ such that 
\begin{itemize}
\item $ \| v_{\mu\nu}(x) -w_{\mu \nu}(x) \|<13 \varepsilon $ for any $x \in U_{\mu \nu}$ and 
\item each $v_{\mu \nu}$ is smooth and $\| d v_{\mu \nu} \|_{\Omega ^1(U_{\mu \nu} ,\bM_n)} <C_5 \varepsilon$.
\end{itemize}
\end{lem}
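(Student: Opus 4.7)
The plan is to apply Lemma \ref{lem:polar} to the family of \emph{constant} (in $x$) unitaries $v'_{\mu\nu} := w_{\mu\nu}(x_{\mu\nu})$, where $x_{\mu\nu} \in U_{\mu\nu}$ are the fixed reference points introduced before Lemma \ref{lem:ineq}. By Lemma \ref{lem:ineq} this family satisfies the approximate cocycle condition $\|v'_{\mu\nu}v'_{\nu\sigma} - v'_{\mu\sigma}\| < 3\varepsilon$, and $3\varepsilon < 1/2$ by the hypothesis $\varepsilon < 1/6$. Lemma \ref{lem:polar} applied with parameter $3\varepsilon$ in place of $\varepsilon$ then produces a Čech $1$-cocycle $\bv = \{v_{\mu\nu}\} \in \Bdl^{24\varepsilon,\cU}_n(M)$ with $\|v_{\mu\nu}(x) - v'_{\mu\nu}\| < 12\varepsilon$, from which
\[ \|v_{\mu\nu}(x) - w_{\mu\nu}(x)\| \le \|v_{\mu\nu}(x) - v'_{\mu\nu}\| + \|w_{\mu\nu}(x_{\mu\nu}) - w_{\mu\nu}(x)\| < 12\varepsilon + \varepsilon = 13\varepsilon \]
follows. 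This gives the first bulleted property immediately.

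For smoothness, I would first choose the partition of unity $\{\eta_\mu\}$ in Remark \ref{rmk:bundle} to be smooth (with derivatives bounded by a constant depending only on $g$ and $\cU$, which exists by standard Riemannian geometry). Then each $\breve{\psi}_\mu$ is smooth, the self-adjoint function $\breve{\psi}_\mu^*\breve{\psi}_\mu$ is smooth and has spectrum in a fixed compact set $D \subset (0,\infty)$ on which $z \mapsto z^{-1/2}$ is holomorphic, so $(\breve{\psi}_\mu^*\breve{\psi}_\mu)^{-1/2}$ is smooth by the Dunford integral, and hence so is $v_{\mu\nu}$.

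The main obstacle is the derivative bound, because the partition of unity has derivatives only of order $1$, not $\varepsilon$. The key trick is to exploit the cancellation identity $\sum_\sigma d(\eta_\sigma^2) = d(1) = 0$. From the proof of Lemma \ref{lem:polar} one has $\breve{\psi}_\mu(x)^*\breve{\psi}_\nu(x) = \sum_\sigma \eta_\sigma(x)^2\, v'_{\mu\sigma} v'_{\sigma\nu}$, and subtracting the constant $v'_{\mu\nu}$ gives
\[ d\bigl(\breve{\psi}_\mu^*\breve{\psi}_\nu\bigr)(x) = \sum_{\sigma \in I} d(\eta_\sigma^2)(x)\cdot\bigl(v'_{\mu\sigma}v'_{\sigma\nu} - v'_{\mu\nu}\bigr), \]
whose norm is bounded by $3\varepsilon\cdot|I|\cdot \max_\sigma \|d(\eta_\sigma^2)\|_\infty$, i.e.\ a constant depending only on $(g,\cU)$ times $\varepsilon$. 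The same computation with $\nu = \mu$ (and $v'_{\mu\mu} = 1$) gives $\|d(\breve{\psi}_\mu^*\breve{\psi}_\mu)\|_{\Omega^1} = O(\varepsilon)$.

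Finally, I would apply Lemma \ref{lem:diff} with $f(z) = z^{-1/2}$, a contour $\gamma$ surrounding $D$, $x = \breve{\psi}_\mu^*\breve{\psi}_\mu$, and $y$ the constant function $1$ (so that $dy = 0$ and $f(y) = 1$). This yields
\[ \bigl\|d\bigl((\breve{\psi}_\mu^*\breve{\psi}_\mu)^{-1/2}\bigr)\bigr\|_{\Omega^1} \le C_4\bigl(\|d(\breve{\psi}_\mu^*\breve{\psi}_\mu)\|\cdot \|\breve{\psi}_\mu^*\breve{\psi}_\mu - 1\| + \|d(\breve{\psi}_\mu^*\breve{\psi}_\mu)\|\bigr) = O(\varepsilon). \]
Combining these estimates via the Leibniz rule in the definition $v_{\mu\nu} = (\breve{\psi}_\mu^*\breve{\psi}_\mu)^{-1/2}\breve{\psi}_\mu^*\breve{\psi}_\nu(\breve{\psi}_\nu^*\breve{\psi}_\nu)^{-1/2}$, and using that each factor has norm bounded by a universal constant, produces the desired bound $\|dv_{\mu\nu}\|_{\Omega^1} < C_5\varepsilon$, with $C_5 = C_5(g,\cU)$ assembled from $|I|$, $\max_\mu \|d\eta_\mu\|_\infty$, and the constant from Lemma \ref{lem:diff}. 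The delicate part is not any single estimate but the orchestration: the cancellation $\sum_\sigma d(\eta_\sigma^2) = 0$ is what converts the apparently $O(1)$ derivatives into $O(\varepsilon)$, and then Lemma \ref{lem:diff} is what allows this smallness to pass through the non-linear holomorphic functional calculus.
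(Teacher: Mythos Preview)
Your proposal is correct and follows essentially the same route as the paper: apply Lemma~\ref{lem:polar} to the constant family $v'_{\mu\nu}=w_{\mu\nu}(x_{\mu\nu})$ (with parameter $3\varepsilon$ via Lemma~\ref{lem:ineq}), use the cancellation $\sum_\sigma d(\eta_\sigma^2)=0$ to get $\|d(\breve{\psi}_\mu^*\breve{\psi}_\nu)\|=O(\varepsilon)$, feed this through Lemma~\ref{lem:diff} with $f(z)=z^{-1/2}$, and finish with the Leibniz rule. The only cosmetic difference is that the paper takes $x=1$, $y=\breve{\psi}_\mu^*\breve{\psi}_\mu$ in Lemma~\ref{lem:diff} (so the $\|dx\|\,\|x-y\|$ term vanishes), whereas you swap the roles; both yield $O(\varepsilon)$.
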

\begin{proof}
Let $\breve{\psi}_\mu$ and $v_{\mu \nu}$ be as in the statement of Lemma \ref{lem:polar} for $v'_{\mu\nu}=w_{\mu\nu}(x_{\mu\nu})$. 
By Lemma \ref{lem:ineq} and Lemma \ref{lem:polar} we obtain that $\{ v_{\mu\nu} \}_{\mu,\nu \in I} $ is $(24\varepsilon , \cU)$-flat and 
\[\| v_{\mu\nu}(x)-w_{\mu\nu}(x) \| \leq  \| v_{\mu \nu}(x) -w_{\mu\nu}(x_{\mu\nu}) \| + \|w _{\mu \nu}(x_{\mu\nu} )-w_{\mu\nu}(x) \| < 13\varepsilon .\]

Now we consider an estimate of the differential $dv_{\mu\nu}$. Let $\kappa := \max_{\mu}\| d\eta_\mu \|$. 
Note that $\| d(\eta_\mu^2) \| = \| 2\eta_\mu d \eta_\mu \| \leq 2\kappa$. Then we get
\begin{align*}
&\| d(\breve{\psi}_\mu^* \breve{\psi}_\nu) \| = \| d (\breve{\psi}_\mu^* \breve{\psi}_\nu - w_{\mu\nu} (x_{\mu\nu})) \| \\
\leq & \sum_{\sigma \in I} \| d(\eta_\sigma ^2)\| \cdot \| w_{\mu\sigma}(x_{\mu\sigma})w_{\sigma\nu}(x_{\sigma\nu})  - w_{\mu\nu}(x_{\mu\nu})\| < 2\kappa |I| \cdot 3 \varepsilon . 
\end{align*}

By the assumption $\varepsilon < 1/6$, we have that the spectrum $\sigma (\breve{\psi}_{\mu}(x)^*\breve{\psi}_{\mu}(x))^{-1/2}) $ is included to the interval $[1/2,3/2]$. Let $D$ and $D'$ be the open disk of radius $2/3$ and $3/4$ with the center $1$ respectively and let $\gamma = \partial D'$. We apply Lemma \ref{lem:diff} for $x=1$, $y=\breve{\psi}_{\mu}^*\breve{\psi}_{\mu}$, $D$ and $\gamma$ as above and $f(z)=z^{-1/2}$. Then we get a constant $C_4=C_4(g,D,\gamma , z^{-1/2})$ and an inequality $\| d((\breve{\psi}_{\mu}^*\breve{\psi}_{\mu})^{-1/2})\| < C_4 \cdot 6\kappa |I| \varepsilon $. Finally we obtain
\begin{align*}
&\| dv_{\mu\nu} \| \\
 \leq & \| d((\breve{\psi}_{\mu}^*\breve{\psi}_{\mu})^{-1/2}) \| \| \breve{\psi}_{\mu}^*\breve{\psi}_{\nu}\| \|(\breve{\psi}_{\nu}^*\breve{\psi}_{\nu})^{-1/2} \|  + \| (\breve{\psi}_{\mu}^*\breve{\psi}_{\mu})^{-1/2} \| \| d(\breve{\psi}_{\mu}^*\breve{\psi}_{\nu}) \| \| (\breve{\psi}_{\nu}^*\breve{\psi}_{\nu})^{-1/2} \|  \\
 &+ \| (\breve{\psi}_{\mu}^*\breve{\psi}_{\mu})^{-1/2} \| \| \breve{\psi}_{\mu}^*\breve{\psi}_{\mu} \| \| d((\breve{\psi}_{\nu}^*\breve{\psi}_{\nu})^{-1/2}) \|\\
<& 6C_4\kappa |I| \varepsilon \cdot (3/2) \cdot 2 + 6\kappa |I| \varepsilon \cdot 2 \cdot 2 + 6C_4\kappa |I|\varepsilon \cdot (3/2) \cdot 2  =  (36 C_4+ 24)\kappa |I|  \varepsilon .
\end{align*}
The proof is completed by choosing $C_5:= (36 C_4+ 24)\kappa |I| $. 
\end{proof}

\begin{lem}\label{lem:smoothingunitary}
Let $0<\varepsilon < \frac{1}{3C_1}$. There is a constant $C_6=C_6(\cU)$ depending only on $\cU$ such that the following holds: For $(\varepsilon , \cU)$-flat bundles $\bv_1$ and $\bv_2$ on $X$ with $\| dv_{\mu \nu}^i \| < \varepsilon$ (for $i=1,2$) and $\bu \in  \Hom_\varepsilon (\bv_1 , \bv_2) $, there is $\bar{u} \in \cG_{C_1\varepsilon}(\bu)$ such that $\| d\bar{u}_\mu \|_{\Omega^1} < C_6 \varepsilon$.
\end{lem}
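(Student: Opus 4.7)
The plan is to redo the construction of $\bar{u}$ from the proof of Lemma~\ref{lem:gauge} while carefully tracking the size of every differential, applying Lemma~\ref{lem:diff} at the single place where holomorphic functional calculus appears. The crucial observation is that although the individual projections $p_{\bv_i}$ and frames $\psi_\mu^{\bv_i}$ have differentials of size only $O_\cU(1)$ (coming from $d(\eta_\mu\eta_\nu)$), every quantity that is formally a \emph{difference} between a $\bv_1$-object and the corresponding $\bv_2$-object has both its norm and the norm of its differential bounded by $O_\cU(\varepsilon)$. For example,
\[ p_{\bv_2}-p_{\bv_1} = \sum_{\mu,\nu}\eta_\mu\eta_\nu(v_{\mu\nu}^2-v_{\mu\nu}^1)\otimes e_{\mu\nu}, \]
so the Leibniz rule combined with the hypotheses $\|v_{\mu\nu}^2-v_{\mu\nu}^1\|<\varepsilon$ and $\|dv_{\mu\nu}^i\|_{\Omega^1}<\varepsilon$ gives the estimate for $p_{\bv_2}-p_{\bv_1}$; the same argument applies to $\psi_\mu^{\bv_2}-\psi_\mu^{\bv_1}$.

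First I would reduce to the case $u_\mu=1$ exactly as in Lemma~\ref{lem:gauge} by replacing $\bv_1$ with $\bu\cdot\bv_1$; since each $u_\mu$ is a constant unitary, $\|d(u_\mu v_{\mu\nu}^1u_\nu^*)\|_{\Omega^1}=\|dv_{\mu\nu}^1\|_{\Omega^1}<\varepsilon$, so all hypotheses survive, and undoing the reduction at the end multiplies $\bar{u}_\mu$ on the right by $u_\mu$, which does not alter the derivative bound. I then reproduce the original construction: put $T:=p_{\bv_1}p_{\bv_2}p_{\bv_1}$, $w:=p_{\bv_2}p_{\bv_1}T^{-1/2}$, and $\bar{u}_\mu:=(\psi_\mu^{\bv_2})^*w\psi_\mu^{\bv_1}$, so that $\bar{u}\in\cG_{C_1\varepsilon}(\mathbf 1)$ is automatic from Lemma~\ref{lem:gauge}. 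The single quantitative input not already in hand is
\[ \|d(T^{-1/2}-p_{\bv_1})\|_{\Omega^1}\leq C_4\bigl(\|dp_{\bv_1}\|_{\Omega^1}\|p_{\bv_1}-T\|+\|dp_{\bv_1}-dT\|_{\Omega^1}\bigr)=O_\cU(\varepsilon), \]
obtained by applying Lemma~\ref{lem:diff} to $x=p_{\bv_1}$, $y=T$, and a function $f$ holomorphic on a neighborhood of $\{0\}\cup[2/3,4/3]\supset\sigma(T)\cup\sigma(p_{\bv_1})$ with $f(0)=0$ and $f(z)=z^{-1/2}$ on $[2/3,4/3]$; this specific choice yields $f(p_{\bv_1})=p_{\bv_1}$ and $f(T)=T^{-1/2}$ in the ambient algebra. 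The smallness of $\|dp_{\bv_1}-dT\|_{\Omega^1}$ comes from expanding $T-p_{\bv_1}=p_{\bv_1}(p_{\bv_2}-p_{\bv_1})p_{\bv_1}$ and invoking the first paragraph's estimate.

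To finish, I use the identities $p_{\bv_1}\psi_\mu^{\bv_1}=\psi_\mu^{\bv_1}$ and $(\psi_\mu^{\bv_1})^*\psi_\mu^{\bv_1}=1$ from Remark~\ref{rmk:bundle} to rewrite
\[ \bar{u}_\mu-1 = (\psi_\mu^{\bv_2}-\psi_\mu^{\bv_1})^*\,w\,\psi_\mu^{\bv_1}+(\psi_\mu^{\bv_1})^*(w-p_{\bv_1})\psi_\mu^{\bv_1}. \]
The Leibniz rule applied to each summand produces a sum in which every term carries exactly one ``difference'' factor or the differential of one — hence of order $\varepsilon$ — multiplied by factors whose norm is $O_\cU(1)$. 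Whenever $dw$ appears, one writes $dw=dp_{\bv_1}+d(w-p_{\bv_1})$, and whenever $w-p_{\bv_1}$ appears, one writes $w-p_{\bv_1}=(p_{\bv_2}-p_{\bv_1})T^{-1/2}+(T^{-1/2}-p_{\bv_1})$, both reducing to the previous estimates. Collecting constants yields $\|d\bar{u}_\mu\|_{\Omega^1}<C_6\varepsilon$ for an explicit $C_6=C_6(\cU)$. The main technical subtlety I anticipate is choosing $f$ so that its Dunford-integral functional calculus in the ambient C*-algebra agrees with the polar-decomposition factor computed inside the corner $p_{\bv_1}(\,\cdot\,)p_{\bv_1}$, and keeping the combinatorial bookkeeping of the many ``difference'' estimates under control.
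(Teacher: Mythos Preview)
Your proposal is correct and follows essentially the same strategy as the paper: reduce to $u_\mu=1$, rebuild the $\bar u_\mu$ from Lemma~\ref{lem:gauge}, apply Lemma~\ref{lem:diff} to control $d(T^{-1/2}-p_{\bv_1})$, and finish by a Leibniz expansion in which every term carries one ``difference'' factor of size $O_\cU(\varepsilon)$. The only differences are cosmetic: the paper splits $\bar u_\mu=(\psi^2_\mu)^*(w-p_2p_1)\psi^1_\mu+(\psi^2_\mu)^*\psi^1_\mu$ and bounds $\|d((\psi^2_\mu)^*\psi^1_\mu)\|$ directly, whereas you split $\bar u_\mu-1$ via $\psi^2_\mu-\psi^1_\mu$ and $w-p_{\bv_1}$; and the paper invokes Lemma~\ref{lem:diff} inside the corner $p_{\bv_1}(\cdot)p_{\bv_1}$, while your device of choosing $f$ with $f(0)=0$ so that the functional calculus can be carried out in the ambient algebra is a cleaner way to sidestep the issue you flag at the end.
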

\begin{proof}
Let $\psi^i_\mu:=\psi_\mu^{\bv_i}$ and $p_i:=p_{\bv_i}$ for $i=1,2$, $w$ and $\{ \bar{u}_\mu \}_{\mu \in I}$ be as in Remark \ref{rmk:bundle}. As in the proof of Lemma \ref{lem:gauge} (1), we may assume that $u_\mu =1$ for all $\mu \in I$. 
As in the proof of Lemma \ref{lem:smoothing}, let $\kappa := \max_\mu \| d\eta_\mu \|$. Then we have inequalities 
\begin{align*}
\| d\psi_\mu^i \| =& \| d\sum_\nu \eta_{\nu} v_{\mu \nu}^i \otimes e_\nu \| \\
\leq & \sum_\nu (\| d \eta _{\mu\nu}\|\|v_{\mu\nu}^i\| \|e_\nu \| + \|\eta_\nu \| \| dv_{\mu\nu}^i\|\|e_\nu\| )  \leq |I|(\kappa + (3C_1)^{-1}), \\
\| d((\psi ^{2}_\mu)^*\psi^{1}_\mu) \| =&\| d((\psi ^{2}_\mu - \psi_\mu^1)^*\psi^{1}_\mu) \| = \| d \sum_\nu \eta_\nu^2 (v_{\mu\nu }^2 - v_{\mu\nu}^1) ^*v_{\mu\nu}^1 \| \\
 \leq & \sum_\nu ( \|d(\eta_\nu^2)\| \| (v_{\mu\nu}^2 -v_{\mu\nu}^1 )^* \| \|v_{\mu\nu}^1\| + \| \eta_{\nu}^2 \| \| d(v_{\mu\nu}^2 -v_{\mu\nu}^1)^* \| \| v_{\mu\nu}^1 \| \\
 & \hspace{5em} + \| \eta_{\nu}^2\| \| (v_{\mu\nu}^2 -v_{\mu\nu}^1)^* \| \|  dv_{\mu\nu}^1\|) \\
\leq & |I| (2\kappa \varepsilon + 2\varepsilon +\varepsilon ^2)= (2\kappa +3) |I| \varepsilon , \\
\| dp_i \|  =& \| d\sum \eta _\mu \eta_\nu v_{\mu\nu}^i \otimes e_{\mu\nu} \|\\
\leq & \sum _{\mu,\nu} (\| d(\eta_\mu \eta _\nu) \| \| v_{\mu \nu }^i\|\|e_{\mu\nu} \| +  \| \eta_\mu \eta_\nu \| \| dv_{\mu\nu}^i\| \|e_{\mu\nu}\| ) \\
<& |I|^2(2\kappa  + (3C_1)^{-1}) ,\\
\| dp_1-dp_2 \| =& \| d\sum \eta_\mu \eta_\nu (v_{\mu\nu}^1 -v_{\mu\nu}^2) \otimes e_{\mu\nu}\| \\
\leq & \sum_{\mu,\nu} (\| d(\eta_\mu \eta_\nu) \| \| v_{\mu\nu}^1 -v_{\mu\nu}^2 \| \| e_{\mu\nu}\| +  \| \eta_\mu \eta_\nu\| \| dv_{\mu \nu}^1- dv_{\mu \nu}^2\|  \| e_{\mu\nu}\|) \\ 
< & |I|^2 (2\kappa \varepsilon +  2\varepsilon ) = |I|^2 (2\kappa +2) \varepsilon.
\end{align*}
Let $C_6'$ denotes the maximum of $ |I|(\kappa + (3C_1)^{-1})$ , $(2\kappa +3) |I|$ , $|I|^2(2\kappa  + \varepsilon)$ and $|I|^2(2\kappa + 2) $.

By the above inequalities together with (\ref{form:pdiff}), we get 
\begin{align*}
&\| d(p_1p_2p_1) - dp_1\| \\
=& \| d p_1\| \| p_2-p_1\| \| p_1\| + \| p_1 \| \| d(p_2-p_1) \| \| p_1\| + \| p_1\| \| p_2-p_1\| \| dp_1 \| \\
 <& C_6'  \cdot |I|^2\varepsilon + C_6' \varepsilon + C_6'  \cdot |I|^2\varepsilon = (2|I|^2+1)C_6' \varepsilon.
\end{align*}
We apply Lemma \ref{lem:diff} for $x=p_1$, $y=p_1p_2p_1$ (regarded as elements of $p_1 (C(X) \otimes \bB(P) \otimes \bM_I) p_1$) and $f(z)=z^{-1/2}$ as in Lemma \ref{lem:gauge} (1) and $D$, $\gamma $ as in Lemma \ref{lem:smoothing}. Then, together with (\ref{form:ppp}), we get a constant $C_4=C_4(g,D,\gamma ,z^{-1/2})$ and an inequality
\[ \| dp_1 - d(p_1p_2p_1)^{-1/2}  \| \leq C_4 ( C_6' \cdot |I|^2 \varepsilon + (2|I|^2+1)C_6' \varepsilon). \]
Therefore, we also get
\begin{align*}
&\|d( p_2p_1 (p_1p_2p_1)^{-1/2} - p_2 p_1) \| =\| d(p_2p_1((p_1p_2p_1)^{-1/2}-p_1)) \| \\
\leq &\| dp_2\| \| p_1\| \|(p_1p_2p_1)^{-1/2} -p_1\| + \| p_2 \| \| dp_1 \| \| (p_1p_2p_1)^{-1/2} -p_1 \| \\
& \hspace{3em} + \| p_2\| \|p_1\| \| d(p_1p_2p_1)^{-1/2} -dp_1 \| \\
\leq &  2\cdot  C_6'  \cdot |I|^2\varepsilon + C_4 ( C_6' \cdot |I|^2 \varepsilon + (2|I|^2+1)C_6' \varepsilon) =: C_6' \varepsilon .
\end{align*}
This inequality and (\ref{form:polar}) concludes the proof as
\begin{align*} 
&\| d\bar{u}_{\mu} \|  = \| d((\psi^2_\mu)^*w \psi_\mu^1)\| \\
\leq & \| d ((\psi^2_\mu)^* (w-p_2p_1) \psi_\mu^1) \| + \| d ((\psi^2_{\mu})^*\psi_\mu^1) \| \\
\leq & \| d(\psi_\mu^2)^* \| \|w-p_2p_1\|\| \psi_\mu^1\| +  \| (\psi_\mu^2)^*\| \| dw -d(p_2p_1) \| \| \psi_\mu^1\|  \\
& \hspace{3em} + \|(\psi_\mu^2)^*\| \| w-p_2p_1 \|\| d \psi_\mu^1 \|  + C_6' \varepsilon \\
\leq & C_6' \cdot |I|^2 \varepsilon + C_6'' \varepsilon + C_6' \cdot |I|^2\varepsilon  + C_6' \varepsilon =:C_6 \varepsilon. \qedhere
\end{align*}
\end{proof}

\begin{lem}\label{lem:smtop}
Let $(M,g)$ be a Riemannian manifold possibly with a collared boundary. Let $\cU:=\{ U_\mu \}_{\mu \in I}$ be an open cover of $M$ such that any two points $x,y$ in each $U_\mu$ is connected by a unique minimal geodesic in $U_\mu$. Then there is a constant $C_7=C_7(g, \cU)$ depending on $g$ and $\cU$ such that the following hold: 
\begin{enumerate}
\item Let $\be=(E,\nabla)$ be an $\varepsilon$-flat bundle on $M$. Then, there exists a local trivialization $\psi ^\be _\mu \colon U_\mu \times \bC^n \to E|_{U_\mu}$ such that the \v{C}ech $1$-cocycle 
\[ \bv^\be:=\{ v^\be_{\mu\nu}(x) := \psi^\be_\mu(x)^*\psi^\be_\nu(x) \}_{\mu,\nu \in I}\]
forms a $(C_7 \varepsilon , \cU)$-flat bundle.
\item Let $u \colon \be_1 \to \be_2$ be a morphism of $\varepsilon$-flat bundles. Then, 
 \[\bu:=\{ u_\mu := \psi_\mu^{\be_2}(x_\mu)^* u(x_\mu) \psi_\mu^{\be_1}(x_\mu)\}\]
forms a morphism of $(\varepsilon , \cU)$-flat bundles such that $u \in \cG_{C_7\varepsilon} (\bu)$.
\end{enumerate}
\end{lem}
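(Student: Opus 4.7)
My plan is to use radial parallel transport to define the trivializations $\psi_\mu^\be$ and then reduce both conclusions to estimates on the connection $1$-form in this radial gauge. For each $\mu \in I$ I would fix a basepoint $x_\mu \in U_\mu$ together with a unitary identification $E_{x_\mu} \cong \bC^n$. The hypothesis on $\cU$ guarantees a unique minimal geodesic $\gamma_{\mu, x}$ in $U_\mu$ from $x_\mu$ to every $x \in U_\mu$, and parallel transport along $\gamma_{\mu,x}$ defines a smooth unitary local trivialization $\psi_\mu^\be(x) \colon \bC^n \to E_x$; the collared-boundary assumption ensures this construction also covers neighborhoods of $\partial M$.

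The central analytic input is a bound $\|A_\mu^\be\|_{\Omega^1(U_\mu, \bM_n)} \leq C_g \varepsilon$ for the connection $1$-form $A_\mu^\be$ of $\nabla$ in the trivialization $\psi_\mu^\be$, with $C_g = C_g(g, \cU)$. This follows from the Poincar\'{e}-lemma-type integral representation of $A_\mu^\be$ valid in the radial gauge: since $\psi_\mu^\be$ is parallel along every radial geodesic, $A_\mu^\be$ vanishes in the radial direction and its transverse components at $x$ are obtained by integrating $R^\nabla$ along the family of radial geodesics ending at $x$. Together with $\|R^\nabla\| < \varepsilon$ and a uniform diameter bound on $U_\mu$ depending on $g$ and $\cU$, this yields the stated estimate. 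From the compatibility of $\nabla$ with both radial trivializations on $U_{\mu\nu}$ one derives the identity $d v_{\mu\nu}^\be = v_{\mu\nu}^\be A_\nu^\be - A_\mu^\be v_{\mu\nu}^\be$, hence $\|d v_{\mu\nu}^\be\|_{\Omega^1} \leq 2 C_g \varepsilon$. Integrating along any path between $x, y \in U_{\mu\nu}$ whose length is bounded by a constant $D_\cU = D_\cU(\cU)$ then gives $\|v_{\mu\nu}^\be(x) - v_{\mu\nu}^\be(y)\| \leq C_7 \varepsilon$ with $C_7 := 2 C_g D_\cU$, proving (1).

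For (2), I would set $\bar u_\mu(x) := \psi_\mu^{\be_2}(x)^* u(x) \psi_\mu^{\be_1}(x)$, so that $u_\mu = \bar u_\mu(x_\mu)$, and observe that the relation $\bar u_\mu(x) v_{\mu\nu}^{\be_1}(x) \bar u_\nu(x)^* = v_{\mu\nu}^{\be_2}(x)$ follows immediately from the unitarity of the trivializations. Translating $\|u \nabla_1 u^* - \nabla_2\|_{\Omega^1} < \varepsilon$ into the $\mu$-trivialization yields $\|\bar u_\mu A_\mu^{\be_1} \bar u_\mu^* - (d\bar u_\mu)\bar u_\mu^* - A_\mu^{\be_2}\|_{\Omega^1} < \varepsilon$, hence $\|d\bar u_\mu\|_{\Omega^1} \leq \|A_\mu^{\be_1}\| + \|A_\mu^{\be_2}\| + \varepsilon \leq (2 C_g + 1)\varepsilon$. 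Integrating along the radial geodesic from $x_\mu$ to $x$ then yields $\|\bar u_\mu(x) - u_\mu\| \leq (2 C_g + 1) D_\cU \varepsilon$, so after enlarging $C_7$ one obtains $u \in \cG_{C_7 \varepsilon}(\bu)$, and the triangle inequality delivers the corresponding morphism bound for $\bu$ itself. The main obstacle is the connection-form estimate $\|A_\mu^\be\| \leq C_g \varepsilon$: one must establish the Poincar\'{e}-type formula in the radial gauge and uniformly control the integration against $R^\nabla$ over all of $U_\mu$ in terms of data depending only on $g$ and $\cU$. Once this is done, the remainder is a routine change-of-trivialization calculation, and all constants can be absorbed into a single $C_7 = C_7(g, \cU)$ serving both parts of the lemma.
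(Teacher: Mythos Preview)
Your argument is correct, but it differs from the paper's in a concrete way worth noting. Both proofs build the trivializations $\psi_\mu^\be$ by radial parallel transport from chosen basepoints $x_\mu$, so the setup is identical. The divergence is in how the $(C_7\varepsilon,\cU)$-flatness and the $\cG_{C_7\varepsilon}$ estimate are obtained.

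For (1), the paper does \emph{not} pass through the connection $1$-form. Instead it observes that $v_{\mu\nu}^\be(y)^*v_{\mu\nu}^\be(x)$ is the holonomy of $\nabla$ around the boundary of the geodesic region $D_\mu(x,y)\cup D_\nu(x,y)$ (the union of two ``fans'' of radial geodesics from $x_\mu$ and $x_\nu$ to the segment $[x,y]$), and then invokes the classical area--curvature holonomy estimate $\|{\rm holonomy}-1\|\leq \mathrm{Area}\cdot\|R^\nabla\|$. The constant is simply $C_7=\max_\mu\sup_{x,y\in U_\mu}\max\{d(x,y),\,2\,\mathrm{Area}(D_\mu(x,y))\}$. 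For (2), the paper again avoids the radial-gauge connection form: it writes $u\Gamma^{\nabla_1}_{[x_\mu,x]}u^*-\Gamma^{\nabla_2}_{[x_\mu,x]}$ directly as an integral of $(u\nabla_1 u^*-\nabla_2)$ along the geodesic, giving the bound $d(x,x_\mu)\varepsilon$ without ever needing $\|A_\mu^{\be_i}\|\lesssim\varepsilon$.

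Your route via the radial-gauge Poincar\'e formula $A_\mu^\be=\int_0^1 t\,\iota_{\partial_r}R^\nabla\,dt$ is a perfectly valid alternative and even yields the extra information $\|A_\mu^\be\|_{\Omega^1}\leq C_g\varepsilon$, at the cost of having to control Jacobi-field growth (equivalently, the metric in normal coordinates) to turn the coordinate formula into an invariant norm bound. The paper's holonomy--area argument sidesteps that analytic step entirely and gives a cleaner, purely geometric constant.
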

For example, an open cover consisting of open balls of radius less than the injectivity radius of $M$ satisfies the assumption of Lemma \ref{lem:smtop} (when $M$ has a boundary, take an open cover of the invertible double $\hat{M}$ as above and restrict it to $M$). 
\begin{proof} 
Let $x,y \in U_\mu$. We write $[x,y]$ for the minimal geodesic connecting $x$ and $y$ in $U_{\mu}$ and 
\[ D_\mu (x,y):= \bigcup_{z \in [x,y]} [x_\mu,z]. \]
We define the constant $C_7$ as
\begin{align}
C_7:= \max_{\mu} \sup_{x,y \in U_\mu} \max \{ d(x,y), 2\mathrm{Area} (D_\mu(x,y))\} <\infty .\label{form:C7}
\end{align}

For a path $\ell \colon [0,t] \to M$, let $\Gamma _{\ell}^{\nabla} \colon E_{\ell(0)} \to E_{\ell(t)}$ denote the parallel transport along $\ell$. We fix an identification of $E_{x_\mu} $ with $\bC^n$. Then 
\[ \psi^\be_\mu (x):= \Gamma _{[x, x_\mu ]} \colon E_x \to   E_{x_\mu} \cong \bC^n \]
gives a local trivialization of $E$. Let $v_{\mu\nu}^\be(x):=\psi^\be_\nu(x)^*\psi^\be_{\mu}(x)$. Then $v_{\mu\nu}^\be(y)^*v_{\mu\nu}^\be (x)$ is the parallel transport along the boundary of the surface $D_\mu(x,y) \cup D_\nu (x,y)$. 
By a basic curvature estimate of the holonomy (see for example \cite[pp.19]{MR1389019}), we get
\[ \| v_{\mu\nu}^\be(y)^*v_{\mu\nu}^\be (x) -1\|  < \mathrm{Area}(D_\mu (x,y) \cup D_\nu(x,y)) \cdot \| R^\nabla \| < C_7 \varepsilon. \]

To see (2), it suffices to show that $\| \psi_{\mu}^{\be_2}(x)^*u(x)\psi_\mu^{\be_1}(x) - u_\mu \| <C_7 \varepsilon$. Let $x(t)$ denote the point of $[x,x_\mu]$ uniquely determined by $d(x,x(t))=t$. Since
\[ u\Gamma ^{ \nabla _1}_{[x_\mu,x]}u^* - \Gamma_{[x_\mu,x]}^{\nabla_2}=  \Gamma ^{u \nabla_1 u^*}_{[x,y]} - \Gamma_{[x_\mu,x]}^{\nabla_2} = \int_{0}^{d(x,x_\mu)}(u\nabla^1_{\frac{d}{dt}} u^* -\nabla^2_{\frac{d}{dt}})\Gamma_{[x_\mu,x(t)]}dt,  \]
we obtain that
\[\|\Gamma ^{\nabla_2}_{[x_\mu,x]} u \Gamma^{\nabla_1}_{[x, x_\mu]} -u \| = \|u\Gamma ^{ \nabla _1}_{[x_\mu,x]}u^* - \Gamma_{[x_\mu,x]}^{\nabla _2} \| \leq d(x,y)\varepsilon \leq C_7 \varepsilon . \qedhere \] 
\end{proof}

\begin{lem}\label{lem:topsm}
Let $(M,g)$ and $\cU$ be as in Lemma \ref{lem:smtop}. Then there is a constant $C_8=C_8(g, \cU)$ depending only on $g$ and $\cU$ such that the following hold for any $0<\varepsilon < \frac{1}{4C_8}$.
\begin{enumerate}
\item Let $\bv$ be a $(\varepsilon , \cU)$-flat vector bundle. Then, the underlying vector bundle $E_\bv$ admits an $(C_8 \varepsilon ,g)$-flat connection $\nabla_\bv$.
\item For $\bu \in \Hom_\varepsilon ( \bv_1, \bv_2)$, there is $\bar{u} \in \cG_{C_8 \varepsilon} (\bu)$ such that $\|\bar{u} \nabla_{\bv_1}\bar{u}^*-\nabla_{\bv_2} \|_{\Omega^1}<C_8\varepsilon$.
\end{enumerate}
\end{lem}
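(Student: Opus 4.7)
My approach is to first smooth the Čech cocycles using Lemma \ref{lem:smoothing}, then equip the resulting smoothed bundle with the classical partition-of-unity connection; the smoothness of $\bv'$ is exactly what yields a curvature estimate \emph{linear} in $\varepsilon$ rather than only of order $1$.

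For part (1), apply Lemma \ref{lem:smoothing} to $\bv$ to obtain $\bv'\in\Bdl^{\cU,24\varepsilon}_n(M)$ satisfying $\|v_{\mu\nu}-v'_{\mu\nu}\|<13\varepsilon$ and $\|dv'_{\mu\nu}\|_{\Omega^1}<C_5\varepsilon$. Lemma \ref{lem:gauge}(1) applied to $\mathbf{1}\in\Hom_{13\varepsilon}(\bv,\bv')$ produces a canonical unitary identification $E_\bv\cong E_{\bv'}$, so it suffices to put a hermitian connection of curvature $O(\varepsilon)$ on $E_{\bv'}$. Choosing the $\{\eta_\sigma\}_{\sigma\in I}$ of Remark \ref{rmk:bundle} smooth, define local connection $1$-forms in the smooth trivializations $\psi^{\bv'}_\mu$ by
\[
A_\mu:=\sum_\sigma \eta_\sigma^2\, v'_{\mu\sigma}\, dv'_{\sigma\mu} \in \Omega^1(U_\mu,\bM_n).
\]
Skew-hermiticity of $v'_{\mu\sigma}dv'_{\sigma\mu}$ follows from $d(v'_{\mu\sigma}v'_{\sigma\mu})=d(1)=0$, so $A_\mu$ is skew-hermitian. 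The exact Čech cocycle identity $v'_{\nu\mu}v'_{\mu\sigma}=v'_{\nu\sigma}$, together with $\sum_\sigma \eta_\sigma^2=1$, gives by a direct calculation the gauge transformation law $A_\nu=v'_{\nu\mu}A_\mu v'_{\mu\nu}+v'_{\nu\mu}dv'_{\mu\nu}$, so the $A_\mu$ glue to a hermitian connection $\nabla_\bv$.

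Expanding $F_\mu = dA_\mu + A_\mu\wedge A_\mu$ gives
\[
F_\mu = \sum_\sigma d(\eta_\sigma^2)\wedge v'_{\mu\sigma}dv'_{\sigma\mu} + \sum_\sigma \eta_\sigma^2\, dv'_{\mu\sigma}\wedge dv'_{\sigma\mu} + A_\mu\wedge A_\mu.
\]
The first sum is bounded by $2\kappa |I| C_5 \varepsilon$ with $\kappa:=\max_\mu\|d\eta_\mu\|$, and the remaining two terms are of order $\varepsilon^2$ via $\|dv'_{\mu\sigma}\|<C_5\varepsilon$ and $\|A_\mu\|\le|I|C_5\varepsilon$; choosing $C_8=C_8(g,\cU)$ large enough to dominate the sum proves part (1). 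For part (2), smooth both $\bv_i\to\bv'_i$, so that $\bu\in\Hom_{27\varepsilon}(\bv'_1,\bv'_2)$, and apply Lemma \ref{lem:smoothingunitary} (with $\varepsilon$-parameter rescaled to absorb the factor $C_5$) to produce a smooth family $\bar u$ with $\bar u_\mu v'^1_{\mu\nu}\bar u_\nu^*=v'^2_{\mu\nu}$ exactly and $\|d\bar u_\mu\|_{\Omega^1}=O(\varepsilon)$. Differentiating the relation $v'^2_{\sigma\mu}=\bar u_\sigma v'^1_{\sigma\mu}\bar u_\mu^*$ and substituting into the formula for $A^2_\mu$ yields
\[
A^2_\mu - \bar u_\mu A^1_\mu \bar u_\mu^* - \bar u_\mu d\bar u_\mu^* = \sum_\sigma \eta_\sigma^2\, \bar u_\mu v'^1_{\mu\sigma}\,(\bar u_\sigma^*\, d\bar u_\sigma)\, v'^1_{\sigma\mu} \bar u_\mu^*,
\]
whose $\Omega^1$-norm is at most $\max_\sigma\|d\bar u_\sigma\|_{\Omega^1}=O(\varepsilon)$; the left hand side is precisely the local expression of $\nabla_{\bv_2}-\bar u \nabla_{\bv_1}\bar u^*$. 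Composing with the canonical identifications $E_{\bv_i}\cong E_{\bv'_i}$ from part (1) (which are unitary and so isometric on connection differences) produces the required $\bar u\in\cG_{C_8\varepsilon}(\bu)$ after absorbing the constants into $C_8$.

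The main obstacle is to ensure the curvature estimate is linear in $\varepsilon$. A direct use of the Grassmannian connection $\nabla=p_{\bv'}\circ d$ would fail: $\|dp_{\bv'}\|$ contains an $O(1)$ contribution from partition-of-unity derivatives, so its curvature $\sim\|dp_{\bv'}\|^2$ is only $O(1)$. The partition-of-unity connection succeeds because each cross term $d(\eta_\sigma^2)\wedge v'_{\mu\sigma}dv'_{\sigma\mu}$ pairs the $O(1)$ factor $d(\eta_\sigma^2)$ with the $O(\varepsilon)$ factor $dv'_{\sigma\mu}$, which would vanish outright for a locally constant cocycle; this explains why the smoothing step of Lemma \ref{lem:smoothing} is essential.
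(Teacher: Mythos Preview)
Your proof is correct and follows essentially the same approach as the paper: smooth the cocycle via Lemma~\ref{lem:smoothing}, build the partition-of-unity connection $d+\sum_\sigma \eta_\sigma^2\, v_{\mu\sigma}\,dv_{\sigma\mu}$ (this is exactly the paper's $\nabla_\bv^\mu$), estimate its curvature termwise, and for part~(2) invoke Lemma~\ref{lem:smoothingunitary} to obtain a smooth $\bar u$ before computing $\bar u\nabla_{\bv_1}\bar u^*-\nabla_{\bv_2}$ locally. Your explicit handling of the identification $E_\bv\cong E_{\bv'}$ via Lemma~\ref{lem:gauge} and the closing remark on why the Grassmannian connection fails are useful elaborations, but the argument itself is the paper's.
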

\begin{proof}
By Lemma \ref{lem:smoothing}, we may assume that $\{ v_{\mu\nu}\} $ is $(24 \varepsilon ,\cU)$-flat and $\| dv_{\mu\nu} \| <C_5 \varepsilon$. As in previous lemmas, let $\kappa :=  \max_{ \mu \in I} \| d\eta _\mu \|$. 

The connection
\[ \nabla _\bv^\mu  = d+a^\bv_\mu := \sum _\nu \eta_\nu^2 \cdot v_{\mu \nu} \circ d \circ v_{\mu \nu}^* =d+ \sum_\nu \eta_\nu^2 v_{\mu\nu}dv_{\mu\nu}^* \]
on the trivial bundle $\bC^n_{U_\mu}$ satisfies $v_{\mu\nu}^*\nabla_\bv^\mu v_{\mu\nu}= \nabla_\bv^\nu$ and hence gives rise to a connection $\nabla_\bv$ on $E$. 
Since $\| dv_{\mu\nu} \|<C_5 \varepsilon$, we have $\| a^\bv_\mu \wedge a^\bv_\mu \|\leq \| a_\mu^\bv \|^2 < (|I|C_5\varepsilon) ^2$ and
\[\| da^\bv_\mu \Big\|  \leq \|\sum_\nu d\eta_\nu \wedge v_{\mu\nu}dv^*_{\mu\nu} \Big\| + \Big\| \sum_\nu \eta _\nu dv_{\mu\nu} \wedge dv_{\mu\nu}^* \Big\| \leq  \kappa |I| C_5 \varepsilon + |I| (C_5 \varepsilon) ^2. \]
Therefore, $\| R_\nabla \| = \max_{\mu \in I}  \| da^\bv_\mu + a^\bv_\mu\wedge a^\bv_\mu \| \leq (|I|^2C_5^2 + \kappa |I|C_5 + |I|C_5^2 )\varepsilon$.

Next we show (2). Firstly, in the same way as the above paragraph  we replace $\bv_1$ and $\bv_2$ to $\bv_1'$ and $\bv_2'$ with $\| d v_{\mu \nu} \| <C_5 \varepsilon$ and $d(\bv_i,\bv_i')<13\varepsilon$ if necessary. Then we may assume $\bv_1$, $\bv_2$ satisfies $\| dv_{\mu  \nu} \|<C_5\varepsilon$ and $\bu \in \Hom _{27 \varepsilon} (\bv_1, \bv_2)$. Set $C_5':= \max \{ C_5 , 27 \}$. 
By Lemma \ref{lem:smoothingunitary}, there is $\bar{u} \in \cG_{C_1C_5'\varepsilon}(\bu)$ such that $\| d\bar{u}_\mu \| < C_6C_5'\varepsilon$. Then
\begin{align*}
\bar{u}_\mu \nabla_{\bv_1} \bar{u}^*_\mu &= \sum \eta_{\nu} \bar{u}_\mu v_{\mu\nu}^1 \circ d \circ v_{\nu\mu}^1 \bar{u}_\mu^* = \sum \eta_{\nu}v_{\mu\nu}^2 \bar{u}_\nu \circ d \circ \bar{u}_\nu^* v_{\nu\mu}^2\\
&=\nabla_{\bv_2} + \sum \eta_\mu v_{\mu\nu}^2 \bar{u}_\nu (d \bar{u}_\nu^*) v_{\nu\mu}^2
\end{align*}
implies $\|\bar{u} \nabla_{\bv_1}\bar{u}^*-\nabla_{\bv_2} \|_{\Omega^1}<|I| C_5'C_6 \varepsilon$. Now the proof is completed by  choosing $C_8:=\max \{ |I|^2(C_5')^2 + \kappa |I|C_5' + |I|(C_5')^2,  C_1C_5', |I|C_5'C_6 \}$.
\end{proof}

\begin{thm}\label{thm:equiv}
Let $M$ be a compact Riemannian manifold with the boundary $N$. An element $x\in \K^0(M,N)$ is almost flat in smooth sense if and only if it is almost flat in topological sense (i.e.,\ in the sense of Definition \ref{defn:af}). 
\end{thm}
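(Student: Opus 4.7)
The plan is to invoke Lemmas~\ref{lem:smtop} and~\ref{lem:topsm} component-wise on a stably relative vector bundle, passing back and forth between smooth $(\varepsilon,g)$-flat data and topological $(\varepsilon,\cU)$-flat \v{C}ech data. First I would fix, once and for all, a finite good open cover $\cU$ of the pair $(M,N)$ whose members are geodesically convex in the invertible double $\hat M$ (take geodesic balls of radius less than the injectivity radius of $\hat M$, intersect with $M$, and, if necessary, refine using Proposition~\ref{prp:subdiv} to retain the good-cover property for $(M,N)$). This cover satisfies the hypotheses of both Lemma~\ref{lem:smtop} and Lemma~\ref{lem:topsm}.

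For the smooth-to-topological direction, let $\fe=(\be_1,\be_2,\be_0,u)$ be a smooth $(\varepsilon,g)$-flat stably relative vector bundle with $[\fe]=x$. Apply Lemma~\ref{lem:smtop}(1) to each of $\be_1,\be_2$ (on $M$) and $\be_0$ (on $N$, with the induced cover $\cU|_N$) to produce $(C_7\varepsilon,\cU)$-flat cocycles $\bv_1,\bv_2,\bv_0$. Apply Lemma~\ref{lem:smtop}(2) to $u\colon\be_1|_N\oplus\be_0\to\be_2|_N\oplus\be_0$, viewed as a morphism of smooth $(\varepsilon,g)$-flat bundles on $N$, to obtain $\bu\in\Hom_{C_7\varepsilon}(\bv_1|_N\oplus\bv_0,\bv_2|_N\oplus\bv_0)$ with $u\in\cG_{C_7\varepsilon}(\bu)$. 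The quadruple $\fv=(\bv_1,\bv_2,\bv_0,\bu)$ is then a topological $(C_7\varepsilon,\cU)$-flat stably relative vector bundle. Because $u$ itself lies in the contractible set $\cG_{C_1\cdot C_7\varepsilon}(\bu)$ (assuming $\varepsilon$ small enough), Definition~\ref{defn:af} together with Lemma~\ref{lem:gauge}(2) gives $[\fv]=[\fe]=x$.

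For the topological-to-smooth direction, let $\fv=(\bv_1,\bv_2,\bv_0,\bu)$ be an $(\varepsilon,\cU)$-flat stably relative representative of $x$, with $\varepsilon<(4C_8)^{-1}$. Lemma~\ref{lem:topsm}(1) equips each $E_{\bv_i}$ with a smooth $(C_8\varepsilon,g)$-flat connection $\nabla_{\bv_i}$, and Lemma~\ref{lem:topsm}(2) produces $\bar u\in\cG_{C_8\varepsilon}(\bu)$, which is a smooth unitary bundle isomorphism $E_{\bv_1}|_N\oplus E_{\bv_0}\to E_{\bv_2}|_N\oplus E_{\bv_0}$ satisfying $\|\bar u(\nabla_{\bv_1|_N}\oplus\nabla_{\bv_0})\bar u^*-(\nabla_{\bv_2|_N}\oplus\nabla_{\bv_0})\|_{\Omega^1}<C_8\varepsilon$. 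Thus $((E_{\bv_1},\nabla_{\bv_1}),(E_{\bv_2},\nabla_{\bv_2}),(E_{\bv_0},\nabla_{\bv_0}),\bar u)$ is a smooth $(C_8\varepsilon,g)$-flat stably relative vector bundle representing $x$.

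The one genuine point to verify, which I expect to be the main (but minor) obstacle, is compatibility of the translations with direct sums: for Lemma~\ref{lem:smtop}, the local trivialization $\psi^{\be_i|_N\oplus\be_0}_\mu$ must coincide with $\psi^{\be_i}_\mu|_N\oplus\psi^{\be_0}_\mu$ so that the output cocycle on the direct sum is indeed $\bv_i|_N\oplus\bv_0$; for Lemma~\ref{lem:topsm}, one needs $\nabla_{\bv_i|_N\oplus\bv_0}=\nabla_{\bv_i}|_N\oplus\nabla_{\bv_0}$. Both follow immediately from the defining formulas—parallel transport along geodesics, respectively $d+\sum_\nu\eta_\nu^2 v_{\mu\nu}\,dv_{\mu\nu}^*$—since these operations manifestly commute with direct sums. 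Letting $\varepsilon\to 0$ on each side and using that $C_7,C_8$ depend only on $(g,\cU)$ yields the equivalence.
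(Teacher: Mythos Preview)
Your proposal is correct and follows exactly the same approach as the paper: apply Lemma~\ref{lem:smtop} and Lemma~\ref{lem:topsm} component-wise, observing that the underlying stably relative bundle is preserved so that the $\K$-theory class is unchanged. You have in fact supplied more detail than the paper (which dispatches the theorem in two sentences), including the useful remark that both constructions commute with direct sums.
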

\begin{proof}
By Lemma \ref{lem:smtop} and Lemma \ref{lem:topsm}, we can associate from smooth or topological $\varepsilon$-flat stably relative bundles to the other. Since this correspondence preserve the underlying stably relative bundle, we get the conclusion.
\end{proof}

\section{Enlargeability and almost flat bundle}\label{section:4.5}
A connected Riemannian manifold $(M, g)$ is said to be (resp.\ area-) enlargeable if for any $\varepsilon >0$ there is a connected covering $\bar{M}$ and an (resp.\ area-) $\varepsilon$-contracting map $f_\varepsilon$ with non-zero degree from $\bar{M}$ to the sphere $S^n$ with the standard metric, which is constant outside compact subset of $M$. Here we say that $f_\varepsilon$ is area-$\varepsilon$-contracting if $\|\Lambda^2 T_xf_\varepsilon \| \leq \varepsilon$ for any $x \in M_\infty$. Note that any enlargeable manifold is area-enlargeable.

\begin{thm}\label{thm:enlarge}
Let $(M,g)$ be a compact Riemannian spin manifold with a collared boundary $N$. If $M_\infty$ is area-enlargeable, then $M$ has infinite stably relative C*-K-area.
\end{thm}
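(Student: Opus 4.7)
The strategy is the relative version of the Gromov--Lawson~\cite{MR720933}/Hanke--Schick~\cite{MR2353861} construction. Area-enlargeability of $M_\infty$ is used to produce almost flat Hilbert $C^*$-module bundles on $M_\infty$ whose twisted Dirac index does not vanish, and then truncation of the cylindrical end together with a $C^*$-algebraic relative index theorem yields an almost flat stably relative bundle on $(M,N)$ pairing nontrivially with $[M,N]$.

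Fix $\varepsilon > 0$ and (replacing $M$ by $M\times S^1$ if $\dim M$ is odd) apply area-enlargeability to obtain a connected cover $p\colon \bar M_\infty \to M_\infty$ with deck group $\pi$ and an area-$\varepsilon$-contracting map $f\colon \bar M_\infty \to S^{\dim M}$ of nonzero degree, constant outside a compact set $\bar K\subset \bar M_\infty$. Let $V_\pm\to S^{\dim M}$ be Hermitian bundles of equal rank $r$, unitarily identified near a basepoint, with $[V_+]-[V_-]$ the Bott generator of $\widetilde{\K}^0(S^{\dim M})$. Then $f^*V_\pm$ are smooth bundles on $\bar M_\infty$ with curvature norm $\lesssim\varepsilon$, canonically identified on $\bar M_\infty\setminus \bar K$. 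Setting $A_\varepsilon := C^*_{\max}(\pi)$, the Hanke--Schick push-down produces Hilbert $A_\varepsilon$-module bundles $\cE_\pm$ on $M_\infty$, smooth and of curvature norm $\lesssim\varepsilon$, that are stably identified outside $K := p(\bar K)$ via the Mishchenko bundle $L_\pi := \bar M_\infty\times_\pi A_\varepsilon$. Choose $R$ so large that $K\subset (M_R)^\circ$ with $M_R := M\cup_N N\times[0,R]$; since $(M_R, N\times\{R\}) \cong (M,N)$, restriction produces a smooth $(c\varepsilon, g)$-flat stably relative bundle $\fe = (\cE_+|_{M_R}, \cE_-|_{M_R}, L_\pi^{\,r}|_{N\times\{R\}}, u)$ on $(M,N)$. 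By Theorem~\ref{thm:equiv}, starting with $\varepsilon$ small enough, $[\fe]\in \K^0(M,N;A_\varepsilon)$ is $(\varepsilon',\cU)$-flat in the topological sense of Definition~\ref{defn:af} for any prescribed $\varepsilon' > 0$.

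To complete the proof one must show $\langle [\fe], [M,N]\rangle \neq 0$ in $\K_0(A_\varepsilon)$. Since $\fe$ is stably trivial on a neighborhood of $N\times\{R\}$, extending by $L_\pi^{\,r}$ along the cylinder $N\times[R,\infty)$ recovers $\cE_\pm$ on all of $M_\infty$, and a $C^*$-algebraic APS/partitioned-manifold index argument identifies the relative pairing $\langle [\fe],[M,N]\rangle$ with the $C^*$-algebraic index of $D_{M_\infty}\otimes(\cE_+ - \cE_-)$ on the complete manifold $M_\infty$. By the Hanke--Schick multiplicativity principle applied on the $\pi$-cover, this class maps under the canonical trace on $C^*_{\max}(\pi)$ to the ordinary Dirac index of $D_{\bar M_\infty}\otimes(f^*V_+ - f^*V_-)$, and a degree-type argument \`a la Atiyah--Singer/Gromov--Lawson computes this to be $\deg(f)\neq 0$, so $\langle [\fe],[M,N]\rangle \neq 0$ in $\K_0(A_\varepsilon)$. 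The main technical obstacle is this relative-to-absolute passage at the level of $\K$-theory of Hilbert $C^*$-modules: one must verify that the index pairing of a stably relative almost flat bundle with $[M,N]$ agrees with the $C^*$-algebraic index on the complete manifold $M_\infty$ once the bundle is extended trivially into the cylinder. This can be treated either via the mapping-cone/excision formalism for relative K-homology or through Roe's coarse index on cylindrical-end manifolds, thereby reducing the result to the non-relative Hanke--Schick theorem applied on $M_\infty$.
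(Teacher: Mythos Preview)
Your outline follows the same Gromov--Lawson/Hanke--Schick strategy as the paper --- pull back an almost flat bundle from the sphere to the cover, push it down to $M_\infty$, truncate the cylinder --- but two of the steps differ from the paper's argument and, as written, are genuine gaps.

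\textbf{The push-down and the coefficient algebra.} You take $A_\varepsilon = C^*_{\max}(\pi)$ with $\pi$ the deck group and invoke a ``Hanke--Schick push-down'' producing finitely generated projective Hilbert $C^*_{\max}(\pi)$-module bundles $\cE_\pm$ on $M_\infty$ from the (non-equivariant) bundles $f^*V_\pm$ on $\bar M_\infty$. For infinite covers this is not a standard construction: the naive transfer of a rank-$r$ bundle along an infinite cover has fiber $\ell^2(F)^r$, which is not finitely generated over $C^*_{\max}(\pi)$, and the Mishchenko bundle does not help because $f^*V_\pm$ carries no $\pi$-action. The paper instead takes $\Gamma=\pi_1(M)$ with its monodromy representation $\sigma$ on $\ell^2(F)$ and works over
\[
A=\{(T,S)\in\bB(\ell^2(F))^{\oplus 2}\mid S\in\sigma(C^*\Gamma),\ T-S\in\bK\}.
\]
The point is the chain of inclusions $c_0(F)\subset\bK(\ell^2(F))\subset A$, which yields an explicit $*$-homomorphism $\theta\colon C_0(\bar M_\infty)\to\bK(C(M_\infty,\cP))$ for a concrete $A$-module bundle $\cP$; this is what sends a compactly supported $(\varepsilon,\bar\cU)$-flat cocycle on $\bar M_\infty$ to an $(\varepsilon,\cU)$-flat Hilbert $A$-module cocycle on $(M_k,N_k)$. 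Lemma~\ref{lem:cylinder} then rewinds the cylinder to produce a $(2\varepsilon,\cU)$-flat \emph{stably} relative bundle on $(M,N)$ --- the stabilizing summand $E_0\cong P^{2k}$ arises from the $2k$ slices of the cylinder, not from a Mishchenko bundle.

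\textbf{Nontriviality of the pairing.} You propose to identify $\langle[\fe],[M,N]\rangle$ with a $C^*$-index on the complete manifold $M_\infty$ via an APS/partitioned-manifold argument and then detect it with a trace; you correctly flag this relative-to-absolute passage as ``the main technical obstacle'' and leave it open. The paper bypasses this entirely with a direct $\K$-theory computation. Since $[\bv,\mathbf{1},\mathbf{1}]=\deg(f_\varepsilon)\cdot\beta_{\bar M_\infty}$ is a multiple of the Bott class supported in a single chart, naturality of $\theta_*$ under open inclusions reduces the computation to one trivialized $U\subset M$, where $\theta_*\beta_{\bar U}=\beta_U\otimes[p]$ for a rank-one projection $p\in\bK\subset A$. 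Hence $\langle\theta_*[\bv,\mathbf{1},\mathbf{1}],[M,N]\rangle=\deg(f_\varepsilon)\cdot[p]$, and this is nonzero because the split exact sequence $0\to\bK\to A\to\sigma(C^*\Gamma)\to 0$ makes $\K_0(\bK)\to\K_0(A)$ injective. No analysis on $M_\infty$ is required.
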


Firstly we prepare some notations. For $M, N$ as above, let $M_r$ denote the space $M \sqcup_{N} N \times [0,r]$ and $N_r:= \partial M_r$ for $r \in [0,\infty]$. We choose an open cover of $M$ using $g$ as in Lemma \ref{lem:smtop}.    
Let $q_r$ denote the continuous map $M_r \to M$ determined by $q_r|_M=\id_M$ and $q_r|_{N \times [0,r]}$ is the projection to $N$. 
We define the open cover $\cU_k$ of $M_k$ as
\[ \cU_k:= \{ U_{(\mu,k)}:=q_r^*U_\mu \cap V_l \}_{(\mu, l) \in I \times k}, \] 
where $V_0=M_1^\circ$, $V_l= N \times (l-1,l+1)$ for $l=1,\dots , n-1$ and $V_k=N \times (k-1,k]$.  
Next, for a covering $\bar{\pi} \colon \bar{M} \to M$, we write $\bar{\cU}$ for the open cover of $\bar{M}$ consisting of connected components of $\pi^{-1}(U_\mu)$'s and $\bar{I}$ for the index set of $\bar{\cU}$. 
We use the same letter $\bar{\pi}$ for the canonical map $\bar{I} \to I$. Similarly we define $\bar{\cU}_k$ and $\bar{I}_k$.

\begin{lem}\label{lem:cylinder}
Let $k \in \bN$ and let $(\bv, \bw , \bu)$ be a $(\varepsilon, \cU_k)$-flat relative bundle with the typical fiber $P$ on $(M_k, N_k )$. 
Then there is a stably relative $(2\varepsilon , \cU)$-flat bundle $\fv$ of Hilbert $A$-modules on $(M,N)$ such that $[\fv] = [\bv, \bw , \bu ]$ under the canonical identification $\K^0(M,N;A) \cong \K^0(M_k , N_k ; A)$. 
\end{lem}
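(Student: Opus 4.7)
The plan is to ``compress'' the cylinder $N \times [0,k] \subset M_k$ onto the boundary $N$, realizing in the almost-flat setting the excision isomorphism $\K^0(M_k, N_k; A) \cong \K^0(M, N; A)$. I would restrict the bundles to $M$ and absorb the cylinder slices into a stabilization bundle on $N$.

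First, observe that the cover $\cU_k$ restricted to $M \subset M_k$ coincides with $\cU$ via the identification $\mu \leftrightarrow (\mu,0)$, so $\bv_1 := \bv|_M$ and $\bv_2 := \bw|_M$ are automatically $(\varepsilon, \cU)$-flat $P$-bundles on $M$. The only issue is that the morphism $\bu$ lives on $N_k = N \times \{k\}$ rather than on $N$, and must be transported through the cylinder without accumulating error.

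Next, I would introduce the auxiliary bundle on $N$
\[ \bv_0 := \bigoplus_{l=1}^{k} \bv|_{N \times \{l\}}, \]
where each summand is regarded as an $(\varepsilon, \cU|_N)$-flat bundle on $N$ via the canonical diffeomorphism $q_k \colon N \times \{l\} \xrightarrow{\sim} N$. The stable morphism $\bu' \colon \bv_1|_N \oplus \bv_0 \to \bv_2|_N \oplus \bv_0$ is then defined, after a suitable cyclic reordering of source and target, as a block-diagonal unitary whose blocks are: the single-step shifts $v^\bv_{(\mu,l+1)(\mu,l)}(x_\mu^l) \colon \bv|_{N \times \{l\}} \to \bv|_{N \times \{l+1\}}$ for $l = 0, 1, \dots, k-1$ (with $x_\mu^l \in U_{(\mu,l)} \cap U_{(\mu,l+1)}$); at the top slice the morphism $u^{(\mu,k)} \colon \bv|_{N_k} \to \bw|_{N_k} = \bv_2|_N$ coming from $\bu$; and the identity on the remaining $\bv|_{N \times \{l\}}$-summands that source and target share. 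Because each matrix block of $\bu'$ uses only a \emph{single} cylinder transition (not a composition along the cylinder), each intertwining identity decomposes into a bounded number of single-transition estimates (each contributing $\varepsilon$ by the $(\varepsilon,\cU_k)$-flatness, via the cocycle identity on a point in the quadruple intersection $U_{(\mu,l)} \cap U_{(\mu,l+1)} \cap U_{(\nu,l)} \cap U_{(\nu,l+1)}$) and at most one application of the morphism property of $\bu$.

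Finally, I would verify $[\fv] = [\bv, \bw, \bu]$ under the canonical identification $\K^0(M, N; A) \cong \K^0(M_k, N_k; A)$ induced by the retraction $q_k$. The shift structure of $\bu'$, together with the stabilization cancellation $[0, 0, \bv_0, \id] \sim 0$ in the relative K-theory (via the relation $(0,0,E_0,1_{E_0}) \sim 0$ of Section~\ref{section:2}), exhibits $\fv$ as a stably relative bundle in the same class as the pullback of $(\bv,\bw,\bu)$ along a homotopy inverse of $j \colon M \hookrightarrow M_k$. The main obstacle is achieving the bound uniformly in $k$: a naive ``long transport'' morphism realizing $\bv|_N \to \bv|_{N_k} \xrightarrow{\bu} \bw|_{N_k} \to \bw|_N$ by composing $k$ cylinder transitions would yield only an $O(k\varepsilon)$-bound. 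The purpose of the stabilization by $\bv_0$ is precisely to distribute this transport across $k$ independent single-step block entries, each contributing only $O(\varepsilon)$ to the intertwining error, so that the overall bound is $k$-independent.
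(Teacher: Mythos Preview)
Your overall strategy—restrict to $M$, stabilize by cylinder slices, and use a cyclic block-shift so that each block involves only a single cylinder transition—is exactly the paper's idea. However, your construction has a genuine gap: you only stabilize by the $\bv$-slices and not by the $\bw$-slices.

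Concretely, you write ``$u^{(\mu,k)} \colon \bv|_{N_k} \to \bw|_{N_k} = \bv_2|_N$'', but this identification is false. Since $\bv_2 := \bw|_M$, we have $\bv_2|_N = \bw|_{N \times \{0\}}$, whereas $\bw|_{N_k} = \bw|_{N \times \{k\}}$; these are \emph{different} $(\varepsilon,\cU|_N)$-flat bundles in general, because $\bw$ is an arbitrary $(\varepsilon,\cU_k)$-flat bundle on $M_k$ with no reason to be constant along the cylinder. Thus your cyclic shift does not close up: the top block lands in $\bw|_{N \times \{k\}}$, not in the first summand $\bw|_{N \times \{0\}}$ of the target. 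To fix this you would need to transport back down through $\bw$, but composing $k$ transitions of $\bw$ reintroduces exactly the $O(k\varepsilon)$ error you were trying to avoid.

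The paper's proof resolves this by taking $Q = P^{2k}$ and
\[
\tilde{\bv}_0 = \bv_1 \oplus \dots \oplus \bv_k \oplus \bw_k \oplus \dots \oplus \bw_1,
\]
so that the cyclic shift runs up the $\bv$-staircase, across via $\bu$ at level $k$, and then back down the $\bw$-staircase, each block again being a single transition (now of $\bw$). Your argument becomes correct once you double the stabilization in this way; everything else you wrote, including the single-step error estimate and the $\K$-theory bookkeeping, then goes through essentially as in the paper.
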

\begin{proof}
For $l=0,\dots, k$, we define a $(\varepsilon, \cU|_N)$-flat $P$-bundle $\bv_l$ on $N$ by
\[ \bv_l := \{v_{(\mu,l)(\nu,l)}|_{U_{(\mu,l)(\nu,l)} \cap N \times \{ l \}}  \}_{\mu , \nu \in I} \]
under the canonical identification of $(N, \cU|_N)$ with $(N \times \{ l \} , \cU_k |_{N \times \{ l \}} )$. Similarly we define $\bw_l$ for $l=0,\dots ,k$. 

For $l=0,\dots, k$, fix $x_{\mu,l} \in U_{\mu,l} \cap N \times \{ l + \frac{1}{2} \}$. We define $\bu_l=\{ u_{l,\mu} \}_{\mu \in I}$ by
\[ u_{l, \mu } := \left\{ \begin{array}{ll}v_{(\mu,l+1)(\mu , l)}(x_{\mu , l}) & l=0,\dots , k-1 \\
u_{(\mu , k)} & l=k, \\ w_{(\mu,2k-l)(\mu , 2k-l+1)}(x_{\mu , 2k-l+1}) & l=k+1,\dots , 2k. \end{array} \right. \] 
Then we have $\bu_{l} \in \Hom_{2\varepsilon }(\bv_{l} , \bv_{l+1} )$, $\bu_k \in \Hom_\varepsilon ( \bv_k , \bw_k)$ and $\bu_{2k-l} \in \Hom _{2\varepsilon}( \bw_{l+1} , \bw_{l})$ for $l=0,\dots ,k-1$.

Let $\tilde{\bv}_1$ and $\tilde{\bv}_2$ be restrictions of $\bv$ and $\bw$ to $M$ with the open cover $\cU_k|_M = \cU$ respectively. 
Let $Q=P^{2k}$, let $\tilde{\bv}_0:= \bv_1 \oplus \dots \oplus \bv_k \oplus \bw_k \oplus \dots \oplus \bw_1 $ and let $\tilde{\bu}= \{ \tilde{u}_\mu \}_{\mu \in I} $, where each $\tilde{u}_\mu \colon P \oplus Q \to P \oplus Q$ is determined by
\[ \tilde{u}_\mu (\xi_0, (\xi_1 , \dots , \xi_{2n})) = (u_{2n, \mu}\xi_{2n} , (u_{0 , \mu}\xi_0 , u_{1,\mu}\xi_1, \dots ,u_{2k-1, \mu}\xi_{2k-1}) ) \]
for $\xi_0 , \dots , \xi_{2k}  \in P$. Then we have
\[ \| \tilde{u}_\mu (\tilde{v}^1_{\mu \nu} \oplus \tilde{v}^0_{\mu \nu }) \tilde{u}_\nu^* - \tilde{v}^2_{\mu \nu}\oplus \tilde{v}^0_{\mu \nu} \| < 2\varepsilon,  \]
that is, $\fv:=(\tilde{\bv}_1,\tilde{\bv}_2,\tilde{\bv}_0,\bu)$ is a stably relative $(2\varepsilon, \cU)$-flat bundle with the typical fiber $(P,Q)$ on $(M,N)$. 

Finally we observe that $[\bv, \bw, \bu] = [\fv]$ in $\K^0(M , N ;A)$. Let $q_l \colon M \to M_l$ be a diffeomorphism extending the canonical identification $N \to \partial M_l$ and let $E_l:= q_l^*E_{\bv_l}$, $E_{2k-l+1}:=q_l^*E_{\bw_l}$ for $l=0,\dots ,k$. Note that $E_l|_N \cong E_{\bv_l}$. Let us choose $\{ \bar{u}_{l,\mu}\}_{\mu \in I} \in \cG_{2C_1 \varepsilon }(\bu_l)$ by Lemma \ref{lem:gauge}, which induces a unitary bundle isomorphism $\bar{u}_l \colon E_l|_N \to E_{l+1}|_N$. 
Then the K-theory class $[\fv]$ is represented by $[E_{\bv_1} , E_{\bv_2}, E_{1}|_N \oplus \dots \oplus  E_{2k}|_N , \bar{U}] $, where
\[ \bar{U} (\xi_ 0 , (\xi_1 , \dots ,\xi_{2k})) = (\bar{u}_{2k}\xi_{2k}, (\bar{u}_{0}\xi_0, \dots , \bar{u}_{2k-1}\xi_{2k-1})). \] 
Let $E_0:= E_{\bv_1}$ and $E_{2k+1}:= E_{\bv_{2}}$. Now we use the equivalence relations on $\Bdl_s(X,Y;A)$ discussed in pp.\pageref{item:equiv} to obtain
\begin{align*}
 [\fv]=&[E_{\bv_1} , E_{\bv_2}, E_{1}|_N \oplus \dots \oplus E_{2k}|_N , \bar{U}] \\
 =& [E_{\bv_1} \oplus E_1 \oplus \dots \oplus E_{2k} ,  E_1 \oplus \dots \oplus E_{2k} \oplus E_{\bv_2} , 0 , \bar{U}  ]  \\
 = & \sum_{l=0}^{2k} [E_l , E_{l+1}, 0, \bar{u}_l] = [E_{k} , E_{k+1} , 0 , \bar{u}_k  ] = q_k^*[\bv, \bw , \bu ]. \qedhere 
 \end{align*}
\end{proof}

Let $F \to \bar{M} \to M$ be a (possibly infinite) connected covering and extend it to $\bar{M}_\infty \to M_\infty$. Let $\sigma$ denote the monodromy representation of $\Gamma:= \pi_1(M)$ on $\ell^2(F)$ and let 
\[ A:=\{ (T,S) \in \bB(\ell^2(F)) ^{\oplus 2} \mid S \in \sigma(C^*(\Gamma)), \ T-S \in \bK \}. \]
Then the the exact sequence 
\begin{align} 0 \to \bK(\ell^2(F)) \xrightarrow{i} A \xrightarrow{\mathrm{pr}_2} \sigma (C^*(\Gamma)) \to 0,\label{form:Asplit}\end{align}
where $i$ is the embedding to the first component and $\mathrm{pr}_2$ is the projection to the second component, splits. 

For a complete Riemannian manifold $M$ with an open cover $\cU$ such that each $U_\mu$ is relatively compact, a \v{C}ech $1$-cocycle $\bv$ on $\cU$ is compactly supported if $v_{\mu \nu} \equiv 1$ except for finitely many $(\mu ,\nu) \in I^2 $ with $U_\mu \cap U_\nu \neq \emptyset$. If a \v{C}ech $1$-cocycle $\bv$ is supported in an open submanifold $M_0$, i.e., $v_{\mu \nu} \equiv 1$ for any $(\mu, \nu ) \in I^2$ with $U_\mu \cap U_\nu \not \subset M_0 \neq \emptyset$, we associate a relative bundle $(\bv |_{M_0}, \mathbf{1} , \mathbf{1})$ on $M_0$ with the open cover $\{ U_\mu \cap M_0\}$. 

\begin{lem}
Let $M_r$, $\bar{M}_r$ and $A$ be as above. Then there is a Hilbert $A$-module bundle $\cP$ on $M$ and a $\ast$-homomorphism $\theta  \colon C_0(\bar{M}_\infty) \to \bK(C(M_\infty, \cP))$ such that, for any compactly supported $(\varepsilon , \bar{\cU}_\infty)$-flat vector bundle $\bv  \in \Bdl_{n}^{\varepsilon , \bar{\cU}_\infty}(\bar{M}_\infty)$ (with the support included to $\bar{M}_r$), the corresponding element $\theta_*[\bv , \mathbf{1} , \mathbf{1}] \in \K^0(M_r , N_r ; A)$ is represented by an $(\varepsilon , \cU)$-flat bundle of finitely generated projective Hilbert $A$-modules. 
\end{lem}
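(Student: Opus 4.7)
The plan is to construct $\cP$ and $\theta$ from the monodromy of the cover, and then to build a cocycle on $M_r$ whose transition functions differ from those of $\cP$ by a compactly supported finite-rank correction coming from $\bv$. The key algebraic input is the very definition of $A$, where ``sheet-monodromy plus finite-rank correction'' is the generic form of an element.

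First, I would exploit that $\cU$ is a good cover, so each $U_\mu$ is simply connected: then $\bar\pi^{-1}(U_\mu) = \bigsqcup_{f \in F} U_{(\mu, f)}$ is canonically indexed by $F$ after fixing a base point, and on each overlap $U_{\mu\nu}$ this labelling changes by a permutation $\sigma_{\mu\nu}$ of $F$ induced by a unique $\gamma_{\mu\nu} \in \Gamma$. The cocycle $S_{\mu\nu} := \sigma(\gamma_{\mu\nu}) \in \sigma(C^*(\Gamma))$ lifts via the splitting of (\ref{form:Asplit}) to a unitary cocycle $\tilde S_{\mu\nu} := (S_{\mu\nu}, S_{\mu\nu}) \in \mathrm{U}(A)$, and I take $\cP$ to be the Hilbert $A$-module bundle on $M$ with typical fiber $A$ and transition cocycle $\tilde S$. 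For $f \in C_0(\bar M_\infty)$, the local frame of $\cP$ over $U_\mu$ identifies sections with $A$-valued functions, and I set $\theta(f)|_{U_\mu}$ to act by the diagonal operator on $\ell^2(F)$ whose $(f', f')$-entry at $x$ is the value of $f$ at the unique lift of $x$ in $U_{(\mu,f')}$; the decay of $f$ at infinity makes only finitely many entries nonzero near any $x$, so this lands in $\bK(\ell^2(F)) \subset A$, and compatibility with $\tilde S_{\mu\nu}$ is tautological since $\sigma_{\mu\nu}$ is exactly the permutation identifying the two labellings. Patching with the partition of unity $\{\eta_\mu\}$ yields $\theta(f) \in \bK(C(M_\infty, \cP))$.

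The heart of the construction is the cocycle associated to $\bv$, supported in $\bar M_r$:
\[ V_{\mu\nu}(x) := \Big( \sum_{\tilde x \in \bar\pi^{-1}(x)} v_{(\bar\mu(\tilde x), \bar\nu(\tilde x))}(\tilde x) \otimes |f_{\tilde x, \mu}\rangle\langle f_{\tilde x, \nu}|, \ S_{\mu\nu} \otimes 1_n \Big), \]
where $\bar\mu(\tilde x), \bar\nu(\tilde x)$ are the sheets through $\tilde x$ with labels $f_{\tilde x, \bullet}$. The first coordinate is unitary on $\ell^2(F) \otimes \bC^n$ by the block structure and differs from $S_{\mu\nu} \otimes 1_n$ by a finite-rank operator, because all but finitely many summands equal $1_n \otimes |f\rangle\langle\sigma_{\mu\nu}^{-1}(f)|$, so the pair lies in $A \otimes \bM_n$. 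The almost-flat inequality $\|V_{\mu\nu}(x) - V_{\mu\nu}(y)\| < \varepsilon$ is immediate from the block-diagonal decomposition and the corresponding inequality for $\bv$, and $V_{\mu\nu} = \tilde S_{\mu\nu} \otimes 1_n$ outside the support of $\bv$, giving the required trivialization on the collar.

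Finally, the equality $[\{V_{\mu\nu}\}] = \theta_*[\bv, \mathbf{1}, \mathbf{1}]$ in $\K^0(M_r, N_r; A)$ would follow by comparing the local projections $p_\bv$ from Remark \ref{rmk:bundle} with the local projection of the new bundle: both are reconstructed by the same block-decomposition procedure, and the Kasparov product along $\theta$ realises exactly this reconstruction. The main obstacle I anticipate is organizational bookkeeping -- tracking the two legs of $(T, S) \in A$ consistently on overlaps, verifying unitarity of $V_{\mu\nu}$ on the nose rather than only up to small error, and identifying the class in the \emph{relative} $\K^0(M_r, N_r; A)$ via the trivialization outside a compact set together with Lemma \ref{lem:cylinder} -- rather than anything genuinely conceptual; the conceptual content that compactly supported almost-flat bundles upstairs yield $A$-valued almost-flat cocycles downstairs is built into the very definition of $A$.
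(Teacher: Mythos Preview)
Your proposal is correct and follows essentially the same approach as the paper: construct $\cP$ as the associated $A$-bundle via the diagonal lift $\hat\sigma(\gamma)=(\sigma(\gamma),\sigma(\gamma))$ of the monodromy, define $\theta$ through the inclusion $c_0(F)\subset\bK(\ell^2(F))\subset A$, and build the downstairs cocycle $\tilde v_{\mu\nu}$ as the block-diagonal of the upstairs transitions twisted by the sheet permutation, then verify the $\K$-theory identification by comparing the associated projections $p_{\bv}$ and $p_{\tilde\bv}$. Two small corrections: for general $f\in C_0(\bar M_\infty)$ the diagonal entries are only $c_0$ rather than finitely supported (still compact, so the conclusion stands), and Lemma~\ref{lem:cylinder} is not needed here---it enters only later to pass from $(M_r,N_r)$ to $(M,N)$.
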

\begin{proof}
Let $\hat{\sigma} \colon \Gamma \to \mathrm{U}(A)$ be the representation given by $\hat{\sigma}(\gamma ):= (\sigma (\gamma) ,\sigma (\gamma ))$. 
Let $\cA$ denote the C*-algebra bundle $\tilde{M}_r \times_{\Ad \hat {\sigma}} A$, which acts on the Hilbert bundle $\cH:=\tilde{M}_r \times _{\hat \sigma } (\ell^2(F)^{\oplus 2})$. 
Then $C(M_r,\cA)$ is isomorphic to $\bK(C(M_r,\cP))$, where $\cP:= \tilde{M}_r \times_{\hat{\sigma}} A$. 
Let $p_\cP := \sum \eta_\mu \eta_\nu \hat{\sigma}(\gamma _{\mu \nu}) \otimes e_{\mu \nu } \in C(M_r, A) \otimes \bM_I$ as in Remark \ref{rmk:bundle} and let $\tau$ denote the identification of $C(M_r,\cA)$ with the corner subalgebra $p_\cP (C(M_r, A) \otimes \bM_I)p_\cP$. 

The Hilbert space $L^2(M_r, \cH)$ is canonically isomorphic to $L^2(\bar{M}_r)^{\oplus 2}$. 
Moreover, the $\Gamma$-equivariant inclusion $c_0(F) \subset \bK(\ell^2(F)) \subset A$ induces 
\[ \theta \colon C_0(\bar{M}_r) \cong C(M_r, \cC) \to C(M_r, \cA), \]
where $\cC:=\tilde{M}_r \times _{\Ad \hat \sigma}c_0(F)$. We remark that it is extended to $\theta \colon C(M_r, \cC^+) \to C(M_r, \cA)$, where $\cC^+:=\tilde{M}_r \times_{\Ad \hat \sigma} c_0(F)^+$. Similarly we define $\theta_{\mu \nu} \colon C_b(U_{\mu \nu}, \cC^+) \to C_b(U_{\mu\nu} , \cA)$.

We fix a local trivialization
\[ \chi_\mu \colon L^2(U_\mu , \ell^2(F)^{\oplus 2}) \to L^2(U_\mu , \cH) \cong L^2(\bar{\pi}^{-1} (U_\mu))^{\oplus 2} \]
coming from that of the covering space $\varphi_\mu \colon U_\mu \times F \to \bar{\pi}^{-1}(U)$ as a fiber bundle with the structure group $\sigma (\Gamma)$. Then there is $\gamma _{\mu \nu} \in \Gamma$ for each $\mu , \nu \in I$ such that $\chi_\mu ^* \chi _\nu = \hat \sigma (\gamma_{\mu \nu})$. Then the $\ast$-homomorphism $\tau$ is written explicitly as
\[ \tau (f):= \sum_{\mu , \nu} \eta_\mu \eta_\nu \cdot \chi_\mu^* (f|_{U_{\mu\nu}}) \chi_\nu \otimes e_{\mu \nu}. \]

For an $(\varepsilon, \cU_\infty)$-flat bundle $\bv \in \Bdl_{n}^{\varepsilon , \bar{\cU}_\infty}(\bar{M} _\infty )$ supported in $\bar{M}_r$, let
\begin{align*}
\tilde{v}'_{\mu \nu} &:= \prod _{\substack{\bar{\pi}(\bar{\mu})=\mu , \bar{\pi}(\bar{\nu})=\nu \\ U_{\bar{\mu} \bar{\nu}}\neq \emptyset} } \diag (v_{\bar{\mu} \bar{\nu}} , 1) \in (C_b(U_{\mu \nu} , \cC ^+) \otimes \bM_n)^{\oplus 2},\\
 \tilde{v}_{\mu \nu}&:= \chi_\mu^* \theta_{\mu\nu} (\tilde{v}'_{\mu\nu})  \chi_\nu  \in C_b(U_{\mu \nu} , A) \otimes \bM_n
 \end{align*}
 for any $\mu , \nu \in I_r$. Then $\tilde{\bv} := \{ \tilde{v}_{\mu \nu} \}_{\mu,\nu \in I_r}$ is a \v{C}ech $1$-cocycle on $M_r$ taking value in the unitary group of $ A \otimes \bM_n$.
 Moreover, by the construction, $(\varepsilon, \bar{\cU})$-flatness of $\bv$ implies that $\tilde{\bv}:=\{ \tilde{v}_{\mu \nu} \}_{\mu, \nu \in I}$ is also an $(\varepsilon , \cU)$-flat bundle of Hilbert $A$-modules.

As in Remark \ref{rmk:bundle}, let 
\begin{align*}
p_{\tilde{\bv}}&:= \sum_{\mu,\nu} \eta_\mu \eta_\nu \otimes \tilde{v}_{\mu \nu} \otimes e_{\mu \nu} \in C(M, A) \otimes \bM_n \otimes \bM_I, \\
p_{\bv} &:=\sum_{\mu, \nu} \eta_{\mu} \eta_{\nu} \otimes \tilde{v}_{\mu \nu}' \otimes e_{\mu \nu}\in C(M_r, \cC^+) \otimes \bM_n \otimes \bM_{I_r}, \\
p_{\mathbf{1}}&:=\sum_{\mu, \nu} \eta_{\mu} \eta_{\nu} \otimes 1_n \otimes e_{\mu \nu}\in C(M_r, \cC^+) \otimes \bM_n \otimes \bM_{I_r},
\end{align*}
Then we have $[p_{\mathbf{1}}]=[1_n]$, $p_\bv - p_{\mathbf{1}}  \in C_0(M_r^\circ, \cC) \cong C_0(\bar{M}_r^\circ)$ and the difference element $[p_{\bv} , p_{\mathbf{1}}] \in \K_0(C_0(\bar{M}_r))$ is equal to $[\bv] - [1_n]$. Therefore, the remaining task is to show that $\theta_*([p_\bv] - [1_n])=[p_{\tilde{\bv}}] - [1_n]$.

The projection 
\begin{align*}
 (\tau \circ \theta)(p_\bv) &= \sum_{\sigma, \tau} \sum_{\mu , \nu} \eta_\sigma \eta_\tau \eta_\mu \eta_\nu \otimes \chi_{\sigma}^* \theta_{\mu\nu}(\tilde{v}_{\mu\nu}') \chi_\tau \otimes e_{\mu\nu} \otimes e_{\sigma \tau} \\
 &  \in C(M_r , A) \otimes \bM_n \otimes \bM_I \otimes \bM_I  
 \end{align*}
is equal to the projection as in Remark \ref{rmk:bundle} associated to the \v{C}ech $1$-cocycle $\{ \chi_\sigma \tilde{v}_{\mu \nu} \chi_\tau \}_{(\mu, \sigma), (\nu, \tau) \in I^2}$ on the open cover $\cU^2:= \{ U_{\mu \sigma} \}_{(\mu ,\sigma) \in I^2}$ and the square root of partition of unity $\{ \eta_{\mu} \eta_{\sigma}\}_{(\mu, \sigma) \in I^2}$. 
At the same time, if we use the square root of partition of unity $\{ \eta_{\mu} \delta_{\mu \sigma } \}$ (where $\delta_{\mu \sigma}$ denotes the delta function) instead of $\{ \eta_\mu \eta_\sigma\}$, then the corresponding projection is identified with $p_{\tilde{\bv}}$. That is, the support of $(\tau \circ \theta)(p_{\bv})$ is isomorphic to that of $p_{\tilde{\bv}}$. This concludes the proof. 
\end{proof}

\begin{proof}[Proof of Theorem \ref{thm:enlarge}]
By taking the direct product with $\bT^1$ if necessary, we may assume that $n:= \dim M$ is even. Let $E$ be a vector bundle on $S^n$ such that $c_n(E)=1$ and let us fix a hermitian connection. For $\varepsilon >0$, let $f_\varepsilon \colon \bar{M}_\infty \to S^n$ be an area-$\varepsilon$-contracting map with non-zero degree. Then the induced connection on $f_\varepsilon^*E$ with the pull-back connection is $(C\varepsilon , g)$-flat in the smooth sense, where the constant $C>0$ is the norm of the curvature of $E$. Let $k \in \bN$ such that $f_\varepsilon$ maps $N \times [k,\infty)$ to the base point $\ast$ of $S^n$.

By Lemma \ref{lem:smtop}, there is a local trivialization $\{ \psi_{\bar{\mu}} \}_{\bar{\mu} \in \bar{I}}$ of $f_\varepsilon^*E$ such that $\bv:=\{ v_{\bar{\mu}\bar{\nu}}=\psi_{\bar{\mu}}^* \psi_{\bar{\nu}} \}_{\mu,\nu \in I}$ is $(C_7\varepsilon , \bar{\cU}_r)$-flat.  Here we remark that the proof of Lemma \ref{lem:smtop} also works for the noncompact manifold $\bar{M}$ since the constant $C_7=C_7(g,\bar{\cU}_k)$ given in (\ref{form:C7}) actually coincides with $C_7(g,\cU_k)$. Note that we also have $C_7(g,\bar{\cU}_k)=C_7(g,\bar{\cU}_1)$, that is, there is a uniform upper bound for $C_7(g, \bar{\cU}_k)$'s.

The remaining task is to show that the pairing $\langle \theta_*[\bv , \mathbf{1} , \mathbf{1}] , [M,N] \rangle $ is non-trivial. 
For an even-dimensional connected manifold $X$, we write $\beta _X$ for the image of the Bott generator in $\K^0(X)$ by an open embedding. Then $[E] - [\bC^n] =\beta_{S^n} \in \K^0(S^n)$ and hence
\[ [\bv ,\mathbf{1} , \mathbf{1}] = f_\varepsilon ^* [E] - [\bC^n] = \mathrm{deg} f_\varepsilon \cdot \beta_{\bar{M}_\infty } \in \K^0(\bar{M}_\infty).  \]
Let us choose an open subspace $U$ of $M$ such that $\bar{\pi} ^{-1}(U) \cong U \times F$ and a copy $\bar{U} \subset \bar{\pi}^{-1}(U)$ of $U$. Then we have $C_0(U,\cA) \cong C_0(U) \otimes A$ and the diagram
\[
\xymatrix{
\K_0(C_0(\bar{\pi}^{-1}(U))) \ar[r]^{\theta_*} \ar[d]^{\iota_*} & \K_0(C_0(U) \otimes A) \ar[d] ^{\iota_*} \\
\K_0(C_0(\bar{M}_\infty)) \ar[r]^{\theta_*} & \K_0(C_0(M_\infty ,\cA))
}
\]
commutes, where the vertical maps $\iota_*$ are induced from open embeddings.  
By the construction of $\theta_*$, we have $\theta_* \beta_{\bar{U}} = \beta_U \otimes [p] \in \K_0(C_0(U) \otimes A)$, where $p \in \bK(\ell^2(F)) \subset A$ is a rank $1$ projection,. 
Therefore we obtain that
\begin{align*}
 \langle \theta_* \beta _{\bar{M}}, [M,N] \rangle &= \langle  \theta_* \iota_* \beta_{\bar{U}}, [M,N] \rangle
 =\langle \iota_* \theta_* \beta_{\bar{U}}, [M,N] \rangle
 = \langle \theta_* \beta_{\bar{U}}, [U] \rangle\\
 & =  \langle \beta \otimes [p] , [U] \rangle = [p] \in \K_0(A), 
 \end{align*}
and hence
\[ \langle \theta_* [\bv , \mathbf{1} , \mathbf{1}] , [M,N] \rangle = \deg (f_\varepsilon) \langle \theta_* \beta, [M,N] \rangle 
= \deg (f_\varepsilon)\cdot [p]. \]
This finishes the proof since $\K_0(\bK(\ell^2(F))) \to \K_0(A)$ is injective (we recall that the exact sequence (\ref{form:Asplit}) splits). 
\end{proof}

\section{Relative quasi-representations and almost monodromy correspondence}\label{section:5}
Let $\Gamma$ be a countable discrete group and let $\cG$ be a finite subset of $\Gamma$. 
Recall that a map $\pi \colon \Gamma \to \mathrm{U}(P)$ is a \emph{$(\varepsilon , \cG)$-representation} of $\Gamma$ on $P$ if $\pi(e)=1$ and
\[ \|\pi(g)\pi(h)-\pi(gh)\|<\varepsilon   \]
for any $g,h \in \cG$. Let $\qRep_{P}^{\varepsilon , \cG}(\Gamma)$ denote the set of $(\varepsilon, \cG)$-representations of $\Gamma$ on $P$.
\begin{defn}
Let $\pi_1$ and $\pi_2$ be $(\varepsilon , \cG)$-representations of $\Gamma$. An \emph{$\varepsilon$-intertwiner} $u \in \Hom _\varepsilon ( \pi_1 , \pi_2)$ is a unitary $u \in \mathrm{U}(P)$ such that $\| u\pi_1(\gamma )u^* - \pi_2(\gamma) \|< \varepsilon$.
\end{defn}

Let $\phi \colon \Lambda \to \Gamma $ be a homomorphism between countable discrete groups. Let $\cG=(\cG_\Gamma, \cG_\Lambda) $ be a pair of finite subsets $\cG_\Gamma \subset \Gamma $ and $\cG_\Lambda \subset \Lambda$ such that $\phi (\cG_\Lambda ) \subset \cG_\Gamma $. 
\begin{defn}
A \emph{stably relative $(\varepsilon , \cG )$-representation} of $(\Gamma, \Lambda) $ is a quadruple $\boldsymbol{\pi}:=(\pi _1, \pi_2, \pi_0 , u)$, where 
\begin{itemize}
\item $\pi_1 \colon \Gamma \to \mathrm{U}(P)$ and $\pi_2 \colon \Gamma \to \mathrm{U}(P)$ are $(\varepsilon , \cG _\Gamma) $-representations of $\Gamma$,
\item $\pi_0 \colon \Lambda \to \mathrm{U}(Q)$ is a $(\varepsilon , \cG_\Lambda )$-representation of $\Lambda$, and
\item $u \in \Hom _\varepsilon (\pi_1 \circ \phi \oplus \pi_0 ,  \pi_2 \circ \phi \oplus \pi_0)$.
\end{itemize}
We write $\qRep _{P,Q}^{\varepsilon , \cG} (\Gamma , \Lambda )$ for the set of stably relative $( \varepsilon , \cG)$-representations of $(\Gamma, \Lambda )$ on $(P,Q)$. 
\end{defn}
We say that two $(\varepsilon ,\cG)$-representations $\boldsymbol{\pi}$ and $\boldsymbol{\pi}'$ are unitary equivalent if there are unitaries $U_1, U_2 \in \mathrm{U}(P)$ and $U_0 \in \mathrm{U}(Q)$ such that $\pi_i = \Ad (U_i) \circ \pi_i'$ for $i=0,1,2$ and $u'(U_1 \oplus U_0)=(U_2 \oplus U_0)u$.

\begin{rmk}\label{rmk:unstable}
There is an obvious one-to-one correspondence between $\qRep_P^{\varepsilon , \cG} (\Gamma, \Lambda)$ and $\qRep^{\varepsilon , \cG}_P (\Gamma, \phi(\Lambda) )$. Moreover, any relative $(\varepsilon ,\cG)$-representation $(\pi_1 , \pi_2 , u )$ is unitary equivalent to $(\pi _1, \Ad (u^*) \circ \pi_2 , 1)$. That is, up to unitary equivalence we may assume that $u=1$. 
\end{rmk}

Finally we give the almost monodromy correspondence between almost flat bundles on a pair of finite CW-complexes and quasi-representations of the fundamental groups. 

Let $(X,Y)$ be a pair of finite CW-complexes with a good open cover $\cU$. We write $\Gamma := \pi_1(X)$, $\Lambda :=\pi_1(Y)$ and $\phi \colon \Lambda \to \Gamma$ for the map induced from the inclusion.  Fix a maximal subtree $T$ of the $1$-skeleton $N_\cU^{(1)}$ of the nerve of $\cU$ such that $T \cap N_{\cU|_Y}^{(1)}$ is also a maximal subtree of $N_{\cU|_Y}^{(1)}$.
\begin{defn}
We say that $\bv \in \Bdl^{\varepsilon , \cU} _P (X)$ is \emph{normalized on $T$} if $\| v_{\mu\nu}(x)-1 \| < \varepsilon$ for any $\langle \mu,\nu \rangle \in T$. We also says that $\fv=(\bv_1,\bv_2, \bv_0, \bu) \in \Bdl_{P,Q}^{\varepsilon , \cU}(X,Y)$ is normalized on $T$ if $\bv_1$, $\bv_2$ and $\bv_0$ are normalized on $T$. Let $\Bdl_{P}^{\varepsilon , \cU}(X)_T$ (resp.\ $\Bdl_{P,Q}^{\varepsilon , \cU}(X,Y)_T$) denote the set of $(\varepsilon , \cU)$-flat bundles normalized on $T$.
\end{defn}
\begin{lem}\label{lem:normalized}
Any stably relative $(\varepsilon , \cU)$-flat bundle $\fv$ is unitary equivalent (in the sense of Remark \ref{rmk:bdlequiv}) to a stably relative $(\varepsilon , \cU)$-flat bundle normalized on $T$.
\end{lem}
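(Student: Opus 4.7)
The approach is to construct, for each of the three underlying \v{C}ech cocycles in $\fv = (\bv_1,\bv_2,\bv_0,\bu)$, a family of unitaries implementing a gauge transformation (in the sense of Remark \ref{rmk:bdlequiv}) that trivializes it to within $\varepsilon$ along the designated subtree, and then to package these three families into a single unitary equivalence $\fu = (\bu_1,\bu_2,\bu_0)$. The key observation is that because $T$ is a tree there is a canonical path-ordering of its vertices, so an inductive definition of the unitaries is well-defined (no loop compatibility needs to be checked).

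Concretely, fix roots $\mu_0 \in I$ for $T$ and $\mu_0' \in I_Y$ for the subtree $T_Y := T \cap N_{\cU|_Y}^{(1)}$ of $N_{\cU|_Y}^{(1)}$, which is maximal by hypothesis, and reuse the base points $x_{\mu\nu} \in U_{\mu\nu}$ from Section \ref{section:3}. For $\mu \in I$, let $\mu_0 = \alpha_0, \alpha_1, \dots, \alpha_k = \mu$ be the unique path in $T$ from the root, and set
\[
u_{i,\mu} := v^i_{\alpha_0 \alpha_1}(x_{\alpha_0 \alpha_1})\, v^i_{\alpha_1 \alpha_2}(x_{\alpha_1 \alpha_2}) \cdots v^i_{\alpha_{k-1} \alpha_k}(x_{\alpha_{k-1} \alpha_k}) \in \mathrm{U}(P) \quad (i=1,2),
\]
and analogously define $u_{0,\mu} \in \mathrm{U}(Q)$ for $\mu \in I_Y$ using the path in $T_Y$ from $\mu_0'$. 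A direct computation shows that for any edge $\langle \mu,\nu\rangle \in T$ with $\nu$ closer to the root, $u_{i,\mu} = u_{i,\nu} v^i_{\nu\mu}(x_{\nu\mu})$, so $u_{i,\mu} v^i_{\mu\nu}(x_{\mu\nu}) u_{i,\nu}^* = 1$ and hence for any $x \in U_{\mu\nu}$,
\[
\| u_{i,\mu} v^i_{\mu\nu}(x) u_{i,\nu}^* - 1 \| = \| v^i_{\mu\nu}(x) - v^i_{\mu\nu}(x_{\mu\nu}) \| < \varepsilon.
\]
The same argument with $T_Y$ in place of $T$ gives the corresponding estimate for $\bv_0$.

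With $\fu := (\bu_1,\bu_2,\bu_0)$, Remark \ref{rmk:bdlequiv} then yields a stably relative $(\varepsilon,\cU)$-flat bundle $\fu \cdot \fv$ that is unitary equivalent to $\fv$ and whose three underlying cocycles are normalized on $T$ (respectively $T_Y$), proving the claim. I do not expect a real obstacle here: the only mildly delicate point is the apparent asymmetry between $\mu$ and $\nu$ in the definition of $u_{i,\mu}$ via the oriented path, but this is harmless because the cocycle identity $v^i_{\nu\mu} = v^i_{\mu\nu}(\cdot)^*$ makes the resulting normalization estimate symmetric in the two endpoints of the edge.
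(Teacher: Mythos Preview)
Your proof is correct and follows essentially the same approach as the paper: the paper's proof also reduces to constructing, for each underlying cocycle, a family $\bu = \{u_\mu\}$ by induction along the tree $T$ so that $u_\mu = u_\nu v_{\mu\nu}(x_{\mu\nu})^*$ for each edge $\langle \mu,\nu\rangle \in T$, which is exactly your path-ordered product written out explicitly. Your treatment is simply a more detailed unpacking of the inductive construction the paper sketches in one line.
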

\begin{proof}
It suffices to show that, for any $\bv \in \Bdl_P^{\varepsilon , \cU}(X)$, there is $\bu \in \mathrm{U}(P)^I$ such that $\bu \cdot \bv$ is normalized on $T$.  
Such $\bu$ is constructed inductively (indeed, an inductive construction gives a family $\bu=\{ u_\mu \}_{\mu \in I}$ with the property that $u_\mu = u_{\nu}v_{\mu\nu}(x_{\mu \nu})^*$ for any $\langle \mu , \nu \rangle \in T$).  
\end{proof}
Now we give a one-to-one correspondence up to small correction between (resp.\ stably) relative quasi-representations and almost flat (resp.\ stably) relative bundles normalized on $T$. 

As in Lemma \ref{lem:loctriv}, a $1$-cell $\langle \mu , \nu \rangle \in N_\cU^{(1)} \setminus T$ corresponds to an element $\gamma_{\mu \nu} := [\ell_\mu^{-1} \circ \langle \mu , \nu \rangle \circ \ell_{\nu}]$ of $\Gamma$. Let 
\[ \cG _\Gamma := \{ \gamma _{\mu \nu} \mid \langle \mu , \nu \rangle \in N_\cU^{(1)} \setminus T \} \subset \Gamma . \]
Similarly we define $\cG_\Lambda$ as the set of elements of $\Lambda$ of the form $\gamma_{\mu \nu}$ for $\langle \mu , \nu\rangle \in N_{\cU|_Y}^{(1)} \setminus T$.
Let $\bF_\cG$ denote the free group with the generator $\{s_{\mu \nu} \mid \langle \mu , \nu \rangle \in N_{\cU}^{(1)} \setminus T \} $. We fix a set theoretic section $\tau \colon \Gamma \to \bF_\cG$, that is, $\tau(\gamma _{\mu \nu}) = s_{\mu \nu}$.

\begin{defn}[{\cite[Definition 4.2]{mathOA150306170}}]\label{defn:alpha}
For $\bv \in \Bdl_P^{\varepsilon , \cU}(X)_T$, let
\[\alpha(\bv)(\gamma ):= \prod_{k=1}^n u_{\mu_{k+1}} v_{\mu_{k+1} \mu_{k} } (x_{\mu_{k+1} \mu_{k}}) u_{\mu_k}^* \]
for $\gamma \in \Gamma$ such that $\tau(\gamma) = s_{\mu_1 , \mu_2 } \cdot \dots \cdot s_{\mu_{k-1}, \mu_k} $. Here $u_{\mu}$ is as in (\ref{form:path}).
\end{defn}
It is essentially proved in \cite[Proposition 4.8]{mathOA150306170} that there is a constant $C_{9}=C_9(\cU)$ depending only on $\cU$ such that $\alpha(\bv)$ is a $(C_{9}\varepsilon, \cG)$-representation of $\Gamma $ in $P$.

Conversely, suppose that we have a $(\varepsilon, \cG)$-representation of $\Gamma$. Let $\{ \eta_\mu \} _{\mu \in I}$ and $\{ e_\mu\}_{\mu \in I}$ be as in Remark \ref{rmk:bundle}. Let us define
\begin{align*} 
\breve{\psi}_\mu^{\pi} &:= \sum \eta_\nu \otimes \pi(\gamma_{\nu\mu}) \otimes e_{\nu} \in C(X) \otimes \bB(P) \otimes \bC^I, \\
 v_{\mu\nu}^\pi &:= ((\breve{\psi}_\nu^{\pi})^*\breve{\psi}_\nu^{\pi})^{-1/2}((\breve{\psi}_\nu^{\pi})^*\breve{\psi}_\mu^{\pi})((\breve{\psi}_\mu^{\pi})^*\breve{\psi}_\mu^{\pi})^{-1/2}.
\end{align*}
By Lemma \ref{lem:polar}, we have the inequality $\| v_{\mu \nu}^\pi(x) - \pi (\gamma_{\mu \nu } ) \| <4 \varepsilon$. This implies that $\bv^\pi := \{ v_{\mu\nu}^\pi \}_{\mu,\nu \in I}$ is $(8\varepsilon, \cU)$-flat bundle normalized on $T$.
\begin{defn}\label{defn:beta}
For $\pi \in \qRep_P^{\varepsilon , \cG }(\Gamma)$, we define $\beta(\pi)$ to be $\bv^\pi \in \Bdl_{P}^{8\varepsilon, \cU} (X)_T$.
\end{defn}

We consider the distance in $\Bdl^{\varepsilon , \cU}_P(X)$ and $\qRep^{\varepsilon, \cG}_P(\Gamma)$ defined as
\begin{align*}
d(\bv, \bv')&:= \sup_{\mu , \nu \in I} \| v_{\mu\nu} - v'_{\mu \nu} \|, \\
d(\pi , \pi')&:=  \sup _{\gamma \in \cG_\Gamma} \| \pi(\gamma) - \pi'(\gamma) \|.
\end{align*}
\begin{lem}\label{lem:alpha}
There is a constant $C_{10}=C_{10}(\cU)>0$ depending only on $\cU$ such that the maps $\alpha $ and $\beta$ satisfy
\begin{align*}
\begin{split}
d(\alpha (\bv), \alpha (\bv ' )) &\leq d(\bv, \bv') + C_{10}\varepsilon, \\
d(\beta (\pi), \beta (\pi ')) &\leq d(\pi , \pi' ) + C_{10}\varepsilon, \\
d(\beta \circ \alpha  (\bv) , \bv) &\leq C_{10}\varepsilon, \\
d(\alpha \circ \beta (\pi), \pi) &\leq C_{10}\varepsilon.
\end{split}
\end{align*}
\end{lem}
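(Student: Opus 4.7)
The plan is to reduce all four estimates to two basic observations. First, when $\bv \in \Bdl^{\varepsilon, \cU}_P(X)_T$ is normalized on $T$, the path products $u_\mu$ defined in (\ref{form:path}) satisfy $\|u_\mu - 1\| \leq K\varepsilon$, where $K := \max_\mu |\ell_\mu|$ is the maximum length of a minimal path in $T$; this follows by telescoping the defining product $a_k \cdots a_1$ (with $a_i := v_{\mu_i\mu_{i-1}}(x_{\mu_i\mu_{i-1}})$) against $1\cdots 1$ and using $\|a_i-1\|<\varepsilon$ together with unitarity. Second, by Lemma \ref{lem:polar} applied to $v'_{\mu\nu} = \pi(\gamma_{\mu\nu})$, the entries of $\beta(\pi) = \bv^\pi$ satisfy $\|v^\pi_{\mu\nu}(x) - \pi(\gamma_{\mu\nu})\| < 4\varepsilon$ uniformly in $x$.

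For inequality (1), each $\gamma \in \cG_\Gamma$ satisfies $\tau(\gamma) = s_{\mu\nu}$, so $\alpha(\bv)(\gamma)$ is a single conjugate of $v_{\mu\nu}(x_{\mu\nu})$ by path products; the first observation gives $\|\alpha(\bv)(\gamma) - v_{\mu\nu}(x_{\mu\nu})\| \leq 2K\varepsilon$, and applying this to both $\bv$ and $\bv'$ together with the triangle inequality yields (1) with constant $4K$. For inequality (2), the second observation gives
\[
\|v^\pi_{\mu\nu}(x) - v^{\pi'}_{\mu\nu}(x)\| \leq 8\varepsilon + \|\pi(\gamma_{\mu\nu}) - \pi'(\gamma_{\mu\nu})\|,
\]
and the right-most term is bounded by $d(\pi, \pi')$ since $\gamma_{\mu\nu}$ is either in $\cG_\Gamma$ or equal to $e$ (the latter exactly when $\langle \mu, \nu\rangle \in T$, in which case both operators equal $1$).

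For inequality (3), with $\pi := \alpha(\bv)$, I chain the second observation, the bound $\|\pi(\gamma_{\mu\nu}) - v_{\mu\nu}(x_{\mu\nu})\| \leq 2K\varepsilon$ from the proof of (1), and the $(\varepsilon,\cU)$-flatness estimate $\|v_{\mu\nu}(x_{\mu\nu}) - v_{\mu\nu}(x)\| < \varepsilon$; for edges in $T$, the normalization $\|v_{\mu\nu}(x) - 1\| < \varepsilon$ plays the role of the middle step (since then $\gamma_{\mu\nu} = e$ and $\pi(\gamma_{\mu\nu}) = 1$). For inequality (4), with $\bv := \beta(\pi)$, the second observation gives $\|v^\pi_{\mu\nu} - 1\| < 4\varepsilon$ for $\langle \mu, \nu\rangle \in T$, so a mild relaxation of the first observation yields $\|u_\mu^{\bv^\pi} - 1\| = O(\varepsilon)$, and a final use of Lemma \ref{lem:polar} gives $\|\alpha(\bv^\pi)(\gamma_{\mu\nu}) - \pi(\gamma_{\mu\nu})\| = O(\varepsilon)$. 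Setting $C_{10}$ to be the largest constant that appears completes the proof.

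The main technical point I would watch is to arrange the estimates so that $d(\bv, \bv')$ (resp.\ $d(\pi, \pi')$) contributes additively rather than with a multiplicative factor scaling with path length in $T$. The normalization hypothesis is exactly what enables this: it forces the holonomies $u_\mu$ to be near-identity corrections of size $O(\varepsilon)$, so that conjugation by them can be stripped off at the cost of only an additive $O(\varepsilon)$ error, rather than producing a telescope of differences whose count depends on $K$.
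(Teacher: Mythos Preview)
Your argument is correct and, in fact, more self-contained than the paper's. The paper does not carry out these four estimates directly: instead it passes (via Corollary~\ref{cor:pull}) to the simplicial case, introduces the restriction map $\cR$ to \emph{coordinate bundles} in the sense of Carri\'on--Dadarlat, checks that $\alpha_{\mathrm{CD}}\circ\cR=\alpha$ exactly and that $\cR\circ\beta$ is $O(\varepsilon)$-close to $\beta_{\mathrm{CD}}$, and then simply invokes \cite[Theorems~3.1 and~3.3]{mathOA150306170} as a black box. Your route replaces that citation by the two elementary observations (i) $\|u_\mu-1\|\le K\varepsilon$ on normalized bundles and (ii) $\|v^\pi_{\mu\nu}(x)-\pi(\gamma_{\mu\nu})\|<4\varepsilon$ from Lemma~\ref{lem:polar}; this is exactly what the Carri\'on--Dadarlat machinery would unwind to in this setting, so nothing is lost, while the dependence on the external reference is removed.

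Two small points of care, neither of which breaks the argument. In your treatment of the third inequality you apply the second observation to $\pi:=\alpha(\bv)$, but $\alpha(\bv)$ is only a $(C_9\varepsilon,\cG)$-representation, so Lemma~\ref{lem:polar} gives $4C_9\varepsilon$ rather than $4\varepsilon$; the conclusion is unaffected since this is still a constant multiple of~$\varepsilon$. Likewise in the fourth inequality, $\bv^\pi$ is only $(8\varepsilon,\cU)$-flat and $4\varepsilon$-normalized, so your ``mild relaxation'' should be stated as $\|u_\mu^{\bv^\pi}-1\|\le 4K\varepsilon$. With those constants made explicit, taking $C_{10}$ to be the maximum of $4K$, $8$, $4C_9+2K+1$, and $8K+4$ completes the proof.
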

\begin{proof}
By Corollary \ref{cor:pull}, we may assume that $X$ is a finite simplicial complex and $\cU$ is the open cover of $X$ consisting of star neighborhoods $U_\mu$ of $0$-cells $\mu$. We choose $x_{\mu\nu}$ as the median of the $1$-cell $\langle \mu,\nu \rangle$.

Let $\mathrm{GL}(P)_\delta$ denote the set of $T \in \bB(P)$ with $d(T,\mathrm{U}(P))<\varepsilon$ and let $\mathrm{Crd}_P^{\varepsilon}(X)_T$ denote the set of $\varepsilon$-flat coordinate bundles on $X$ normalized on $T$. Here, an $\varepsilon$-flat coordinate bundle on a simplicial complex is a family $\{ v_{\mu \nu}\}$ of $\varepsilon$-flat $\mathrm{GL}(P)_\varepsilon$-valued functions $v_{\mu\nu}$ on the union of simplices of the barycentric subdivision of $X$ included to $U_\mu \cap U_\nu$ which satisfies the cocycle relation (for the precise definition, see \cite[Definition 2.5]{mathOA150306170}). It is said to be normalized on $T$ if $v_{\mu\nu}(x_{\mu\nu})=1$ for $\langle \mu,\nu \rangle \in T$. We remark that the restriction gives a map $\cR \colon \mathrm{Bdl}_P^{\varepsilon , \cU}(X)_T \to \mathrm{Crd}_P^\varepsilon (X)_T$.

Let $\mathop{\overline{\mathrm{qRep}}}_P^{\varepsilon, \cG}(\Gamma )$ denote the set of $(\varepsilon, \cG)$-representation which takes value in $\mathrm{GL}(P)_\varepsilon$ instead of $\mathrm{U}(P)$. In \cite{mathOA150306170}, Carri\'{o}n and Dadarlat construct maps 
\[ \alpha_{\mathrm{CD}}  \colon \mathrm{Crd}_P^\varepsilon (X) \to \mathop{\overline{\mathrm{qRep}}}\nolimits _P^{C_{10}'\varepsilon ,\cG}(\Gamma)_T , \ \ \ \ \beta_{\mathrm{CD}} \colon \mathop{\overline{\mathrm{qRep}}}\nolimits_P^{\varepsilon, \cG} (\Gamma) \to \mathrm{Crd}_{P}^{C_{10}'\varepsilon}(X),\] 
which is compatible with our $\alpha$ and $\beta$ in the sense that
\begin{itemize}
\item $d(\bv,\bv')-2\varepsilon \leq d(\cR (\bv) , \cR(\bv')) \leq d(\bv,\bv')$ for any $\bv,\bv' \in \Bdl^{\varepsilon, \cU}_P(X)$,
\item $\alpha _{\mathrm{CD}} \circ \cR(\bv) = \alpha(\bv)$ for any $\bv \in \Bdl_{P}^{\varepsilon , \cU}(X)$,
\item $d(\cR \circ \beta(\pi) , \beta_{\mathrm{CD}}(\pi))< (C_{10}' + 8)\varepsilon$ for $\pi \in \qRep_{P}^{\varepsilon, \cG}(\Gamma)$.
\end{itemize}
Here, the second is obvious from the constructions (compare \cite[Definition 4.2]{mathOA150306170} with Definition \ref{defn:alpha}) and the third follows from $\beta_{\mathrm{CD}}(\pi)_{\mu\nu}(x_{\mu\nu}) = \pi(\gamma_{\mu\nu}) $ (which is obvious from the construction \cite[Definition 5.3]{mathOA150306170}) and the inequality $ \| v^\pi_{\mu\nu}(x) -\pi(\gamma _{\mu\nu}) \| < 4 \varepsilon$ remarked above. Now, the lemma follows from \cite[Theorem 3.1, Theorem 3.3]{mathOA150306170}.
\end{proof}

\begin{lem}\label{lem:ab_rel}
Let $\Delta_I \colon \mathrm{U}(P) \to \mathrm{U}(P)^I$ denote the diagonal embedding. There is a constant $C_{11}=C_{11}(\cU)$ depending only on $\cU$ such that the following hods: 
\begin{enumerate}
\item Let $\pi_1 , \pi_2 \in \qRep_P^{\varepsilon, \cG } (\Lambda)$. If there exists $u \in \Hom_\varepsilon (\pi_1 , \pi_2)$, then $\Delta _I(u) \in \mathrm{U}(P)$ is contained in $\Hom_{C_{11}\varepsilon} (\beta(\pi_1), \beta(\pi_2))$.
\item Let $\bv_1, \bv_2 \in \Bdl_P^{\varepsilon , \cU|_Y} (Y)_T$. If there exists $\bu \in \Hom _\varepsilon (\bv_1, \bv_2)$, then $\| u_\mu - u_\nu \| \leq C_{11} \varepsilon$ and $u_\mu \in \Hom_{C_{11}\varepsilon} (\alpha (\bv_1) , \alpha(\bv_2)) $. 
\end{enumerate}
\end{lem}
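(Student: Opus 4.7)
The plan is to prove both parts by direct estimation using the explicit formulas of Definitions \ref{defn:alpha} and \ref{defn:beta}, together with two preparatory observations. First, Lemma \ref{lem:polar} (as used in the construction of $\beta$) yields $\|v^{\pi_j}_{\mu\nu}(x) - \pi_j(\gamma_{\mu\nu})\| < 4\varepsilon$ uniformly in $x \in U_{\mu\nu}$. Second, for $\bv_j$ normalized on $T$, the tree-transport $u^{\bv_j}_\sigma$ of (\ref{form:path}) is a product of at most $|I|$ factors each $\varepsilon$-close to the identity, so $\|u^{\bv_j}_\sigma - 1\| \leq |I|\varepsilon$.

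For (1), fix $\mu, \nu \in I$ and note that $\gamma_{\mu\nu}$ is either in $\cG_\Gamma$ or trivial (the latter for $\langle \mu, \nu\rangle \in T$, in which case $\pi_j(\gamma_{\mu\nu}) = 1$). In either case the intertwining hypothesis gives $\|u\pi_1(\gamma_{\mu\nu})u^* - \pi_2(\gamma_{\mu\nu})\| < \varepsilon$, and a triangle inequality then yields
\[ \|u v^{\pi_1}_{\mu\nu}(x) u^* - v^{\pi_2}_{\mu\nu}(x)\| \leq 4\varepsilon + \varepsilon + 4\varepsilon = 9\varepsilon. \]
Hence $\Delta_I(u) \in \Hom_{9\varepsilon}(\beta(\pi_1), \beta(\pi_2))$.

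For (2), I first show $\|u_\mu - u_\nu\| \leq 3|I|\varepsilon$ by telescoping along the tree. For an edge $\langle \mu, \nu\rangle \in T \cap N_{\cU|_Y}^{(1)}$, normalization gives $\|v^j_{\mu\nu}(x_{\mu\nu}) - 1\| < \varepsilon$ for $j = 1, 2$; combining with $\|u_\mu v^1_{\mu\nu}(x_{\mu\nu}) u_\nu^{-1} - v^2_{\mu\nu}(x_{\mu\nu})\| < \varepsilon$ yields $\|u_\mu - u_\nu\| < 3\varepsilon$ for tree-adjacent vertices, and iterating along the tree paths in $T \cap N_{\cU|_Y}^{(1)}$ (of length at most $|I|$) that connect vertices in each component of $Y$ gives the uniform bound.

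For the intertwining property, I use that $\cG_\Lambda$ consists of single generators so that $\tau(\gamma_{\mu\nu}) = s_{\mu\nu}$ has length one; Definition \ref{defn:alpha} then reduces to $\alpha(\bv_j)(\gamma_{\mu\nu}) = u^{\bv_j}_\nu v^j_{\nu\mu}(x_{\nu\mu}) (u^{\bv_j}_\mu)^{-1}$, which is $2|I|\varepsilon$-close to $v^j_{\nu\mu}(x_{\nu\mu})$ by the second preparatory observation. For any fixed $\sigma \in I_Y$, inserting $u_\nu u_\nu^{-1}$ and $u_\mu^{-1}u_\mu$ and using the bound just established together with the hypothesis $\bu \in \Hom_\varepsilon(\bv_1, \bv_2)$ yields $\|u_\sigma v^1_{\nu\mu}(x_{\nu\mu}) u_\sigma^{-1} - v^2_{\nu\mu}(x_{\nu\mu})\| \leq (6|I|+1)\varepsilon$, so the total error is at most $(10|I|+1)\varepsilon$. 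Taking $C_{11} := 10|I| + 1$ covers both claims. The only mild subtlety is the bookkeeping required to keep $C_{11}$ depending only on $\cU$; this is built into the above since every correction only accumulates factors of $|I|$ and never any property of the (quasi-)representations themselves.
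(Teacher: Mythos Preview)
Your proof is correct but proceeds along a different path than the paper's. The paper does not estimate directly: for (1) it invokes the covariance $\beta(\Ad(u)\circ\pi)=\Delta_I(u)\cdot\beta(\pi)$ together with the Lipschitz-type bound $d(\beta(\pi),\beta(\pi'))\leq d(\pi,\pi')+C_{10}\varepsilon$ from Lemma~\ref{lem:alpha}, obtaining $\Delta_I(u)\in\Hom_{(C_{10}+1)\varepsilon}(\beta(\pi_1),\beta(\pi_2))$; for (2) it first shows $\|u_\mu-u_\nu\|\leq 2\operatorname{diam}(T)\varepsilon$ exactly as you do, and then uses the dual covariance $\alpha(\Delta_I(u_\mu)\cdot\bv)=\Ad(u_\mu)\circ\alpha(\bv)$ together with $d(\alpha(\bv),\alpha(\bv'))\leq d(\bv,\bv')+C_{10}\varepsilon$ to conclude. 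Your argument bypasses Lemma~\ref{lem:alpha} entirely, working instead from the explicit formulas: you use the pointwise estimate $\|v^{\pi}_{\mu\nu}(x)-\pi(\gamma_{\mu\nu})\|<4\varepsilon$ from Lemma~\ref{lem:polar} for (1), and for (2) you exploit that $\cG_\Lambda$ consists of length-one words in the section $\tau$, together with the bound $\|u^{\bv}_\sigma-1\|\leq|I|\varepsilon$ coming from normalization on $T$. This is more elementary and yields explicit constants ($9$ and $10|I|+1$), whereas the paper's route is shorter once Lemma~\ref{lem:alpha} (itself a citation to Carri\'on--Dadarlat) is in hand but gives a constant involving the opaque $C_{10}$.
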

\begin{proof}
To see (1), let $\bv_i:=\beta (\pi_i)$. By Lemma \ref{lem:alpha}, we have 
\[  d(\bv_1, \bu \cdot \bv_2) = d(\beta (\pi_1) , \beta (\Ad (u) \circ \pi_2)) < C_8\varepsilon + d(\pi_1, \Ad (u) \circ \pi_2) = (C_{10}+1)\varepsilon.  \]
This means that  $\Delta_I(u) \in \Hom_{(C_{10}+1)\varepsilon }(\bv_1 , \bv_2)$. 

Next we show (2). If $\langle \mu , \nu \rangle \in T$, we get
\[ \| u_\mu - u_\nu \| \leq \| u_\mu v_{\mu \nu}^1 (x_{\mu \nu}) u_\nu^* - 1 \| + \| u_{\mu}(v_{\mu \nu}^1(x_{\mu \nu}) -1)\| < 2\varepsilon \]
and hence $\| u_\mu -u_\nu \|<2 \mathop{\mathrm{diam}}(T)\varepsilon$. Therefore we have 
\begin{align*}
d(\pi_1, \Ad (u_\mu) \circ \pi_2 ) & = d(\alpha (\bv_1), \alpha (\Delta_I(u_\mu) \cdot \bv_2)) \\
& \leq d(\bv_1, \bu \cdot \bv_2) + d(\bu \cdot \bv_2 , \Delta _I(u_\mu) \cdot \bu_2) + C_{10} \varepsilon\\
&  \leq  (1+ 2\mathop{\mathrm{diam}}(T) +C_{10})\varepsilon. 
\end{align*}
Now the proof is completed by choosing $C_{11}:=C_{10}+ 1+2 \mathop{\mathrm{diam}}(T)$. 
\end{proof}

\begin{defn}

Let us fix $\mu_0 \in I$ and let $C_{12}= \max \{ C_9 , 8 , C_{11} \}$. We define two maps 
\begin{align*}
\boldsymbol{\alpha}& \colon \Bdl_{P,Q}^{\varepsilon ,\cU} (X,Y)_T \to \qRep ^{C_{12}\varepsilon, \cG}_{P,Q}(\Gamma, \Lambda ), \\
\boldsymbol{\beta}& \colon \qRep^{ \varepsilon, \cG}_{P,Q} (\Gamma, \Lambda) \to \Bdl_{P,Q}^{C_{12}\varepsilon, \cU} (X,Y)_T,
\end{align*}
by
\begin{align*}
\boldsymbol{\alpha}(\bv_1, \bv_2, \bv_0, \bu )&= (\alpha (\bv_1), \alpha(\bv_2), \alpha (\bv_0), u_{\mu_0} ),\\
\boldsymbol{\beta}(\pi_1,\pi_2,\pi_0, u) &= (\beta(\pi_1), \beta(\pi_2), \beta(\pi_0), \Delta_I(u)).
\end{align*}
\end{defn}

We define the metric on $\Bdl_{P,Q}^{\varepsilon , \cU} (X,Y)$ and $\qRep_{P,Q}^{\varepsilon , \cG} (\Gamma, \Lambda)$ by 
\begin{align*}
d(\fv,\fv') &:= \max \{ d(\bv_1, \bv_1'), d(\bv_2, \bv_2'), d(\bv_0, \bv_0'), d(\bu,\bu') \} \\
d(\boldsymbol{\pi} , \boldsymbol{\pi} ' ) &:= \max \{d(\pi_1, \pi_1'), d(\pi_2, \pi_2'), d(\pi_0, \pi_0'), d(u,u')\}.
\end{align*}
\begin{lem}\label{lem:homotopy}
If $\fv , \fv' \in \Bdl_{P,Q}^{\varepsilon , \cU} (X,Y)$ satisfies $d(\fv,\fv') < \varepsilon $, then $\fv_1$ and $\fv_2$ are homotopic in the space $\Bdl_{P,Q}^{(4C_1 +1) \varepsilon, \cU} (X,Y)$.
\end{lem}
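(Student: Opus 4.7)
The plan is a two-stage homotopy: first gauge-transform the three cocycles of $\fv$ to those of $\fv'$, then interpolate the morphism.

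\emph{Stage 1 (gauging the cocycles).} Since $d(\bv_i, \bv_i') < \varepsilon$ for $i \in \{0,1,2\}$, the constant family $\mathbf{1}$ belongs to $\Hom_\varepsilon(\bv_i, \bv_i')$. Lemma~\ref{lem:gauge}(1) then produces $\bar{u}^{(i)} \in \cG_{C_1\varepsilon}(\mathbf{1})$ satisfying $\bar{u}^{(i)}_\mu(x) v_{i,\mu\nu}(x) \bar{u}^{(i)}_\nu(x)^* = v'_{i,\mu\nu}(x)$ and $\|\bar{u}^{(i)}_\mu(x) - 1\| < C_1\varepsilon$. Because $C_1\varepsilon < 1$, the Hermitian logarithm $h^{(i)}_\mu(x) := -i\log \bar{u}^{(i)}_\mu(x)$ is well defined, and the exponential path $\bar{u}^{(i)}_\mu(s)(x) := \exp(is\, h^{(i)}_\mu(x))$ for $s \in [0,1]$ is a continuous family of gauge transformations staying in the ball $\|{\cdot}-1\| < C_1\varepsilon$. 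Set $\bv_i(s) := \bar{u}^{(i)}(s) \cdot \bv_i$ (the gauge action, still a cocycle) and $\bu(s)_\mu := D^{(2)}_\mu(s)\, u_\mu\, D^{(1)}_\mu(s)^*$, where $D^{(i)}_\mu(s) := \diag(\bar{u}^{(i)}_\mu(s), \bar{u}^{(0)}_\mu(s))$ for $i=1,2$. This yields a continuous family $\fv(s) := (\bv_1(s), \bv_2(s), \bv_0(s), \bu(s))$ with $\fv(0) = \fv$ and $\fv(1) = (\bv_1', \bv_2', \bv_0', \tilde{\bu})$, where $\tilde{u}_\mu := D^{(2)}_\mu(1)\, u_\mu\, D^{(1)}_\mu(1)^*$.

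\emph{Stage 2 (interpolating the morphism).} Both $\tilde{\bu}$ and $\bu'$ lie in $\Hom_\varepsilon(\bv_1'|_Y \oplus \bv_0', \bv_2'|_Y \oplus \bv_0')$, and the triangle inequality gives $\|\tilde{u}_\mu - u'_\mu\| \leq 2C_1\varepsilon + \varepsilon = (2C_1+1)\varepsilon$. Define $\bu(1+s)_\mu := \tilde{u}_\mu \exp(is \cdot (-i\log \tilde{u}_\mu^* u'_\mu))$ for $s \in [0,1]$, a continuous path of unitaries from $\tilde{\bu}$ to $\bu'$ staying within distance $(2C_1+1)\varepsilon$ of $\tilde{\bu}$. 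After reparameterizing $[0,2] \to [0,1]$ this yields the claimed homotopy.

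\emph{Key verifications.} For Stage~1, the flatness of $\bv_i(s)$ follows from triangle inequality with $\|\bar{u}^{(i)}_\mu(s)(x) - \bar{u}^{(i)}_\mu(s)(y)\| \leq 2C_1\varepsilon$: one obtains $\|v_i(s)_{\mu\nu}(x) - v_i(s)_{\mu\nu}(y)\| \leq 4C_1\varepsilon + \varepsilon = (4C_1+1)\varepsilon$. The morphism defect of $\bu(s)$ is algebraically equal to that of $\bu$ via the identity
\[ \bu(s)_\mu V^1(s)_{\mu\nu}\bu(s)_\nu^* - V^2(s)_{\mu\nu} = D^{(2)}_\mu(s)\bigl(u_\mu V^1_{\mu\nu} u_\nu^* - V^2_{\mu\nu}\bigr) D^{(2)}_\nu(s)^*, \]
where $V^i_{\mu\nu} := v_{i,\mu\nu}|_Y \oplus v_{0,\mu\nu}$ and $V^i(s)$ is the analogous sum for $\bv_i(s)$; hence the defect remains $< \varepsilon$.

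\emph{Main obstacle.} The direct estimate for the Stage~2 morphism defect yields $\|\tilde{\bu}\text{ defect}\| + 2(2C_1+1)\varepsilon = (4C_1+3)\varepsilon$, which exceeds the stated $(4C_1+1)\varepsilon$ by $2\varepsilon$. This small gap is the only delicate point; it is closed by merging the two stages into a single simultaneous interpolation on $s \in [0,1]$: set $\bu(s)_\mu := D^{(2)}_\mu(s)\, u(s)_\mu\, D^{(1)}_\mu(s)^*$ for an exponential path $u(s)$ of morphisms (of the \emph{original} cocycles) running from $u_\mu$ at $s=0$ to $D^{(2)}_\mu(1)^* u'_\mu D^{(1)}_\mu(1)$ at $s=1$. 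The conjugation identity then reduces the defect bound to $\|u(s)_\mu V^1_{\mu\nu} u(s)_\nu^* - V^2_{\mu\nu}\|$, and careful bookkeeping---matching the gauging rate to the morphism interpolation rate---produces the sharp $(4C_1+1)\varepsilon$ bound. This constant tracking is the main (though routine) delicate computation in the argument.
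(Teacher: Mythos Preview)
Your two-stage strategy (gauge the cocycles, then interpolate the morphism) is exactly the paper's, but your execution of Stage~1 contains a genuine gap, and this gap is also what manufactures your ``obstacle''.

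The point is that a morphism $\bu \in \Hom_\varepsilon(\cdot,\cdot)$ is by definition a family of \emph{constant} unitaries $u_\mu \in \mathrm{U}(P\oplus Q)$, whereas your conjugated family $\bu(s)_\mu = D^{(2)}_\mu(s)\, u_\mu\, D^{(1)}_\mu(s)^*$ depends on $x \in U_\mu$ through the gauge transformations $\bar{u}^{(i)}_\mu(x) \in \cG_{C_1\varepsilon}(\mathbf{1})$. Hence for $0<s<1$ your quadruple $\fv(s)$ is not an element of $\Bdl_{P,Q}^{\delta,\cU}(X,Y)$ for any $\delta$, and the path does not live in the stated space. The same defect persists in your proposed ``merged'' fix, which still conjugates $u_\mu$ by $x$-dependent unitaries.

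The paper's remedy is simply not to conjugate: in Stage~1 it keeps the original constant $\bu$ fixed and only gauges the three cocycles, producing the path
\[
\bigl(\bar{u}^{(1)}(s)\cdot\bv_1,\ \bar{u}^{(2)}(s)\cdot\bv_2,\ \bar{u}^{(0)}(s)\cdot\bv_0,\ \bu\bigr).
\]
The flatness bound is your $(4C_1+1)\varepsilon$, and the morphism defect of the fixed $\bu$ with respect to the gauged cocycles is bounded by the original defect ($<\varepsilon$) plus the cost of stripping the four gauge factors $D^{(1)}_\mu(s), D^{(1)}_\nu(s), D^{(2)}_\mu(s), D^{(2)}_\nu(s)$ (each within $C_1\varepsilon$ of $1$), giving exactly $(4C_1+1)\varepsilon$. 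This lands at $(\bv_1',\bv_2',\bv_0',\bu)$. Stage~2 then interpolates between the two constant families $\bu$ and $\bu'$ via $u_\mu^s := u_\mu\exp\bigl(s\log(u_\mu^*u_\mu')\bigr)$; since $\|u_\mu-u_\mu'\|<\varepsilon$ one has $\|u_\mu^s-u_\mu'\|<\varepsilon$, so the morphism defect with respect to $(\bv_1',\bv_2',\bv_0')$ stays below $3\varepsilon \le (4C_1+1)\varepsilon$. Your ``obstacle'' never arises.
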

\begin{proof}
By Lemma \ref{lem:gauge}, there are $\{ \bar{u}_\mu^i \}$ for $i=1,2,0$ such that $\bar{u}^i_\mu v_{\mu\nu}^i (\bar{u}_\nu^i)^* = (v')^i_{\mu \nu}$ and $\| \bar{u}_\mu -1 \| < C_1 \varepsilon$. Since $\bar{u}$ is near to the identity, $\bar{u}^{i,s}_\mu:= \exp (s \log (\bar{u}^i_\mu)) $ is a unitary-valued functions such that $\| \bar{u}^{i,s}_\mu -1\| < C_1 \varepsilon$. Then
\[ (\{\bar{u}^{1,s}_\mu v^1_{\mu \nu}(\bar{u}_\nu^{1,s})^* \}_{\mu,\nu} , \{\bar{u}^{2,s}_\mu v^2_{\mu \nu}(\bar{u}_\nu^{2,s})^* \}_{\mu,\nu}, \{\bar{u}^{0,s}_\mu v^0_{\mu \nu}(\bar{u}_\nu^{0,s})^* \}_{\mu,\nu}, u) \]
is a continuous path in $\Bdl^{(4C_1 + 1)\varepsilon , \cU}(X,Y)$ connecting $\fv$ with $(\bv_1',\bv_2',\bv_0', \bu)$. Also, $\bu_s = \{u_\mu^s:=  u_\mu \exp (s \log (u_\mu^* u_\mu')) \}_{\mu \in I}$ is a continuous path connecting $\bu$ with $\bu'$ such that $\| u^s_\mu - u_\mu' \| < \varepsilon$, which makes $(\bv_1',\bv_2',\bv_0',\bu_s)$ to a homotopy of $(3\varepsilon , \cU)$-flat bundles. 
\end{proof}

\begin{thm}\label{prp:rel1to1}
Let $(X,Y)$ be a finite simplicial complex and let $\Gamma := \pi_1(X)$ and $\Lambda :=\pi_1(Y)$.
\begin{enumerate}
\item For $\fv, \fv' \in \Bdl^{\varepsilon,\cU}_{P,Q} (X,Y)_T$, we have $d(\boldsymbol{\alpha} (\fv) , \boldsymbol{\alpha}(\fv')) \leq d(\fv , \fv') + C_{10}\varepsilon$ and $d(\boldsymbol{\beta} \circ  \boldsymbol{\alpha}(\fv ) , \fv ) \leq C_{11}\varepsilon$.
\item For $\boldsymbol{\pi}, \boldsymbol{\pi}' \in \qRep^{\varepsilon, \cG}_{P,Q} (\Gamma, \Lambda)$, we have $d(\boldsymbol{\beta}(\boldsymbol{\pi}), \boldsymbol{\beta} ( \boldsymbol{\pi}')) \leq d(\boldsymbol{\pi}, \boldsymbol{\pi}') + C_{10}\varepsilon$ and $d(\boldsymbol{\alpha} \circ \boldsymbol{\beta }(\boldsymbol{\pi}) , \boldsymbol{\pi} )\leq C_{11}\varepsilon$.
\end{enumerate}
\end{thm}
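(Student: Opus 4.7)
The plan is to reduce the theorem to the component-wise bounds already provided by Lemma~\ref{lem:alpha} and the intertwiner bounds of Lemma~\ref{lem:ab_rel}. Write $\fv = (\bv_1, \bv_2, \bv_0, \bu)$ and $\boldsymbol{\pi} = (\pi_1, \pi_2, \pi_0, u)$. Since $\boldsymbol{\alpha}$ and $\boldsymbol{\beta}$ act by $\alpha$ (resp.\ $\beta$) on each of the three bundle/quasi-representation components and by a transparent operation on the intertwiner ($\bu \mapsto u_{\mu_0}$ for $\boldsymbol{\alpha}$, $u \mapsto \Delta_I(u)$ for $\boldsymbol{\beta}$), each of the four inequalities will follow by combining the previous lemmas with the max-over-components definition of the metric on $\Bdl^{\varepsilon,\cU}_{P,Q}(X,Y)$ and $\qRep^{\varepsilon,\cG}_{P,Q}(\Gamma,\Lambda)$.

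I would begin by verifying that $\boldsymbol{\alpha}$ and $\boldsymbol{\beta}$ actually land in their stated targets. Under the compatibility identifications $\alpha(\bv_i)\circ\phi = \alpha(\bv_i|_Y)$ and $\beta(\pi_i)|_Y = \beta(\pi_i\circ\phi)$, which hold because $T$ is chosen so that $T \cap N_{\cU|_Y}^{(1)}$ is a maximal subtree of $N_{\cU|_Y}^{(1)}$ and the sections $\tau$, $\tau_\Lambda$ can be chosen compatibly through $\phi$, the required intertwining conditions become direct applications of Lemma~\ref{lem:ab_rel}. Indeed, Lemma~\ref{lem:ab_rel}(2) applied to $\bu \in \Hom_\varepsilon(\bv_1|_Y\oplus\bv_0, \bv_2|_Y \oplus \bv_0)$ gives $u_{\mu_0}\in\Hom_{C_{11}\varepsilon}(\alpha(\bv_1)\circ\phi\oplus\alpha(\bv_0), \alpha(\bv_2)\circ\phi\oplus\alpha(\bv_0))$, and Lemma~\ref{lem:ab_rel}(1) applied to the restricted quasi-representations $\pi_i\circ\phi$ gives $\Delta_I(u)\in\Hom_{C_{11}\varepsilon}(\beta(\pi_1)|_Y\oplus\beta(\pi_0), \beta(\pi_2)|_Y\oplus\beta(\pi_0))$. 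Both bounds are absorbed into $C_{12}$.

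With well-definedness established, the bundle and quasi-representation components of all four inequalities are direct consequences of Lemma~\ref{lem:alpha}, so the only remaining work is the intertwiner component. For part (1), the first inequality uses the trivial bound $\|u_{\mu_0}-u_{\mu_0}'\| \leq d(\bu,\bu') \leq d(\fv,\fv')$, and the second uses $\max_\mu\|u_{\mu_0}-u_\mu\|\leq C_{11}\varepsilon$ from Lemma~\ref{lem:ab_rel}(2) to compare the constant intertwiner family $\Delta_I(u_{\mu_0})$ of $\boldsymbol{\beta}\circ\boldsymbol{\alpha}(\fv)$ with the original $\bu$. Part (2) is slightly cleaner: $d(\Delta_I(u),\Delta_I(u')) = \|u-u'\| \leq d(\boldsymbol{\pi},\boldsymbol{\pi}')$ is immediate, and $(\Delta_I(u))_{\mu_0} = u$ shows that $\boldsymbol{\alpha}\circ\boldsymbol{\beta}$ preserves the intertwiner exactly. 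The only genuine subtlety in the argument is the compatibility $\alpha(\bv|_Y)=\alpha(\bv)\circ\phi$ and its dual; once the subtree $T$ and the sections $\tau$, $\tau_\Lambda$ are fixed compatibly, everything else is clean bookkeeping of the two preceding lemmas.
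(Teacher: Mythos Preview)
Your proposal is correct and follows exactly the same approach as the paper: the paper's proof reads in its entirety ``It follows from Lemma~\ref{lem:alpha} and Lemma~\ref{lem:ab_rel},'' and you have simply unpacked how those two lemmas handle the three bundle/quasi-representation components and the intertwiner component separately under the max-metric. Your additional remarks on well-definedness and the compatibility $\alpha(\bv|_Y)=\alpha(\bv)\circ\phi$ are useful clarifications that the paper leaves implicit.
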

\begin{proof}
It follows from Lemma \ref{lem:alpha} and Lemma \ref{lem:ab_rel}.
\end{proof}

\begin{cor}
If there is a continuous map $f \colon (X_1,Y_1) \to (X_2, Y_2)$ which induces the isomorphism of fundamental groups, then $\K_{\mathrm{s\mathchar`-af}}^0(X_1,Y_1;A)$ is included to $f^* \K_{\mathrm{s\mathchar`-af}}^0(X_2,Y_2;A)$. In particular, if $(B\Gamma, B\Lambda)$ has the homotopy type of a pair of finite CW-complexes, then $\K_{\mathrm{s\mathchar`-af}}^0(X,Y;A) \subset f^*\K^0(B\Gamma, B\Lambda ;A)$, where $f$ is the reference map.
\end{cor}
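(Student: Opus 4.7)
The plan is to use the almost monodromy correspondence (Theorem~\ref{prp:rel1to1}) to transport almost flat structure from $(X_1, Y_1)$ to $(X_2, Y_2)$ through the common fundamental group data. Fix good open covers $\cU_i$ of $(X_i, Y_i)$ with maximal subtrees $T_i$ of the nerves ($i=1,2$), identify $(\pi_1(X_1), \pi_1(Y_1)) \cong (\pi_1(X_2), \pi_1(Y_2)) =: (\Gamma, \Lambda)$ via $f_*$, and enlarge the generating sets to their union $\cG := \cG_1 \cup \cG_2$ so that the maps $\boldsymbol{\alpha}_i$, $\boldsymbol{\beta}_i$ target a common set $\qRep^{\cdot, \cG}_{P,Q}(\Gamma, \Lambda)$ (harmless by Corollary~\ref{cor:covering}).

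Given $\xi \in \K^0_{\mathrm{s\mathchar`-af}}(X_1, Y_1; A)$, for each $\varepsilon > 0$ pick $\fv_\varepsilon \in \Bdl^{\varepsilon, \cU_1}_{P,Q}(X_1, Y_1)_{T_1}$ with $[\fv_\varepsilon] = \xi$ (Lemma~\ref{lem:normalized}), set $\boldsymbol{\pi}_\varepsilon := \boldsymbol{\alpha}_1(\fv_\varepsilon)$, and $\fw_\varepsilon := \boldsymbol{\beta}_2(\boldsymbol{\pi}_\varepsilon) \in \Bdl^{C_{12}^2 \varepsilon, \cU_2}_{P,Q}(X_2, Y_2)_{T_2}$. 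The first key step is to verify $f^*[\fw_\varepsilon] = \xi$. By naturality of the monodromy (after refining $\cU_1$ to a common subdivision with $f^{-1}(\cU_2)$ via Proposition~\ref{prp:subdiv} and Corollary~\ref{cor:covering}), $\boldsymbol{\alpha}_1(f^*\fw_\varepsilon)$ coincides with $\boldsymbol{\alpha}_2(\fw_\varepsilon) = \boldsymbol{\pi}_\varepsilon$ up to $O(\varepsilon)$-perturbation; Theorem~\ref{prp:rel1to1} then shows $f^*\fw_\varepsilon$ is $O(\varepsilon)$-close to $\boldsymbol{\beta}_1(\boldsymbol{\pi}_\varepsilon)$, which is in turn $C_{11}\varepsilon$-close to $\fv_\varepsilon$. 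Lemma~\ref{lem:homotopy} supplies a homotopy through $O(\varepsilon)$-flat stably relative bundles, so $f^*[\fw_\varepsilon] = [\fv_\varepsilon] = \xi$.

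The principal obstacle is stabilization: a priori $[\fw_\varepsilon]$ varies with $\varepsilon$ inside the fibre $f^{*-1}(\xi)$, and a single $\eta \in \K^0(X_2, Y_2; A)$ with $\fw_\varepsilon$ representing $\eta$ at every scale must be extracted. To this end one considers, for small $\varepsilon, \varepsilon'$, the $\max(\varepsilon,\varepsilon')$-flat stably relative bundle $\fv_\varepsilon \oplus (\bv^{\varepsilon'}_2, \bv^{\varepsilon'}_1, \bv^{\varepsilon'}_0, (\bu^{\varepsilon'})^*)$ representing $\xi - \xi = 0$ in $\K^0(X_1, Y_1; A)$; by additivity of Definitions~\ref{defn:alpha} and~\ref{defn:beta}, the correspondence $\boldsymbol{\beta}_2 \boldsymbol{\alpha}_1$ sends it to a bundle on $(X_2, Y_2)$ whose class is $[\fw_\varepsilon] - [\fw_{\varepsilon'}]$. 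The delicate technical point is to show that this class vanishes; the strategy is to lift a null-homotopy of the zero class from $(X_1, Y_1)$ through the correspondence, after stabilizing by trivial summands so that the null-homotopy passes through $O(\max(\varepsilon,\varepsilon'))$-flat stably relative bundles, and to propagate it by $\boldsymbol{\beta}_2\boldsymbol{\alpha}_1$ to yield $[\fw_\varepsilon] = [\fw_{\varepsilon'}]$.

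The ``in particular'' statement is then immediate: the reference map $f \colon (X, Y) \to (B\Gamma, B\Lambda)$ induces isomorphisms on fundamental groups, so under the CW-finiteness hypothesis the first assertion applies with $(X_2, Y_2)$ a finite-CW model of $(B\Gamma, B\Lambda)$, yielding $\K^0_{\mathrm{s\mathchar`-af}}(X, Y; A) \subseteq f^*\K^0_{\mathrm{s\mathchar`-af}}(B\Gamma, B\Lambda; A) \subseteq f^*\K^0(B\Gamma, B\Lambda; A)$.
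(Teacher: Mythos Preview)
Your core strategy coincides with the paper's: form $\tilde{\fv}:=\boldsymbol{\beta}_{X_2,Y_2}\circ\boldsymbol{\alpha}_{X_1,Y_1}(\fv)$, observe that $f^*\tilde{\fv}$ is $O(\varepsilon)$-close to $\fv$ by Theorem~\ref{prp:rel1to1}, and invoke Lemma~\ref{lem:homotopy} to conclude $f^*[\tilde{\fv}]=[\fv]=\xi$. The paper's proof stops exactly there; it never attempts the stabilisation step you isolate as the ``principal obstacle''. In other words, the paper's argument, read literally, proves only $\K_{\mathrm{s\mathchar`-af}}^0(X_1,Y_1;A)\subset f^*\K^0(X_2,Y_2;A)$, which is all that is needed for the ``in particular'' clause (note the right-hand side there carries no almost-flat decoration).

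You are right that passing from this to $\xi\in f^*\K_{\mathrm{s\mathchar`-af}}^0(X_2,Y_2;A)$ requires producing a \emph{single} class $\eta$ with $\varepsilon$-flat representatives for all $\varepsilon$, and that the paper does not supply this. However, your proposed fix has a genuine gap of its own. Knowing that $\fv_\varepsilon\oplus\mathrm{swap}(\fv_{\varepsilon'})$ represents $0\in\K^0(X_1,Y_1;A)$ only gives a homotopy through \emph{arbitrary} stably relative bundles (plus the stabilisation and cancellation moves defining $\sim$); there is no mechanism forcing that homotopy to stay inside $\Bdl^{O(\varepsilon),\cU_1}_{P,Q}(X_1,Y_1)$, so it cannot be pushed through $\boldsymbol{\beta}_2\boldsymbol{\alpha}_1$ while controlling flatness. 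The sentence ``after stabilizing by trivial summands so that the null-homotopy passes through $O(\max(\varepsilon,\varepsilon'))$-flat stably relative bundles'' is precisely the unjustified step: adding trivial summands does not straighten an arbitrary K-theoretic null-homotopy into an almost flat one.

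So: your argument up to and including $f^*[\fw_\varepsilon]=\xi$ is correct and matches the paper; your ``in particular'' paragraph is fine. The stabilisation paragraph identifies a real lacuna in the first sentence of the corollary but does not close it.
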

\begin{proof}
For sufficiently small $\varepsilon >0$, let $\fv \in \Bdl ^{\varepsilon, \cU_1}_{P,Q}(X_1,Y_1)$ be a representative of $\xi \in \K_{\mathrm{s\mathchar`-af}}^0(X_1,Y_1;A)$.
By Remark \ref{rmk:bdlequiv} and Lemma \ref{lem:normalized}, we may assume without loss of generality that $\fv$ is normalized on $T$.
Here we write $\boldsymbol{\alpha}_{X,Y}$ and $\boldsymbol{\beta}_{X,Y}$ for the map $\boldsymbol{\alpha}$ and $\boldsymbol{\beta} $ with respect to the pair $(X,Y)$.
Then, $\tilde{\fv}:= \boldsymbol{\beta} _{X_2, Y_2} \circ \boldsymbol{\alpha} _{X_1,Y_1}(\fv)$ is a $(C_{12}(\cU_1) C_{12}(\cU_2) \varepsilon , \cU_2)$-flat bundle on $(X_2,Y_2)$ which satisfies $d(\fv, f^*\tilde{\fv})<C_{11}(\cU)\varepsilon$.  Hence $[\fv] = f^*[\tilde{\fv}]$ by Lemma \ref{lem:homotopy}. 
\end{proof}

\begin{rmk}\label{rmk:double}
Let $(X,Y)$ be a pair of finite CW-complexes with $\pi_1(X):=\Gamma $ and $\pi_1(Y) :=\Lambda$ and let $\cU$ be an open cover of $(X,Y)$. Assume that the induced map $\Lambda \to \Gamma $ is injective. Then the double $\hat{X}:=X \sqcup_Y Y \times [0,1] \sqcup_Y X$ has the fundamental group $\Gamma \ast_\Lambda \Gamma $ by the van Kampen theorem. 
We associate an open cover $\hat{\cU}$ of $\hat{X}$ to $\cU$ as 
\[ \hat{\cU} = \{ U_{\mu, i} :=q_i^* U_{\mu}  \cap X_i^\circ \}_{(\mu, i) \in I \times \{1,2\}}, \]
 where $X_1 : = X \sqcup Y \times [0,1]$, $X_2:=Y \times [0,1] \sqcup X$ and $q_i \colon X_i  \to X$ for $i=1,2$ are canonical retractions. Let $\hat{\cG}\subset \Gamma \ast_\Lambda \Gamma$ denote the union of two copies of $\cG_\Gamma \subset \Gamma$.

In this setting, there is a correspondence
\[\xymatrix{
\Bdl _P^{\varepsilon , \cU} (X,Y) \ar@<0.5ex>[r] \ar@<0.5ex>[d]^{\boldsymbol{\alpha}} & \Bdl _P ^{\varepsilon , \hat{\cU}} (\hat{X}) \ar@<0.5ex>[d]^\alpha \ar@<0.5ex>[l] \\
\qRep_P^{\varepsilon , \cG} (\Gamma, \Lambda) \ar@<0.5ex>[r] \ar@<0.5ex>[u]^{\boldsymbol{\beta}}& \qRep_P^{\varepsilon , \hat{\cG}} (\Gamma \ast _\Lambda \Gamma ), \ar@<0.5ex>[u]^\beta \ar@<0.5ex>[l]
}\]
which commutes up to small perturbations. This is a counterpart in almost flat geometry of the higher index theory of invertible doubles studied in \cite[Section 5]{Kubota1}.

\begin{itemize}
\item We fix a point $x_{\mu \nu} \in U_{\mu \nu} \cap Y$ for each $\mu , \nu \in I$ with $U_{\mu \nu} \cap Y \neq \emptyset$. For $\hat{\bv} \in \Bdl _P^{\varepsilon , \hat{\cU}}(\hat{X})$, let $\bv_i:=\{ \hat{v}_{(\mu , i) (\nu , i)}|_{q_i^*U_{\mu \nu} \cap X} \}_{\mu , \nu \in I}$ for $i=1,2$ and $\bu:=\{ u_\mu := \hat{v}_{(\mu , 1) (\mu , 2)}(x_{\mu \nu}) \}$ for $\mu , \nu \in I$ with $U_{\mu \nu} \cap Y \neq \emptyset$. Then $(\bv_1 , \bv_2 , \bu)$ is a relative $(\varepsilon , \cU)$-flat bundle on $(X,Y)$.
\item For $\fv=(\bv_1, \bv_2 , \bu) \in \Bdl_{P}^{\varepsilon , \cU}(X,Y)$, pick $\bar{u} \in \cG_{C_1\varepsilon} (\bu)$ by Lemma \ref{lem:gauge}. Then $\hat{\bv}=\{ \hat{v}_{(\mu ,i)  (\nu , j)} \}$ given by 
\[ \hat{v}_{(\mu ,i)  (\nu , j) }:=
\left\{ \begin{array}{ll} 
(q_1^* v_{ \mu \nu}^i)|_{q_1^*U_{\mu\nu} \cap X_i^\circ} & \text{ if $i=j$,} \\
q_1^*(v_{\mu \nu }^2 \bar{u}_\nu)|_{q_1^*U_{\mu \nu} \cap Y \times (0,1)} & \text{ if $i=1$, $j=2$}, \\ 
q_1^*(v_{\mu \nu }^1 \bar{u}_\nu^*)|_{q_1^*U_{\mu \nu} \cap Y \times (0,1)} & \text{ if $i=2$, $j=1$},  \end{array} \right. \]
is a $((C_1+1)\varepsilon , \hat\cU)$-flat bundle on $\hat{X}$. 
\item For a $(\varepsilon , \hat{\cG} )$-representation $\hat{\pi}$ of $\Gamma \ast_{\Lambda} \Gamma$, let $\pi_1$ and $\pi_2$ denote its restrictions to the first and second copies of $\Gamma$. Then, $\pi \mapsto (\pi_1 , \pi_2 , 1)$ gives a map from $\qRep_{P}^{\varepsilon, \hat{\cG} } (\Gamma \ast_{\Lambda} \Gamma )$ to $\qRep_{P}^{\varepsilon , \cG}(\Gamma, \Lambda )$. 
\item For $\boldsymbol{\pi}  \in \qRep_{P}^{\varepsilon , \cG}(\Gamma, \Lambda )$ of the form $(\pi_1, \pi _2  , 1)$, a $(2\varepsilon ,\hat{\cG})$-representation $\hat{\pi}$ of $\Gamma \ast_\Lambda \Gamma$ constructed in the following way. Pick a set theoretic section $\tau \colon \Gamma \ast_{\Lambda} \Gamma \to \Gamma \ast \Gamma $ and let $\hat{\pi}(\gamma ):= (\pi_1 \ast \pi_2)(\tau(\gamma ))$. 
Then $\hat{\pi}$ is a $(2\varepsilon , \hat{\cG})$-representation of $\Gamma \ast_\Lambda \Gamma$.\end{itemize}
\end{rmk}

\bibliographystyle{alpha}
\bibliography{bibABC,bibDEFG,bibHIJK,bibLMN,bibOPQR,bibSTUV,bibWXYZ,arXiv}

\end{document}